\documentclass[11pt,a4paper,reqno]{amsart}
\usepackage[includehead,includefoot,margin=24mm]{geometry}
\usepackage{times}
\usepackage{amsmath} %
\usepackage{amssymb} %
\usepackage{amsthm} %
\usepackage{thmtools}
\usepackage{enumerate} %
\usepackage{enumitem}
\usepackage{quiver}
\usepackage{color} %
\xdefinecolor{tu-blue}{RGB}{19,83,138}
\usepackage{hyperref}
\hypersetup{
    colorlinks=true,       % false: boxed links; true: colored links
    linkcolor=tu-blue,          % color of internal links
    citecolor=tu-blue,        % color of links to bibliography
    filecolor=tu-blue,      % color of file links
    urlcolor=tu-blue           % color of external links
}

\usepackage[nameinlink]{cleveref} 

\newtheorem{introthm}{Theorem}

\newtheorem{theorem}{Theorem}[section] %
\newtheorem{corollary}[theorem]{Corollary} %
\newtheorem{lemma}[theorem]{Lemma} %
\newtheorem{proposition}[theorem]{Proposition} %
{\theoremstyle{remark} %
  \newtheorem{remark}[theorem]{Remark}} %
{\theoremstyle{definition} %
  \newtheorem{definition}[theorem]{Definition} %
  \newtheorem{example}[theorem]{Example} %
}

\crefname{theorem}{Theorem}{Theorems}
\crefname{proposition}{Proposition}{Propositions}
\crefname{lemma}{Lemma}{Lemmas}
\crefname{corollary}{Corollary}{Corollaries}
\crefname{definition}{Definition}{Definitions}
\crefname{remark}{Remark}{Remarks}
\crefname{example}{Example}{Examples}

\newcommand{\PP}[0]{\ensuremath{\mathbb{P}}}
\newcommand{\CC}[0]{\ensuremath{\mathbb{C}}}
\newcommand{\RR}[0]{\ensuremath{\mathbb{R}}}
\newcommand{\ZZ}[0]{\ensuremath{\mathbb{Z}}}
\newcommand{\NN}[0]{\ensuremath{\mathbb{N}}}
\newcommand{\FF}[0]{\ensuremath{\mathbb{F}}}

\newcommand{\tvarphi}[0]{\ensuremath{\widetilde{\varphi}}}

\newcommand\h{\operatorname{H}}

\newcommand{\Aut}[0]{\ensuremath{\operatorname{Aut}}}
\newcommand{\Gal}[0]{\ensuremath{\operatorname{Gal}}}
\newcommand{\Spec}[0]{\ensuremath{\operatorname{Spec}}}
\newcommand{\Lin}[0]{\ensuremath{\operatorname{Lin}}}
\newcommand{\diag}[0]{\ensuremath{\operatorname{diag}}}
\newcommand{\PGL}[0]{\ensuremath{\operatorname{PGL}}}
\newcommand{\GL}[0]{\ensuremath{\operatorname{GL}}}

\newcommand{\Id}[0]{\ensuremath{\operatorname{Id}}}

\newcommand{\GP}[0]{\ensuremath{\operatorname{GP}}}
\newcommand{\PGP}[0]{\ensuremath{\operatorname{PGP}}}
\newcommand{\GT}[0]{\ensuremath{\operatorname{GT}}}
\newcommand{\PGT}[0]{\ensuremath{\operatorname{PGT}}}
\newcommand{\PGU}[0]{\ensuremath{\operatorname{PGU}}}
\newcommand{\Vars}[0]{\ensuremath{\operatorname{Vars}}}
\newcommand{\Spar}[0]{\ensuremath{\operatorname{Spar}}}
\newcommand{\drank}[0]{\ensuremath{\operatorname{diff.rank}}}

\begin{document}

\title[Twisted forms of classical hypersurfaces]{Twisted forms of classical hypersurfaces}

\author{Alvaro Liendo} %
\address{Instituto de Matem\'atica y F\'isica, Universidad de Talca,
  Casilla 721, Talca, Chile} %
\email{aliendo@utalca.cl}

\author{Mat\'ias V\'asquez} %
\address{Instituto de Matem\'atica y F\'isica, Universidad de Talca,
  Casilla 721, Talca, Chile} %
\email{matias.vasquez@utalca.cl}

\date{\today}

\thanks{{\it 2020 Mathematics Subject
    Classification}: Primary 14J50; Secondary 14J70, 12G05, 14G15, 14P05.\\
  \mbox{\hspace{11pt}}{\it Key words}: Smooth hypersurfaces, automorphism groups, Galois cohomology, real forms, forms over finite fields.\\
  \mbox{\hspace{11pt}} Both authors were partially supported by ANID Fondecyt Exploraci\'on 13250049 and ANID Fondecyt Regular 1240101. The second author was also supported by ANID Beca de Magister Nacional 22260199}

\begin{abstract}
  We count the twisted forms, over the field of real numbers and over
  finite fields, of the three classical families of smooth hypersurfaces
  with large automorphism group: the Fermat, Delsarte and Klein
  hypersurfaces. Our main tool is a counting formula for the Galois
  cohomology set of a smooth hypersurface whose
  automorphism group is the semidirect product of a diagonal abelian group
  and a group of permutations of the monomials of its defining equation. In
  order to apply this formula over finite fields, we extend the
  differential method of Oguiso and Yu to positive
  characteristic, and we compute the automorphism groups of the Fermat,
  Delsarte and Klein hypersurfaces over algebraically closed fields of positive
  characteristic under explicit arithmetic conditions on $p$. Over
  the reals, our count recovers a recent theorem of Sasaki on the real
  forms of Fermat hypersurfaces.
\end{abstract}

\maketitle

\section*{Introduction}

Let $X\subset\PP^{n+1}$ be a smooth hypersurface of dimension
$n\geq2$ and degree $d\geq3$ with $(n,d)\neq(2,4)$ over an algebraically
closed field. A classical theorem of Matsumura and Monsky \cite{MM64}
asserts that the automorphism group $\Aut(X)$ is finite and that every
automorphism of $X$ is the restriction of a linear automorphism of the
ambient projective space; Benoist \cite{Ben13} refined this by showing that
the automorphism group scheme is reduced in every characteristic. A
general hypersurface has trivial automorphism group, but special
hypersurfaces can have large automorphism groups, see for instance
\cite{GL11,GL13,OY19,GALM22,Zhe22,MR4771229} and the references therein.
With finitely many exceptions, the smooth hypersurface of dimension $n$ and
degree $d$ with the largest automorphism group is the Fermat hypersurface
\cite{esserlargeautomorphismgroups,yang2025automorphismgroupssmoothhypersurfaces},
while the Klein hypersurface is, up to isomorphism, the unique one
admitting an automorphism of the largest possible prime order \cite{GL13}.

The results above concern hypersurfaces over an algebraically closed
field. Over a non-closed field the natural counterpart is the study of
forms, this is, given a Galois extension $K/k$ and a variety $X$ over $K$, a
$k$-form of $X$ is a variety $X'$ over $k$ such that
$X'\times_{\Spec(k)}\Spec(K)$ is $K$-isomorphic to $X$. By Weil descent,
the set of $k$-forms of $X$ up to $k$-isomorphism is classified by the
Galois cohomology set $H^1(\Gal(K/k),\Aut_K(X))$ \cite{BoSe64,Se}. Real
forms, i.e., the case $k=\RR$ and $K=\CC$, have been studied
recently: there exist smooth projective varieties, and
even projective rational surfaces, with infinitely many non-isomorphic real
forms \cite{Lesieutre,DiOg}, and rational surfaces with uncountably many
\cite{Anna}; in the other direction, several finiteness results are known
\cite{Benzerga,Cattaneo}. For a smooth hypersurface as above the
automorphism group is finite, so the number of its forms is finite, and it
becomes a natural problem to compute this number exactly. For Fermat
hypersurfaces over the reals this was recently achieved by Sasaki
\cite{Sasaki}.

In this paper we count the $k$-forms of the three classical families
of hypersurfaces with large symmetry, namely the Fermat, Delsarte and Klein
hypersurfaces $F^n_d=V(\mathcal F)$, $T^n_d=V(\mathcal T)$ and
$K^n_d=V(\mathcal K)$ given by
$$\mathcal F=\sum_{i=0}^{n+1}x_i^{d},\qquad
\mathcal T=\sum_{i=0}^{n}x_i^{d-1}x_{i+1}+x_{n+1}^d,\qquad
\mathcal K=\sum_{i\in\ZZ/(n+2)\ZZ}x_i^{d-1}x_{i+1},$$
over the fields $k=\RR$ and $k=\FF_q$ with $q$ odd. What these families
have in common is that their automorphism groups decompose as a semidirect
product $D\rtimes P$, where $D$ is an abelian group of diagonal
automorphisms and $P\leq S_{n+2}$ is the group of permutations of the
variables preserving the set of monomials of the defining equation
(\cref{cor:DrtimesP}). Our first main result is a counting formula for the
forms of any such hypersurface; here $\Gamma=\Gal(K/k)$, the twisted sets
$H^1(\Gamma,\ _cD)$ are computable since $D$ is abelian, and the key point
is that the section $P\to\Aut(X)$ given by permutation matrices is
$\Gamma$-equivariant.

\begin{introthm}[See \cref{thm:A}]\label{introthm:A}
Let $X=V(F)$ be a smooth hypersurface defined over $k$, all of whose
monomials have coefficient $1$, and such that $\Aut(X)\cong D\rtimes P$ as
above. Then
$$|H^1(\Gamma,\Aut(X))|=\sum_{[c]\in H^1(\Gamma,P)}
\big|H^1(\Gamma,\ _{c}D)/C_P(c)\big|,$$
where $C_P(c)$ denotes the centralizer of $c$ in $P$.
\end{introthm}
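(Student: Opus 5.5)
The plan is to use the standard exact-sequence technique of Serre \cite{Se} for nonabelian Galois cohomology, applied to the split short exact sequence of $\Gamma$-groups
\[
1\to D\to \Aut(X)\xrightarrow{\ \pi\ } P\to 1 .
\]
First I will check that $\Gamma$ acts compatibly on all three groups. Since all coefficients of $F$ equal $1$, every permutation matrix preserving the monomials of $F$ has entries in the prime field, so it is $\Gamma$-fixed. Hence $P$ carries the trivial $\Gamma$-action and the section $s\colon P\to\Aut(X)$ is $\Gamma$-equivariant. The subgroup $D$ of diagonal automorphisms is normal and $\Gamma$-stable, because conjugating the entries of a diagonal matrix by $\sigma$ keeps it diagonal and still preserves $F$.

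Next I will show that $\pi_*\colon H^1(\Gamma,\Aut(X))\to H^1(\Gamma,P)$ is surjective. Given a cocycle $c$ in $P$, the map $s\circ c$ is a cocycle in $\Aut(X)$ mapping to $c$, because $s$ is an equivariant homomorphism. Since $P$ has trivial action, $H^1(\Gamma,P)$ is the set of continuous homomorphisms $\Gamma\to P$ modulo conjugation. The total count is then the sum, over classes $[c]\in H^1(\Gamma,P)$, of the cardinalities of the fibres $\pi_*^{-1}([c])$.

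To compute a fibre, I twist by the lifted cocycle $\tilde c=s\circ c$. Serre's twisting bijection $H^1(\Gamma,\Aut(X))\to H^1(\Gamma,{}_{\tilde c}\Aut(X))$ is compatible with the twisting bijection for $P$, and it sends $\pi_*^{-1}([c])$ to the fibre over the trivial class of ${}_cP$. By \cite[I.5.5, Cor.~2]{Se}, that fibre is the quotient of $H^1(\Gamma,{}_{\tilde c}D)$ by the natural action of $H^0(\Gamma,{}_cP)$. Here the twisted group ${}_{\tilde c}D$ is $D$ with action $\sigma\ast d=c_\sigma\,\sigma(d)\,c_\sigma^{-1}$, and this is precisely ${}_cD$. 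Because $P$ has trivial action, $H^0(\Gamma,{}_cP)$ is $\{p\in P: c_\sigma p c_\sigma^{-1}=p \text{ for all } \sigma\}$, which is the centralizer $C_P(c)$ of the image of $c$.

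It remains to make the action explicit and to check that the count is independent of the representative $c$. I will show that the action of $p\in C_P(c)$ on $H^1(\Gamma,{}_cD)$ is $[d_\sigma]\mapsto[s(p)\,d_\sigma\,s(p)^{-1}]$. Serre's general formula is $[b]\cdot a \mapsto [\,b'^{-1} a_\sigma\, \sigma(b')\,]$ for a lift $b'$ of $b$. Using the splitting, I choose $b'=s(p)$, which is $\Gamma$-invariant in the twisted sense because $p$ commutes with $c$. This choice makes the formula collapse to conjugation; the map is well defined on ${}_cD$-cocycles because $p$ commutes with every $c_\sigma$.

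Changing $c$ to a conjugate $qcq^{-1}$ transports everything by conjugation by $s(q)$, so each summand depends only on $[c]$. This bookkeeping, that is, matching Serre's abstract action with conjugation by $s(p)$ and confirming well-definedness, is the main obstacle; the splitting should make it routine. Summing the fibre sizes over $[c]\in H^1(\Gamma,P)$ then gives the formula in \cref{introthm:A}.
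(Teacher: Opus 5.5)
Your proposal is correct and follows essentially the paper's route. You split $1\to D\to\Aut(X)\to P\to 1$ by the $\Gamma$-fixed permutation-matrix section, twist by $s\circ c$, and identify each fibre of $\pi_*$ with $H^1(\Gamma,{}_cD)/({}_cP)^\Gamma$, where $({}_cP)^\Gamma=C_P(c)$ because $\Gamma$ acts trivially on $P$.

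Two small slips, neither affecting the count. First, the coefficient-$1$ hypothesis is what makes the permutation matrices automorphisms of $X$ at all; their entries lie in $\{0,1\}$ regardless. Second, with Serre's formula and the lift $b'=s(p)$, the action becomes $[d_\sigma]\mapsto[s(p)^{-1}d_\sigma s(p)]$ rather than conjugation by $s(p)$, which gives the same orbits.
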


Over the reals, where $H^1(\Gamma,P)$ is the set of elements of order
at most two of $P$ up to conjugation, the formula yields completely
explicit answers (see
\cref{cor:R_forms_Delsarte,cor:R_forms_Klein,cor:R_forms_Fermat}): for
$d\geq4$, the Delsarte hypersurface $T^n_d$ has exactly two real forms if
$d$ is odd and only one if $d$ is even, and the Klein hypersurface $K^n_d$
has one, two or four real forms according to whether $n$ is odd, $n$ is
even and $d$ is odd, or $n$ and $d$ are both even; for the Fermat
hypersurface ($d\geq3$) we recover the theorem of Sasaki \cite{Sasaki}:
$$ |H^1(\Gamma, \Aut(F^n_d))| = \begin{cases}
\left\lfloor n/2 \right\rfloor+ 2 & \text{if $d$ is odd,}\\
(n+3)(n+5)/8 & \text{if $d$ is even and $n$ odd,}\\
1 + (n+4)(n+6)/8 & \text{if $d$ and $n$ are even.}
\end{cases}$$

Over a finite field $k=\FF_q$ the group $\Gamma$ is topologically
generated by the Frobenius and $H^1(\Gamma,P)$ becomes the set of conjugacy
classes of $P$, so the same formula applies; but before using it one must
know $\Aut(X)$ over $K=\overline{\FF}_q$. We therefore extend the
differential method of Oguiso and Yu \cite{OY19}, in the refined form of
\cite{MR4771229}, to arbitrary characteristic (\cref{thm:GT}), making
explicit the hypotheses on $p$ that are needed. As a consequence we
show that the automorphism groups of the classical hypersurfaces over
$\overline{\FF}_q$ coincide with those over $\CC$, namely
$$\Aut(F^n_d)\cong(\ZZ/d\ZZ)^{n+1}\rtimes S_{n+2},\quad
\Aut(T^n_d)\cong\ZZ/(d-1)^{n+1}\ZZ,\quad
\Aut(K^n_d)\cong(\ZZ/m\ZZ)\rtimes\ZZ/(n+2)\ZZ,$$
with $m=\frac{(d-1)^{n+2}-(-1)^{n+2}}{d}$, under the respective assumptions
$p\nmid d(d-1)$, $p\nmid d(d-1)(d-2)$ and $p\nmid d(d-1)(d-2)m$ (and
$n\geq4$ for the Klein cubic), see
\cref{prop:fermat,prop:delsarte,prop:klein,prop:kleincubic}. The condition
$p\nmid d-1$ cannot simply be removed: when $d=p^e+1$ the Fermat equation
is a Hermitian form and $\Aut(F^n_d)$ contains the much larger unitary
group $\PGU_{n+2}(\FF_{p^{2e}})$, see \cref{rem:hermitian}. In passing, we
repair a small gap in the proof of \cite[Theorem~2.6]{MR4771229}, whose
statement remains unchanged, see \cref{rem:gap}.

With these ingredients, the counting formula gives that the Delsarte
hypersurface has exactly $\gcd(q-1,(d-1)^{n+1})$ forms over $\FF_q$
(\cref{cor:Fq_forms_Delsarte}), while for the Klein and Fermat
hypersurfaces we obtain closed formulas for the cardinalities of all the
twisted sets $H^1(\Gamma,\ _cD)$, reducing the count of $\FF_q$-forms to an
explicit orbit count, see
\cref{cor:Fq_forms_Klein,cor:Fq_forms_Fermat}.

The paper is organized as follows. In \cref{sec:diff} we develop the
differential method in positive characteristic and compute the
automorphism groups of the Fermat, Delsarte and Klein hypersurfaces over
any algebraically closed field. In \cref{sec:cohomology} we collect the
tools from non-abelian Galois cohomology that we need, following \cite{Se}.
In \cref{sec:forms} we prove Theorem A and carry out the counts of real
and $\FF_q$-forms of the classical hypersurfaces.

\subsection*{Acknowledgements}

The authors thank Giancarlo Lucchini Arteche for helpful discussions during the preparation of this paper.

\section{The differential method in positive characteristic}\label{sec:diff}

Throughout this section $K$ denotes an algebraically closed field of
characteristic $p\geq 0$ and $V$ a $K$-vector space of dimension $n+2$ with
$n\geq 2$. We denote by $\pi\colon\GL(V)\to\PGL(V)$ the canonical projection;
for $\varphi\in\PGL(V)$ we write $\tvarphi\in\GL(V)$ for a chosen lift and
$\tvarphi^*\colon S(V^*)\to S(V^*)$, $\tvarphi^*(F)=F\circ\tvarphi$, for the
induced graded automorphism of the symmetric algebra. We fix dual bases
$\beta=\{e_0,\ldots,e_{n+1}\}$ of $V$ and $\beta^*=\{x_0,\ldots,x_{n+1}\}$ of
$V^*$ and we identify $S(V^*)\simeq K[x_0,\ldots,x_{n+1}]$. For a hypersurface
$X=V(F)\subset\PP(V)$ given by $F\in S^d(V^*)$ we let
$$\Lin(X)=\{\varphi\in\PGL(V)\mid \varphi(X)=X\}
=\{\varphi\in\PGL(V)\mid \tvarphi^*(F)=\lambda F\ \text{for some}\ \lambda\in K^*\}.$$

The differential method of Oguiso and Yu \cite{OY19}, in the refined form
developed in \cite{MR4771229}, is of a purely formal nature and a
large part of it carries over to positive characteristic unchanged. The aim
of this section is to review it over $K$, indicating where the
characteristic matters, and to record the description of the
automorphism groups of the Fermat, Delsarte and Klein hypersurfaces over $K$
that will be used in the following sections. We refer to \cite{MR4771229} for
every proof that is characteristic-free.

Over $\CC$, the starting point of the differential method is the classical
theorem of Matsumura and Monsky \cite{MM64} asserting that all the
automorphisms of a smooth hypersurface are linear and that there are finitely
many of them. This theorem is in fact valid over
any algebraically closed field; the refinement that we need, due
to Benoist, is that the automorphism group \emph{scheme} is moreover reduced,
and that the list of exceptional cases is exactly the classical one.

\begin{theorem}[{\cite[Thm.~3.1]{Ben13}}]\label{thm:MM}
  Let $k$ be a field of characteristic $p\geq0$ and let $X\subset\PP^{n+1}_{k}$
  be a smooth hypersurface of dimension $n\geq2$ and degree $d\geq3$ with
  $(n,d)\neq(2,4)$. Then the group schemes $\Aut_k(X)$ and
  $\Aut_k(X,\mathcal O(1))$ coincide and are finite and reduced. In particular,
  over an algebraically closed field, $\Aut(X)=\Lin(X)$ is a finite group.
\end{theorem}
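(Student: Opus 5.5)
This theorem is quoted from Benoist, so the plan is to reconstruct the standard argument behind it. The idea is to reduce everything to deformation theory and to the Matsumura--Monsky linearity statement.

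\emph{Step 1: linearity of automorphisms.} The first step shows that every automorphism of $X$ preserves $\mathcal O_X(1)$, so that $\Aut_k(X)=\Aut_k(X,\mathcal O(1))$ as group schemes. For $n\ge3$ this follows from Grothendieck--Lefschetz: $\operatorname{Pic}(X)=\ZZ\cdot\mathcal O_X(1)$, and $\mathcal O_X(1)$ is the ample generator, so it is fixed by every automorphism, including points of the group scheme with values in any $k$-algebra. For $n=2$, adjunction gives $\omega_X=\mathcal O_X(d-4)$. Since $d\neq4$, $\omega_X$ or $\omega_X^{-1}$ is ample and is a nonzero multiple of $\mathcal O_X(1)$. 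One then needs $\operatorname{Pic}(X)$ torsion-free, which holds because $X$ is simply connected. Hence $\mathcal O_X(1)$ is the unique $(d-4)$-th root of $\omega_X$ and is preserved. The excluded case $(2,4)$, the K3 surfaces, is exactly where this argument breaks down.

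\emph{Step 2: finiteness and linearity.} $\Aut_k(X,\mathcal O(1))$ acts on $H^0(X,\mathcal O_X(1))=V^*$; this uses $H^1(\PP^{n+1},\mathcal O(1-d))=0$, so restriction of linear forms is an isomorphism. It is therefore a closed subgroup scheme of $\PGL(V)$, namely the stabilizer of the point $[F]$ in $\PP(S^dV^*)$. Over $\bar k$ this gives $\Aut(X)=\Lin(X)$. The scheme is of finite type, so finiteness and reducedness both follow once we show that its tangent space at the identity vanishes. Indeed, a group scheme of finite type with zero Lie algebra is finite étale.

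\emph{Step 3: the key step, vanishing of the tangent space.} The Lie algebra is $H^0(X,T_X)$. We would show $H^0(X,T_X)=0$ from the Euler sequence together with the normal sequence $0\to T_X\to T_{\PP}|_X\to\mathcal O_X(d)\to0$. A global vector field on $X$ lifts to a matrix $A\in\mathfrak{gl}(V)$ with $A\cdot F=\sum a_{ij}x_j\,\partial_iF\in\lambda F$. Since $X$ is smooth, $\partial_0F,\dots,\partial_{n+1}F$ form a regular sequence, even when $p\mid d$, because smoothness means they have no common zero. The Koszul relations then show that the linear syzygies of degree $1$ among the $\partial_iF$ are trivial when $d\ge3$. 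This part needs care in characteristic $p$, where the Euler relation $\sum x_i\partial_iF=dF$ may degenerate. Our approach is to work with the Jacobian ideal $J=(\partial_iF)$ and to show directly that a relation $\sum_i \ell_i\,\partial_iF=\lambda F$, with the $\ell_i$ linear, forces $\ell_i=\mu x_i$. The plan is to reduce modulo $J$ and use that $J$ has the expected Hilbert function, which is where $d\ge3$ enters.

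This characteristic-free vanishing statement is where we expect the main difficulty, and we would follow Benoist's treatment of it. Once it is established, $\Aut_k(X)$ is finite and reduced. Over an algebraically closed field it is then a finite constant group equal to $\Lin(X)$.
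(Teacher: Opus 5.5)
\textbf{There is a genuine gap in Step~3, in the case $p\mid d$.} Your Steps~1--2 are the right reduction: once $\Aut_k(X)=\Aut_k(X,\mathcal O(1))$ is a closed subgroup scheme of $\PGL(V)$, finiteness and reducedness both follow from $\h^0(X,T_X)=0$. This is also how \cref{rem:reduced} reads the statement; the paper itself only cites Benoist. The case $p\mid d$ is exactly what makes the theorem characteristic-dependent, and there your argument does not work.

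Your claim that smoothness makes $\partial_0F,\dots,\partial_{n+1}F$ a regular sequence ``even when $p\mid d$'' is false. Smoothness of $X$ only says that $F,\partial_0F,\dots,\partial_{n+1}F$ have no common zero. This is equivalent to the partials having no common zero only when $p\nmid d$, via $dF=\sum_i x_i\partial_iF$.

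If $p\mid d$, the Euler identity becomes $\sum_i x_i\,\partial_iF=0$. This is a nonzero syzygy with linear coefficients. By contrast, the syzygies of a regular sequence of forms of degree $d-1$ are generated by Koszul relations with coefficients of degree $d-1\geq2$. Hence in that case the partials are never a regular sequence, and $V(J)\subset\PP^{n+1}$ is a nonempty finite set disjoint from $X$. So $R/J$ is not Artinian, and the complete-intersection Hilbert function you intend to use is not available.

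For instance, $K^2_3$ is smooth in characteristic $3$ (\cref{rem:smooth-conv}), yet all its partial derivatives $x_{i-1}^2-x_ix_{i+1}$ vanish at $[1:1:1:1]\notin K^2_3$. So your argument proves $\h^0(X,T_X)=0$ only for $p\nmid d$, where one subtracts $\frac{\lambda}{d}\Id$ and applies Koszul. For $p\mid d$ the proposal just defers to Benoist.

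\textbf{A possible fix.} One way to close the gap avoids the Euler relation entirely. Write $T_X\cong\Omega^{n-1}_X\otimes\omega_X^{-1}=\Omega^{n-1}_X(t)$ with $t=n+2-d$, and set $q:=n-1\geq1$. Use the exterior powers $0\to\Omega^{r-1}_X(-d)\to\Omega^r_{\PP}|_X\to\Omega^r_X\to0$ of the conormal sequence. Their connecting maps give injections $\h^0(\Omega^q_X(t))\hookrightarrow\h^1(\Omega^{q-1}_X(t-d))\hookrightarrow\cdots\hookrightarrow\h^q(\mathcal O_X(t-qd))=0$, as soon as $\h^j(\Omega^{q-j}_{\PP}|_X(t-jd))=0$ for $0\leq j<q$. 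The last group vanishes because $0<q<n$.

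Restricting from $\PP^{n+1}$, the required vanishing follows from Bott vanishing, which holds in every characteristic:
\begin{itemize}
\item the term $\h^0(\Omega^q_{\PP}(t))$ vanishes because $t\leq q$, i.e.\ $d\geq3$;
\item every other nonvanishing allowed by Bott's formula would force $d=2$, or $(j+1)d=2j+3$ with $j\geq1$, which has no integer solution.
\end{itemize}

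\textbf{A smaller point in Step~1.} For $n=2$, simple connectedness only kills the prime-to-$p$ torsion of $\operatorname{Pic}(X)$. The $p$-torsion matters when $p\mid d-4$. For it, use that $\operatorname{dlog}$ embeds $\operatorname{Pic}(X)[p]$ into $\h^0(X,\Omega^1_X)$, which is $0$.
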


\begin{remark}\label{rem:reduced}
  Two comments on \cref{thm:MM}, both invisible in characteristic zero.
  First, the excluded cases are exactly the classical ones: in
  \cite[Thm.~3.1]{Ben13} the hypersurfaces of dimension $n\geq2$ appear in the
  exceptional list only as quadrics ($d=2$) and quartic surfaces
  ($(n,d)=(2,4)$, where $\Aut_k(X)$ may be infinite although
  $\Aut_k(X,\mathcal O(1))$ is finite and reduced); no new exception arises in
  positive characteristic. Second, the
  reducedness of $\Aut_k(X)$ means that $X$ has no infinitesimal automorphisms,
  i.e.\ $\h^0(X,T_X)=0$; equivalently $\Aut_{K}(X)$ is a finite \'etale group
  scheme for $K=\overline k$ (recall that $k$ is perfect in our applications).
  In particular $\Aut_K(X)$ is a finite (abstract) group equipped with a
  continuous $\Gamma$-action, which is the setting used in the following
  sections, where $H^1(\Gal(K/k),\Aut_K(X))$ classifies the $k$-forms of $X$.
\end{remark}

From now on we assume $n\geq2$; this excludes plane curves, for which \cref{thm:MM} would require the extra input of \cite[Thm.~1]{Cha78}, and
it costs nothing since all the hypersurfaces studied below have dimension at
least $2$.

We first recall the directional derivative operator attached to a form and its
rank.

\begin{definition}[{\cite[Definition 1.2]{MR4771229}}]\label{def:drank}
  Let $\mathcal{D}$ be the directional derivative operator
  $$\mathcal{D}\colon V^*\times S(V^*)\to S(V^*),\qquad (x,F)\mapsto
  \frac{\partial F}{\partial x}=\nabla(F)\cdot x,$$
  and, for a fixed homogeneous $F\in S(V^*)$, let
  $\mathcal{D}_F\colon V^*\to S(V^*)$, $x\mapsto \partial F/\partial x$ be its
  specialization. The \emph{differential rank} of $F$ is
  $\drank(F):=\operatorname{rank}\mathcal{D}_F$.
\end{definition}

\begin{proposition}[{\cite[Proposition 1.3]{MR4771229}}]\label{prop:drank}
  Let $F,G\in S(V^*)$ and let $\tvarphi\in\GL(V)$ be such that
  $\tvarphi^*(G)=\lambda F$ for some $\lambda\in K^*$. Then
  $\drank(F)=\drank(G)$. More precisely,
  \begin{equation}\label{eq:chain}
    \lambda\,\frac{\partial F}{\partial x}
    =\tvarphi^*\left(\frac{\partial G}{\partial(\tvarphi^*x)}\right)
    \qquad\text{for every }x\in V^*.
  \end{equation}
\end{proposition}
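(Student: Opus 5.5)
The identity \eqref{eq:chain} is a formal chain-rule computation, so I will verify it first and then read off the equality of differential ranks. All the objects involved are polynomial, and the argument never divides by an integer, so it holds in any characteristic.

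\textbf{Step 1: the chain rule.} Write $\tvarphi$ in the basis $\beta$ as the matrix $A$, so that $\tvarphi^*(x_j)=\sum_i a_{ji}x_i$. Extended linearly, this gives $\tvarphi^*x=x\circ\tvarphi$ for $x\in V^*$. For any $G\in S(V^*)$, the formal chain rule for polynomial substitution reads
$$\nabla(G\circ\tvarphi)=\tvarphi^*(\nabla G)\cdot A .$$
This rule is a polynomial identity with integer coefficients: it holds over $\ZZ$, so it holds over $K$ whatever $p$ is. Pairing it with the coordinate vector of $x$ gives
$$\frac{\partial(\tvarphi^*G)}{\partial x}=\tvarphi^*\left(\frac{\partial G}{\partial(\tvarphi^*x)}\right).$$
The only point to check is the identification of the directional vector: the vector $Ax$ corresponds to the linear form $\tvarphi^*x$, up to the transpose convention fixed by the dual bases, and I would fix this once at the start. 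Substituting $\tvarphi^*G=\lambda F$ on the left and using linearity of $\mathcal D$ in $F$ yields \eqref{eq:chain}.

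\textbf{Step 2: equality of ranks.} By \eqref{eq:chain}, $\mathcal D_F=\lambda^{-1}\,\tvarphi^*\circ\mathcal D_G\circ\tvarphi^*|_{V^*}$. Here $\tvarphi^*|_{V^*}$ is a linear automorphism of $V^*$, and $\tvarphi^*$ is an automorphism of $S(V^*)$, hence injective. Composing a linear map with isomorphisms on both sides does not change its rank, so $\drank(F)=\drank(G)$. The roles of $F$ and $G$ are symmetric under $\tvarphi^{-1}$, but this symmetry is not even needed.

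\textbf{Main obstacle.} There is essentially none. The one step needing care is Step 1, specifically keeping the dual/transpose conventions straight so that the directional vector really becomes $\tvarphi^*x$ and not its transpose-inverse. I would settle this by checking the identity directly on $G=x_j$ and on products, since both sides are derivations in $G$.
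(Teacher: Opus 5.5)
Your Step~1 fails at the point you flagged, and no choice of convention rescues it, because the statement already fixes the conventions: $\tvarphi^*F=F\circ\tvarphi$, and $\partial F/\partial x=\nabla(F)\cdot c$ with $c$ the coordinate vector of $x$ in $\beta^*$. Your chain rule $\nabla(\tvarphi^*G)=\tvarphi^*(\nabla G)\cdot A$ is correct. But pairing it with $c$ produces $\tvarphi^*\big(\partial G/\partial x'\big)$, where $x'$ is the linear form with coordinate vector $Ac$. On the other hand, $\tvarphi^*x=\sum_jc_j\,\tvarphi^*x_j=\sum_{i,j}c_ja_{ji}x_i$ has coordinate vector $A^{T}c$. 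The test on $G=x_j$ that you propose exposes this: the left side gives $(Ac)_j$ and the right side gives $(A^{T}c)_j$.

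In fact \eqref{eq:chain} is false as written. Let $\tvarphi(e_0)=e_0$, $\tvarphi(e_1)=e_0+e_1$ and $\tvarphi(e_i)=e_i$ for $i\geq2$, so that $\tvarphi^*x_0=x_0+x_1$ and $\tvarphi^*x_1=x_1$. Take $G=x_0$, $F=x_0+x_1$, $\lambda=1$ and $x=x_1$. The left side is $\partial F/\partial x_1=1$, while the right side is $\tvarphi^*(\partial x_0/\partial x_1)=0$, in every characteristic. The same happens with $G=x_0^d$, $F=(x_0+x_1)^d$ when $p\nmid d$. The paper's proof only invokes the chain rule for composition with a linear map and passes over the same point, so this is a defect of the stated identity, not only of your write-up.

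What your computation does prove is the transposed identity. Write $\partial_vH:=\nabla(H)\cdot c$ for $v=\sum_ic_ie_i\in V$; then $\lambda\,\partial_vF=\tvarphi^*\big(\partial_{\tvarphi(v)}G\big)$. Equivalently, \eqref{eq:chain} holds once $\tvarphi^*x$ is replaced by the form $\sum_j(Ac)_jx_j$. The two versions agree only when $Ac=A^{T}c$, for instance when $A$ is symmetric.

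The first assertion $\drank(F)=\drank(G)$ is unaffected. Your Step~2 goes through verbatim with $\tvarphi^*|_{V^*}$ replaced by the linear automorphism $\sum_ic_ix_i\mapsto\sum_j(Ac)_jx_j$ of $V^*$, since all you need is an isomorphism on each side.

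So you should state and prove the corrected identity. Consequences of \eqref{eq:chain}, such as \cref{cor:drank}, should likewise be read with $\tvarphi^*x$ replaced by this transposed form. For $x=x_i$ that form is the image of $\tvarphi(e_i)$ under $e_j\mapsto x_j$, i.e.\ a column of $A$ rather than a row.
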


\begin{proof}
  The proof in \cite{MR4771229} only uses the chain rule for the composition
  with a linear map, which is a formal identity of derivations valid over any
  commutative ring, and the fact that $\tvarphi^*$ is an isomorphism. Hence it
  is valid over $K$.
\end{proof}

\begin{corollary}[{\cite[Corollary 1.4]{MR4771229}}]\label{cor:drank}
  Let $X=V(F)\subset\PP(V)$ and let $\varphi\in\PGL(V)$ be a linear
  automorphism of $X$. Then for every $x\in V^*$ the forms
  $\partial F/\partial x$ and $\partial F/\partial(\tvarphi^*x)$ differ by
  $\tvarphi^*$ and a nonzero scalar; in particular
  $$\drank\left(\frac{\partial F}{\partial x}\right)=
  \drank\left(\frac{\partial F}{\partial (\tvarphi^*x)}\right).$$
\end{corollary}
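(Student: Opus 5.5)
The plan is to apply \cref{prop:drank} twice: once with $G=F$, and once to the derivative forms themselves. Since $\varphi$ is a linear automorphism of $X=V(F)$, any lift $\tvarphi\in\GL(V)$ satisfies $\tvarphi^*(F)=\lambda F$ for some $\lambda\in K^*$. Here I use the description of $\Lin(X)$; when $F$ is reduced this is the same as $\varphi(X)=X$, and that is the situation in all our applications.

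The first step is to apply \eqref{eq:chain} with $G=F$. It gives, for every $x\in V^*$,
$$\lambda\,\frac{\partial F}{\partial x}=\tvarphi^*\left(\frac{\partial F}{\partial(\tvarphi^*x)}\right).$$
This is exactly the first claim. The forms $\partial F/\partial x$ and $\partial F/\partial(\tvarphi^*x)$ are related by the graded automorphism $\tvarphi^*$, up to the nonzero scalar $\lambda$.

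The second step is to set $F_1=\partial F/\partial x$ and $G_1=\partial F/\partial(\tvarphi^*x)$. The identity above reads $\tvarphi^*(G_1)=\lambda F_1$, which is the hypothesis of \cref{prop:drank} for the pair $(F_1,G_1)$ with the same $\tvarphi$ and $\lambda$. That proposition then gives $\drank(F_1)=\drank(G_1)$.

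There is one degenerate case to check. In positive characteristic, $\partial F/\partial x$ may vanish identically. If $F_1=0$, the identity forces $G_1=0$, because $\tvarphi^*$ is injective, and both differential ranks are $0$. So the statement holds in this case too, and no assumption on $p$ is needed.

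The main obstacle is only bookkeeping. The one point to verify is that $\tvarphi^*$ maps each graded piece $S^e(V^*)$ isomorphically to itself. This holds because $\tvarphi^*$ is the automorphism of $S(V^*)$ induced by the invertible linear map $\tvarphi$. With this, the rank of the linear map $\mathcal{D}_{F_1}$ is preserved under the relation $\tvarphi^*(G_1)=\lambda F_1$, and this is exactly the content of \cref{prop:drank}, which the paper has already established as characteristic-free.
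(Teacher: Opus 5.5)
Your proposal is correct and is the intended derivation; the paper gives no separate proof and simply cites \cite[Corollary 1.4]{MR4771229}. You apply \eqref{eq:chain} with $G=F$ to get $\lambda\,\partial F/\partial x=\tvarphi^*\big(\partial F/\partial(\tvarphi^*x)\big)$, and then apply the rank statement of \cref{prop:drank} to the pair of derivatives; your separate check of the case $\partial F/\partial x=0$ is harmless but unnecessary, since \cref{prop:drank} has no nonvanishing hypothesis.
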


To exploit \cref{cor:drank} we need to control the combinatorics of the
monomials of $F$, and we keep for this the notions of
\cite[Definitions 2.1 and 2.2]{MR4771229}: a matrix is a
\emph{generalized permutation matrix} if it has at most one nonzero entry in
each row and each column, and a \emph{generalized triangular matrix} if it is of
the form $P_1TP_2$ with $P_1,P_2$ permutation matrices and $T$ upper triangular;
we write $\GP(V,\beta)$, $\GT(V,\beta)$ for the corresponding subsets of
$\GL(V)$ and $\PGP(V,\beta)$, $\PGT(V,\beta)$ for their images in $\PGL(V)$.
The \emph{distance} between two monomials $\prod x_i^{a_i}$ and $\prod
x_i^{b_i}$ is $\sum_i|a_i-b_i|$, the \emph{sparsity} $\Spar(F)$ of $F$ is the
minimum of the distances between two distinct monomials of $F$ (with the
convention $\Spar(F)=\infty$ if $F$ has at most one monomial), and
$\Vars(F)\subseteq\beta^*$ is the set of variables occurring in $F$.

In positive characteristic a variable may occur in $F$ and yet not be seen by
the operator $\mathcal{D}_F$, as it happens with $x_0$ in $F=x_0^p$. This is the only
difference with the characteristic zero case.

\begin{definition}\label{def:effvars}
  Let $F\in S(V^*)$ be homogeneous. The set of \emph{effective variables} of
  $F$ is
  $$\Vars_p(F):=\left\{x\in\beta^*\ \Big|\ \frac{\partial F}{\partial x}\neq0
  \right\}\subseteq\Vars(F).$$
\end{definition}

If $p=0$ then $\Vars_p(F)=\Vars(F)$. With this notation, Lemma 2.3 of
\cite{MR4771229} holds over $K$ without any hypothesis on the characteristic.

\begin{lemma}\label{lem:rank}
  Let $F\in S(V^*)$ be homogeneous of degree $d\geq2$ with $\Spar(F)>2$. Then
  $\drank(F)=\#\Vars_p(F)$.
\end{lemma}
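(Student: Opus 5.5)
The plan is to compute the rank of the linear map $\mathcal D_F\colon V^*\to S^{d-1}(V^*)$ directly in the basis $\beta^*$. Since $\mathcal D_F(\sum_i a_ix_i)=\sum_i a_i\,\partial F/\partial x_i$, the image of $\mathcal D_F$ is the span of the partial derivatives $\partial F/\partial x_0,\ldots,\partial F/\partial x_{n+1}$. By \cref{def:effvars}, the nonzero ones among them are exactly those with $x_i\in\Vars_p(F)$. Hence $\drank(F)\leq\#\Vars_p(F)$, and equality is equivalent to the nonzero partials $\partial F/\partial x_i$, $x_i\in\Vars_p(F)$, being linearly independent.

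To prove this independence, I will show that these partials have pairwise disjoint monomial supports. Write $F=\sum_m c_m m$ over its monomials $m$. Then $\partial F/\partial x_i=\sum_m c_m a_i(m)\, m/x_i$, where $a_i(m)$ is the exponent of $x_i$ in $m$ and the sum runs over the $m$ with $x_i\mid m$. The map $m\mapsto m/x_i$ is injective, so no cancellation occurs between distinct monomials. Thus the support of $\partial F/\partial x_i$ is the set of $m/x_i$ with $p\nmid a_i(m)$; only these coefficients $c_m a_i(m)$ survive in characteristic $p$.

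Now suppose $i\neq j$ and some monomial $u$ lies in both supports, say $u=m/x_i=m'/x_j$ with $m,m'$ monomials of $F$. Then $m=u x_i$ and $m'=u x_j$. These are distinct because $i\neq j$, and their distance is $|1-0|+|0-1|=2$. This contradicts $\Spar(F)>2$, so the supports are pairwise disjoint.

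Nonzero polynomials with pairwise disjoint supports are linearly independent: in a vanishing linear combination, look at the coefficient of any monomial in the support of one of them. Hence the nonzero partials are linearly independent and $\drank(F)=\#\Vars_p(F)$.

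The only point requiring care compared with characteristic zero is the bookkeeping of which partials vanish. This is absorbed by defining $\Vars_p(F)$ via nonvanishing of $\partial F/\partial x$. The sparsity argument itself is characteristic-free, since it only concerns supports.
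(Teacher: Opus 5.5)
Your proposal is correct and follows essentially the same route as the paper: the nonzero partials $\partial F/\partial x_i$ have pairwise disjoint supports because a common monomial would produce two monomials of $F$ at distance $2$, so they are linearly independent and $\operatorname{rank}\mathcal{D}_F=\#\Vars_p(F)$. The only difference is cosmetic: you count the dimension of the image of $\mathcal{D}_F$, while the paper identifies its kernel as the span of $\beta^*\setminus\Vars_p(F)$.
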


\begin{proof}
  Let $i\neq j$. If $\partial F/\partial x_i$ and $\partial F/\partial x_j$ had
  a common monomial, there would exist monomials $x^a\neq x^b$ of $F$ with
  $x^{a-e_i}=x^{b-e_j}$, and then the distance between $x^a$ and $x^b$ would be
  $2$, contradicting $\Spar(F)>2$. Hence the nonzero elements of
  $\{\partial F/\partial x_i\}_{i}$ have pairwise disjoint supports and so they
  are linearly independent. As $\mathcal{D}_F(\sum_ic_ix_i)=\sum_ic_i\,\partial
  F/\partial x_i$, we get that $\ker \mathcal{D}_F$ is the span of
  $\beta^*\setminus\Vars_p(F)$, whence $\operatorname{rank}\mathcal{D}_F=\#\Vars_p(F)$.
\end{proof}

\begin{definition}\label{def:poset}
  We endow $\beta^*$ with the relation $\leq_F$ given by
  $$x_i\leq_F x_j\iff \Vars_p\left(\frac{\partial F}{\partial x_i}\right)
  \subseteq\Vars_p\left(\frac{\partial F}{\partial x_j}\right).$$
  As in \cite[Remark 2.5]{MR4771229}, $(\beta^*,\leq_F)$ need not be a partial
  order; we say that a poset $(\beta^*,\leq_F)$ is \emph{trivial} if
  $x_i\leq_Fx_j$ implies $i=j$.
\end{definition}

\begin{theorem}\label{thm:GT}
  Let $X=V(F)\subset\PP(V)$ be a smooth hypersurface of dimension $n\geq2$ and
  degree $d\geq3$ with $(n,d)\neq(2,4)$, and assume $\Spar(F)>4$.
  Assume moreover the following condition $(\star)$, which is automatic
  when $p=0$ and holds for every $p$-tame form (see \cref{lem:tame} below):
  for every $i$ such that $\partial^2F/\partial x_i^2=0$, every monomial of
  $F$ has degree at most one in $x_i$.
  \begin{enumerate}
    \item If $(\beta^*,\leq_F)$ is a poset, then $\Aut(X)\subseteq\PGT(V,\beta)$.
      More precisely, for every $\varphi\in\Aut(X)$ there is a permutation
      $\tau$ of $\{0,\ldots,n+1\}$ with
      $\drank(\partial F/\partial x_i)=\drank(\partial F/\partial x_{\tau(i)})$
      such that
      \begin{equation}\label{eq:shape}
        \tvarphi^*x_i=\tvarphi_{i\tau(i)}x_{\tau(i)}
        +\sum_{x_\ell<_Fx_{\tau(i)}}\tvarphi_{i\ell}x_\ell ,
        \qquad \tvarphi_{i\tau(i)}\neq0 .
      \end{equation}
    \item If $(\beta^*,\leq_F)$ is trivial, then $\Aut(X)\subseteq\PGP(V,\beta)$.
  \end{enumerate}
\end{theorem}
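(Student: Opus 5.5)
The plan is to follow the characteristic-zero argument of \cite[Theorem~2.6]{MR4771229}, using effective variables in place of variables throughout. By \cref{thm:MM}, every $\varphi\in\Aut(X)$ is linear. Fix a lift $\tvarphi$, write $G_\ell=\partial F/\partial x_\ell$, and write $\tvarphi^*x_i=\sum_\ell\tvarphi_{i\ell}x_\ell$. By linearity of $\mathcal D$,
$$H_i:=\frac{\partial F}{\partial(\tvarphi^*x_i)}=\sum_{\ell}\tvarphi_{i\ell}\,G_\ell .$$
By \cref{cor:drank}, $\drank(H_i)=\drank(G_i)$. The whole proof then reduces to computing $\drank(H_i)$ combinatorially.

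\emph{Step 1: the linear combination $H_i$.} If $\Spar(F)>4$, then any linear combination $H=\sum a_\ell G_\ell$ satisfies $\Spar(H)>2$. Indeed, each monomial of $H$ has the form $x^{a-e_\ell}$ with $x^a$ a monomial of $F$. Two such monomials coming from distinct monomials of $F$ are at distance at least $\Spar(F)-2>2$. Moreover, the same monomial of $F$ cannot produce a collision for distinct $\ell$, and no cancellation between different $\ell$ can occur. Hence \cref{lem:rank} gives $\drank(H)=\#\Vars_p(H)$.

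Next I would show that
$$\Vars_p(H)=\bigcup_{a_\ell\neq0}\Vars_p(G_\ell).$$
For a variable $y$, $\partial H/\partial y=\sum a_\ell\,\partial^2F/\partial x_\ell\partial y$. The same distance argument shows that these summands have pairwise disjoint supports: a common monomial would come from two monomials of $F$ at distance $\le2$. So $\partial H/\partial y\neq0$ iff some summand with $a_\ell\neq0$ is nonzero. This is the step where the characteristic enters, and I expect it to be the main obstacle. One must make sure that the coefficient of a monomial of $\partial^2F/\partial x_\ell\partial y$ is a nonzero multiple of the corresponding coefficient of $F$. The dangerous case is $y=x_\ell$, where an exponent factor $a(a-1)$ may vanish mod $p$. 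This is exactly where condition $(\star)$ is used: if $\partial^2F/\partial x_\ell^2=0$, then $x_\ell$ occurs with degree $\le1$, so no monomial of $F$ is hidden by vanishing binomial factors. I would also check here that each $G_\ell\neq0$. If some $G_\ell$ vanished, then $x_\ell\leq_F x_j$ for every $j$, which contradicts antisymmetry when $(\beta^*,\leq_F)$ is a poset. Alternatively, smoothness can be used to exclude it.

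\emph{Step 2: the counting argument.} Since $\det(\tvarphi_{i\ell})\neq0$, some term in the Leibniz expansion is nonzero. So there is a permutation $\tau$ with $\tvarphi_{i\tau(i)}\neq0$ for all $i$. By Step 1 and \cref{lem:rank},
$$\#\Vars_p(G_i)=\drank(G_i)=\drank(H_i)=\#\bigcup_{\tvarphi_{i\ell}\neq0}\Vars_p(G_\ell)\ \ge\ \#\Vars_p(G_{\tau(i)}).$$
Summing over $i$, and using that $\tau$ is a bijection, forces equality everywhere. Hence $\drank(G_i)=\drank(G_{\tau(i)})$. Moreover, $\Vars_p(G_\ell)\subseteq\Vars_p(G_{\tau(i)})$, i.e.\ $x_\ell\leq_F x_{\tau(i)}$, whenever $\tvarphi_{i\ell}\neq0$. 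By antisymmetry of the poset, either $\ell=\tau(i)$ or $x_\ell<_F x_{\tau(i)}$, which is exactly \eqref{eq:shape}.

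Choosing a linear extension of $\leq_F$ to order the basis, the matrix of $\tvarphi^*$ becomes a permutation matrix times a triangular matrix. This gives $\Aut(X)\subseteq\PGT(V,\beta)$ and proves (1). For (2), triviality of the poset leaves only $\ell=\tau(i)$ in \eqref{eq:shape}, so $\tvarphi$ is a generalized permutation matrix.
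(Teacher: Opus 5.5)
There is a genuine gap in Step 1. You claim that $\Spar(H)>2$ for every $H=\sum_\ell a_\ell G_\ell$, and this is false. You checked two things: distinct monomials of $F$ give far-apart monomials of $H$, and a single monomial $x^a$ of $F$ never produces the \emph{same} monomial for two different $\ell$. You did not check the distance between $x^{a-e_\ell}$ and $x^{a-e_{\ell'}}$, which is exactly $2$. For the Klein form, $\partial F/\partial(x_0+x_1)$ contains both $x_0^{d-2}x_1$ and $x_0^{d-1}$. For a single $G_i$ there is no problem, since all its monomials are shifted by the same $e_i$. For a genuine linear combination, however, \cref{lem:rank} cannot be invoked, so $\drank(H)=\#\Vars_p(H)$ is left unproved. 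This is precisely the gap in the proof of \cite[Theorem~2.6]{MR4771229} described in \cref{rem:gap}.

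The equality is also not a formal consequence of sparsity. For the Klein form with $p\mid d-2$, one has $\drank(\partial F/\partial(x_0+x_1))=3$ while $\#\Vars_p=4$. Your argument would ``prove'' the equality in that case too, because it never genuinely uses $(\star)$ or smoothness. The union formula $\Vars_p(H)=\bigcup_{a_\ell\neq0}\Vars_p(G_\ell)$ follows from two facts alone: the supports of the $M_{\ell k}=\partial^2F/\partial x_\ell\partial x_k$ are disjoint, and $\Vars_p$ is defined via the actual derivatives (if $M_{\ell\ell}=0$, then simply $x_\ell\notin\Vars_p(G_\ell)$). So the place where you invoke $(\star)$ is not where it is needed.

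What is missing is a direct proof that the $\partial H/\partial x_k$ with $x_k\in\Vars_p(H)$ are linearly independent. Write $H=\sum_j c_jG_j$.
\begin{enumerate}
\item Since $\Spar(F)>4$, the forms $M_{jk}$ and $M_{j'k'}$ have disjoint supports whenever $\{j,k\}\neq\{j',k'\}$. Hence a relation $\sum_k t_k\,\partial H/\partial x_k=0$ splits into $(t_kc_j+t_jc_k)M_{jk}=0$ for $j\neq k$ and $t_kc_kM_{kk}=0$.
\item Suppose $t_{k_0}\neq0$ with $x_{k_0}\in\Vars_p(H)$, and choose $j_0$ with $c_{j_0}\neq0$ and $M_{j_0k_0}\neq0$. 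Then $j_0\neq k_0$, $c_{k_0}\neq0$ and $t_{j_0}\neq0$, hence $M_{j_0j_0}=M_{k_0k_0}=0$.
\item Only now does $(\star)$ enter: every monomial of $F$ has degree at most one in $x_{j_0}$ and in $x_{k_0}$. Since $d\geq3$, the line through $[e_{j_0}]$ and $[e_{k_0}]$ lies in $X$.
\item All partial derivatives of $F$ vanish at $[e_{j_0}]$: along the whole line when $d\geq4$, and as multiples of $x_{j_0}x_{k_0}$ when $d=3$. This contradicts smoothness.
\end{enumerate}
With this in place, your Step 2 goes through as written: the permutation $\tau$ from the Leibniz expansion, summing the inequalities to force equality, and antisymmetry to obtain \eqref{eq:shape}.
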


\begin{proof}
  By \cref{thm:MM} we have $\Aut(X)=\Lin(X)\subseteq\PGL(V)$, and the
  proof of \cite[Theorem 2.6, Remark 2.7 and Corollary 2.8]{MR4771229} goes
  through once \cite[Lemma 2.3]{MR4771229} is replaced by \cref{lem:rank}. Indeed, the only input is the identity
  \begin{equation}\label{eq:vars-union}
    \drank\left(\frac{\partial F}{\partial(\tvarphi^*x_i)}\right)
    =\#\!\!\bigcup_{\{j\,\mid\,\tvarphi_{ij}\neq0\}}\!\!
    \Vars_p\left(\frac{\partial F}{\partial x_j}\right),
  \end{equation}
  which we now check; this is where our argument differs from
  the proof of \cite[Theorem 2.6]{MR4771229}, and it is the only place where
  smoothness and condition $(\star)$ are used, see \cref{rem:gap} below.

  Write $G:=\partial F/\partial(\tvarphi^*x_i)=\sum_j c_j\,\partial
  F/\partial x_j$ with $c_j:=\tvarphi_{ij}$, and set
  $M_{jk}:=\partial^2F/\partial x_j\partial x_k=M_{kj}$. If
  $\{j,k\}\neq\{j',k'\}$ as unordered pairs, then $M_{jk}$ and $M_{j'k'}$
  have disjoint supports: a common monomial yields monomials $x^a$, $x^b$ of
  $F$ with $a-e_j-e_k=b-e_{j'}-e_{k'}$, so either $a=b$ and
  $\{j,k\}=\{j',k'\}$, or $a\neq b$ and their distance is at most $4$,
  contradicting $\Spar(F)>4$. In particular, for fixed $k$ no cancellation
  occurs in $\partial G/\partial x_k=\sum_j c_jM_{jk}$, whence
  $x_k\in\Vars_p(G)$ if and only if $M_{jk}\neq0$ for some $j$ with
  $c_j\neq0$; that is,
  $$\Vars_p(G)=\bigcup_{\{j\,\mid\,c_j\neq0\}}
  \Vars_p\left(\frac{\partial F}{\partial x_j}\right).$$
  It remains to show that $\drank(G)=\#\Vars_p(G)$, i.e.\ that the forms
  $\partial G/\partial x_k$ with $x_k\in\Vars_p(G)$ are linearly independent.
  Suppose that $\sum_k t_k\,\partial G/\partial x_k=0$ and that
  $t_{k_0}\neq0$ for some $x_{k_0}\in\Vars_p(G)$. Grouping
  $\sum_{j,k}t_kc_jM_{jk}$ by unordered pairs and using the disjointness of
  the supports, we obtain
  $$(t_kc_j+t_jc_k)\,M_{jk}=0\ \ \text{for all }j\neq k,
  \qquad t_kc_k\,M_{kk}=0\ \ \text{for all }k.$$
  Pick $j_0$ with $c_{j_0}\neq0$ and $M_{j_0k_0}\neq0$. If $j_0=k_0$, the
  second relation gives $t_{k_0}c_{k_0}=0$, a contradiction; hence
  $j_0\neq k_0$ and $t_{k_0}c_{j_0}+t_{j_0}c_{k_0}=0$, which forces
  $c_{k_0}\neq0$ and $t_{j_0}\neq0$, and then $M_{k_0k_0}=M_{j_0j_0}=0$. By
  condition $(\star)$, every monomial of $F$ has degree at most one in
  $x_{j_0}$ and in $x_{k_0}$, while $M_{j_0k_0}\neq0$ provides a monomial of
  $F$ divisible by $x_{j_0}x_{k_0}$. Consider the line
  $L=\PP(\langle e_{j_0},e_{k_0}\rangle)\subseteq\PP(V)$. Since $d\geq3$, no
  monomial of $F$ is supported in $\{x_{j_0},x_{k_0}\}$, so $L\subseteq X$.
  A monomial of $\partial F/\partial x_\ell$ supported in
  $\{x_{j_0},x_{k_0}\}$ has degree $d-1\geq2$ and degree at most one in each
  of $x_{j_0}$, $x_{k_0}$; hence for $d\geq4$ there is none and all the
  partial derivatives of $F$ vanish along $L$, while for $d=3$ the
  restriction of every partial derivative to $L$ is a multiple of
  $x_{j_0}x_{k_0}$ and they all vanish at the point $[e_{j_0}]\in L$. In both
  cases $X$ acquires a singular point, contradicting smoothness. Hence
  $\ker \mathcal{D}_G$ is the span of $\beta^*\setminus\Vars_p(G)$ and
  $\drank(G)=\#\Vars_p(G)$, which is \eqref{eq:vars-union}.
\end{proof}

\begin{remark}\label{rem:gap}
  The proof of \cite[Theorem 2.6]{MR4771229} deduces the identity
  \eqref{eq:vars-union} from the inequality
  $\Spar\big(\partial F/\partial(\tvarphi^*x_i)\big)>2$, which does not
  follow from $\Spar(F)>4$: two partial derivatives of one and the same
  monomial of $F$ lie at distance $2$. For instance, if
  $F=x_0^{d-1}x_1+x_1^{d-1}x_2+\cdots+x_{n+1}^{d-1}x_0$ is the equation of
  the Klein hypersurface $K^n_d$, then $\partial F/\partial(x_0+x_1)$
  contains both $x_0^{d-2}x_1$ and $x_0^{d-1}$. The proof of \cref{thm:GT}
  given above repairs this gap. Since condition $(\star)$ is automatic in
  characteristic zero, the statement of \cite[Theorem 2.6]{MR4771229} is
  unaffected. In positive characteristic, however, condition $(\star)$
  cannot be dispensed with: if $p\mid d-2$, then
  $\partial^2F/\partial x_i^2=(d-1)(d-2)x_i^{d-3}x_{i+1}$ vanishes and
  \eqref{eq:vars-union} genuinely fails, e.g.\
  $\drank\big(\partial F/\partial(x_0+x_1)\big)=3$ while the right-hand side
  of \eqref{eq:vars-union} equals $4$.
\end{remark}

In order to compute $(\beta^*,\leq_F)$ as in characteristic zero it suffices to
know that $\Vars_p$ and $\Vars$ agree for $F$ and for its first partial
derivatives. This is guaranteed by the following condition, which is the only
restriction on $d$ that we shall impose.

\begin{definition}\label{def:tame}
  A homogeneous form $F\in S(V^*)$ is \emph{$p$-tame} (with respect to $\beta$)
  if for every monomial $x^a$ of $F$ and every index $i$ we have
  $$a_i\geq1\ \Longrightarrow\ p\nmid a_i,\qquad\text{and}\qquad
  a_i\geq2\ \Longrightarrow\ p\nmid a_i-1 .$$
  Every form is $p$-tame when $p=0$.
\end{definition}

\begin{lemma}\label{lem:tame}
  If $F$ is $p$-tame, then $\Vars_p(F)=\Vars(F)$ and
  $\Vars_p(\partial F/\partial x)=\Vars(\partial F/\partial x)$ for every
  $x\in\beta^*$; moreover the monomials of $\partial F/\partial x_j$ are exactly
  the $x^{a-e_j}$ with $x^a$ a monomial of $F$ and $a_j\geq1$. In particular
  $(\beta^*,\leq_F)$ is the poset computed in characteristic zero, and
  $F$ satisfies condition $(\star)$ of \cref{thm:GT}.
\end{lemma}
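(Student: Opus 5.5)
The plan is to compute everything monomial by monomial, using $\partial x^a/\partial x_j=a_j\,x^{a-e_j}$. Since distinct monomials $x^a\neq x^b$ with $a_j,b_j\geq1$ give distinct $x^{a-e_j}\neq x^{b-e_j}$, no cancellation can occur in $\partial F/\partial x_j$. Hence the monomials of $\partial F/\partial x_j$ are exactly the $x^{a-e_j}$ with $x^a$ a monomial of $F$, $a_j\geq1$, and $p\nmid a_j$. The first condition of $p$-tameness says that $p\nmid a_j$ holds automatically whenever $a_j\geq1$. This proves the "moreover" clause. In particular $\partial F/\partial x_j\neq0$ if and only if $x_j$ divides some monomial of $F$, so $\Vars_p(F)=\Vars(F)$.

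Next I will handle $G=\partial F/\partial x_j$, whose monomials are the $x^{a-e_j}$ with coefficient $a_j c_a$. I need to show that $x_k\in\Vars(G)$ implies $\partial G/\partial x_k\neq0$. Applying the same no-cancellation argument to $G$, it suffices to check that the exponent of $x_k$ in $x^{a-e_j}$ is prime to $p$ whenever it is positive. If $k\neq j$, this exponent is $a_k\geq1$, and $p\nmid a_k$ by tameness. If $k=j$, the exponent is $a_j-1\geq1$, so $a_j\geq2$, and the second tameness condition gives $p\nmid a_j-1$. Thus $\Vars_p(\partial F/\partial x)=\Vars(\partial F/\partial x)$ for each $x\in\beta^*$.

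For the remaining two claims I will argue as follows.

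\begin{itemize}
\item The relation $\leq_F$ is defined only through the sets $\Vars_p(\partial F/\partial x_i)$. These now equal $\Vars(\partial F/\partial x_i)$, and by the description above they are determined purely combinatorially by the exponent vectors of $F$. They therefore coincide with the sets computed in characteristic zero.
\item For condition $(\star)$, suppose $\partial^2F/\partial x_i^2=0$. The coefficient of $x^{a-2e_i}$ in it is $a_i(a_i-1)c_a$, and distinct $a$ give distinct monomials. So every monomial with $a_i\geq2$ would need $p\mid a_i(a_i-1)$, which tameness forbids. Hence every monomial has $a_i\leq1$.
\end{itemize}

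There is no real obstacle. The only point requiring care is the injectivity of $a\mapsto a-e_j$, which is what guarantees that no cancellation occurs among the monomials.
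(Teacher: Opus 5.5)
Your proof is correct and is essentially the paper's argument. Injectivity of $a\mapsto a-e_j$ (resp.\ $a\mapsto a-e_j-e_k$) rules out cancellation, and the two tameness conditions make the coefficients $c_a a_j$, $c_a a_j a_k$ and $c_a a_j(a_j-1)$ nonzero, which yields every claim including condition $(\star)$.
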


\begin{proof}
  For a fixed $j$ the monomials $x^{a-e_j}$, with $x^a$ running over the
  monomials of $F$ with $a_j\geq1$, are pairwise distinct, so no cancellation
  occurs in $\partial F/\partial x_j=\sum_a c_a\,a_j\,x^{a-e_j}$ and every
  coefficient $c_aa_j$ is nonzero. The same argument applied to
  $\partial^2F/\partial x_j\partial x_i$, whose coefficients are
  $c_a\,a_j(a_j-1)$ if $i=j$ and $c_a\,a_ja_i$ if $i\neq j$, gives the second
  assertion. In particular, if a monomial $x^a$ of $F$ has $a_i\geq2$,
  then $\partial^2F/\partial x_i^2$ contains the monomial $x^{a-2e_i}$ with
  coefficient $c_a\,a_i(a_i-1)\neq0$; hence $\partial^2F/\partial x_i^2=0$
  forces every monomial of $F$ to have degree at most one in $x_i$, which is
  condition $(\star)$ of \cref{thm:GT}.
\end{proof}

\begin{remark}\label{rem:tame-degree}
  All the forms considered below have their exponents in $\{0,1,d-1,d\}$; such a
  form is $p$-tame as soon as $p\nmid d(d-1)(d-2)$. More precisely, the
  condition $p\nmid d-2$ is needed only when the exponent $d-1$ occurs (Klein
  and Delsarte forms) and the condition $p\nmid d$ only when the exponent $d$
  occurs; for the Fermat form, whose exponents lie in $\{0,d\}$, being $p$-tame
  is equivalent to $p\nmid d(d-1)$.
\end{remark}

We can now apply the above to the classical hypersurfaces and determine their
automorphism groups over $K$. Fix $d\geq3$ and consider in $\PP^{n+1}_{K}$ the Fermat, Delsarte and Klein
hypersurfaces $F^n_d=V(\mathcal F)$, $T^n_d=V(\mathcal T)$, $K^n_d=V(\mathcal K)$
given by
$$\mathcal F=\sum_{i=0}^{n+1}x_i^{d},\qquad
\mathcal T=\sum_{i=0}^{n}x_i^{d-1}x_{i+1}+x_{n+1}^d,\qquad
\mathcal K=\sum_{i\in\ZZ/(n+2)\ZZ}x_i^{d-1}x_{i+1},$$
and set, as usual, $m=\frac{(d-1)^{n+2}-(-1)^{n+2}}{d}$. A direct computation
gives
\begin{equation}\label{eq:spar}
  \Spar(\mathcal F)=2d,\qquad \Spar(\mathcal T)=\Spar(\mathcal K)=2d-2 .
\end{equation}
We first record the smoothness of these hypersurfaces over $K$.

\begin{lemma}\label{lem:smooth}
  Let $d\geq3$. Then:
  \begin{enumerate}
    \item $F^n_d$ is smooth if and only if $p\nmid d$;
    \item if $p\nmid d(d-1)$, then $T^n_d$ is smooth;
    \item if $p\nmid dm$, then $K^n_d$ is smooth.
  \end{enumerate}
\end{lemma}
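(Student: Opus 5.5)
The plan is to apply the Jacobian criterion in each case. We look for a point $x=(x_0,\dots,x_{n+1})\neq0$ where all partials $\partial F/\partial x_i$ vanish. When $p\nmid d$, the Euler identity $\sum_i x_i\,\partial F/\partial x_i=d\,F$ shows that such a point lies on $V(F)$. So in every case where $p\nmid d$ (which covers all three hypotheses), smoothness is equivalent to the partials having no common nontrivial zero.

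\textbf{Fermat.} The partials are $d x_i^{d-1}$. If $p\nmid d$, they vanish simultaneously only at $x=0$, so $F^n_d$ is smooth. If $p\mid d$, all partials vanish identically. Then every point of $F^n_d$ is singular; for instance $\mathcal F=(\sum x_i^{d/p})^p$ is not reduced. This gives the equivalence in (1).

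\textbf{Delsarte.} The partials are
\[
\partial_0=(d-1)x_0^{d-2}x_1,\qquad
\partial_i=x_{i-1}^{d-1}+(d-1)x_i^{d-2}x_{i+1}\ (1\le i\le n),
\]
\[
\partial_{n+1}=x_n^{d-1}+d\,x_{n+1}^{d-1}.
\]
Suppose all of them vanish at $x$, and induct along the chain. From $\partial_0=0$ we get $x_0=0$ or $x_1=0$.

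If $x_0=0$, then $\partial_1=0$ gives $x_1=0$ or $x_2=0$. Propagating, the condition ``$x_i=0$ or $x_{i+1}=0$'' plus the vanishing of $x_{i-1}$ forces a pattern. More cleanly: let $j$ be the smallest index with $x_j\neq0$. Then $\partial_j=0$ with $x_{j-1}=0$ (or the relation from $\partial_0$ when $j=0$) gives $x_{j+1}=0$, using $p\nmid d-1$. Next, $\partial_{j+1}=x_j^{d-1}+(d-1)x_{j+1}^{d-2}x_{j+2}=x_j^{d-1}\neq0$, a contradiction. This holds as long as $j+1\le n$. If $j+1=n+1$, then $\partial_{n+1}=x_n^{d-1}\neq0$ by the same computation. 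If $j=n+1$, then $\partial_{n+1}=d\,x_{n+1}^{d-1}\neq0$ since $p\nmid d$. Hence $x=0$, and $T^n_d$ is smooth.

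\textbf{Klein.} The partials are $\partial_i=(d-1)x_i^{d-2}x_{i+1}+x_{i-1}^{d-1}$ with indices in $\ZZ/(n+2)$. Note first that $p\nmid dm$ implies $p\nmid d-1$, since $m\equiv\pm1\pmod{d-1}$ up to the factor $d$; I will check this congruence directly. If some $x_i=0$, then $\partial_i=x_{i-1}^{d-1}=0$ gives $x_{i-1}=0$, and going around the cycle gives $x=0$. So we may assume all $x_i\neq0$. Then
\[
x_{i-1}^{d-1}=-(d-1)x_i^{d-2}x_{i+1}.
\]
Setting $y_i=x_{i+1}/x_i$ (after suitable normalization), this becomes a recursion. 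Multiplying the equations around the cycle gives
\[
\prod_i x_i^{d-1}=(-(d-1))^{n+2}\prod_i x_i^{d-1},
\]
so $(1-d)^{n+2}=1$ in $K$. Then $dm=(d-1)^{n+2}-(-1)^{n+2}=(-1)^{n+2}\big((1-d)^{n+2}-1\big)=0$ in $K$, contradicting $p\nmid dm$.

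The main obstacle is in this last step. I need to confirm that the cyclic product argument is legitimate, i.e.\ that every product is nonzero, which holds since all $x_i\neq0$. I also need to double-check the sign bookkeeping in $dm=\pm\big((1-d)^{n+2}-1\big)$. Finally, I must verify that $p\mid d-1$ is indeed excluded by $p\nmid dm$ (it is: if $p\mid d-1$, then $dm\equiv -(-1)^{n+2}\neq0$, so this case needs separate handling). That case would be treated directly: when $p\mid d-1$ the partials reduce to $\partial_i=x_{i-1}^{d-1}$, which vanish only at $0$, so the hypersurface is still smooth.
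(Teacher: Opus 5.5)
Your proof is correct and follows the paper's route: the Jacobian criterion, the same cyclic product argument for $K^n_d$ giving $dm=0$ in $K$, and for $T^n_d$ a propagation along the chain of equations. The only difference there is direction: you start at the first nonzero coordinate and go upward, while the paper normalizes $x_{n+1}$ and goes downward, which is an inessential change.

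One side remark is backwards. The congruence $m\equiv\pm1\pmod{d-1}$ shows that $p\mid d-1$ \emph{forces} $p\nmid dm$, not the converse. Nothing depends on this, though. When $p\mid d-1$, your product identity already reads $\prod_i x_i^{d-1}=0$, which contradicts all $x_i\neq0$, so your separate treatment of that case is correct but unnecessary.
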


\begin{proof}
  (1) is clear since the partial derivatives of $\mathcal F$ are $dx_i^{d-1}$.

  (2) Let $P$ be a common zero of all the partial derivatives
  $\partial\mathcal T/\partial x_0=(d-1)x_0^{d-2}x_1$,
  $\partial\mathcal T/\partial x_i=(d-1)x_i^{d-2}x_{i+1}+x_{i-1}^{d-1}$ for
  $1\leq i\leq n$, and
  $\partial\mathcal T/\partial x_{n+1}=x_n^{d-1}+dx_{n+1}^{d-1}$. If
  $x_{n+1}=0$ at $P$, the last equation gives $x_n=0$ and then the previous
  ones give successively $x_{n-1}=\cdots=x_0=0$, which is impossible. Hence we
  may normalize $x_{n+1}=1$, so that $x_n^{d-1}=-d\neq0$ and $x_n\neq0$;
  descending with the same equations we get $x_i\neq0$ for all $i$, which
  contradicts $\partial\mathcal T/\partial x_0=(d-1)x_0^{d-2}x_1=0$.

  (3) Let $P$ be a common zero of
  $\partial\mathcal K/\partial x_i=(d-1)x_i^{d-2}x_{i+1}+x_{i-1}^{d-1}$. If
  $x_i=0$ for some $i$, then $x_{i-1}^{d-1}=0$ and, going around the cycle,
  $P=0$. Hence all the coordinates of $P$ are nonzero, and multiplying the
  $n+2$ equations $(d-1)x_i^{d-2}x_{i+1}=-x_{i-1}^{d-1}$ we obtain
  $(d-1)^{n+2}\prod_i x_i^{d-1}=(-1)^{n+2}\prod_i x_i^{d-1}$, that is,
  $dm=(d-1)^{n+2}-(-1)^{n+2}=0$ in $K$, contradicting $p\nmid dm$.
\end{proof}

\begin{remark}\label{rem:smooth-conv}
  The converses of (2) and (3) do not hold: for instance $T^2_3$ and $K^2_3$
  are smooth in characteristic $3$, although $3\mid d(d-1)$ and
  $3\mid dm=15$ respectively. This is harmless, since
  $p\nmid d(d-1)$, resp.\ $p\nmid dm$, will be assumed anyway.
\end{remark}

\begin{proposition}[Fermat hypersurfaces]\label{prop:fermat}
  Let $n\geq2$ and $d\geq3$ with $(n,d)\neq(2,4)$. If $p\nmid d(d-1)$,
  then
  $$\Aut(F^n_d)\cong(\ZZ/d\ZZ)^{n+1}\rtimes S_{n+2}.$$
\end{proposition}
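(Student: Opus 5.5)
The plan is to apply \cref{thm:GT}(2) to $\mathcal F$ to show that every automorphism is a generalized permutation matrix, and then to determine which such matrices preserve $\mathcal F$ up to a scalar.

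First we check the hypotheses of \cref{thm:GT}. By \cref{lem:smooth}(1), $F^n_d$ is smooth since $p\nmid d$, and $(n,d)\neq(2,4)$ by assumption. By \eqref{eq:spar}, $\Spar(\mathcal F)=2d\geq6>4$. By \cref{rem:tame-degree}, $\mathcal F$ is $p$-tame because $p\nmid d(d-1)$. Hence condition $(\star)$ holds by \cref{lem:tame}. The same lemma shows that $\leq_F$ is computed as in characteristic zero. Since $\partial\mathcal F/\partial x_i=dx_i^{d-1}$ and $d\geq3$, we get $\Vars_p(\partial\mathcal F/\partial x_i)=\{x_i\}$. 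So $x_i\leq_F x_j$ forces $i=j$, and the poset is trivial. \Cref{thm:GT}(2) then gives $\Aut(F^n_d)\subseteq\PGP(V,\beta)$.

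Next we solve the equation. A lift $\tvarphi$ of a generalized permutation has the form $\tvarphi^*x_i=a_ix_{\sigma(i)}$ with $a_i\in K^*$ and $\sigma\in S_{n+2}$. Then
$$\tvarphi^*\mathcal F=\sum_i a_i^dx_{\sigma(i)}^d,$$
and this equals $\lambda\mathcal F$ exactly when $a_i^d=\lambda$ for all $i$. Rescaling the lift, we may take $\lambda=1$, so that every $a_i$ is a $d$-th root of unity. Since $p\nmid d$, the group $\mu_d(K)$ is cyclic of order $d$. Conversely, every such matrix preserves $\mathcal F$. Therefore $\Aut(F^n_d)$ is the image in $\PGL$ of the group $\mu_d^{n+2}\rtimes S_{n+2}$, where $S_{n+2}$ acts by permutation matrices.

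Finally, we compute the kernel of the projection to $\PGL$. Restricted to this group, the kernel consists of the scalar matrices it contains. A scalar matrix with a nontrivial permutation part is impossible, so the scalars in the group are exactly the diagonal scalars $\zeta\Id$ with $\zeta\in\mu_d$. The image is therefore
$$(\mu_d^{n+2}/\mu_d)\rtimes S_{n+2}\cong(\ZZ/d\ZZ)^{n+1}\rtimes S_{n+2}.$$
This works because the diagonal $\mu_d$ is normal and $S_{n+2}$-invariant, and because $S_{n+2}$ meets the scalars trivially, so it still embeds as a complement in the quotient.

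The only real obstacle is verifying the hypotheses of \cref{thm:GT}, in particular the role of $p\nmid d-1$ in tameness. The rest is routine.
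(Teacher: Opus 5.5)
Your proposal is correct and follows essentially the same route as the paper. You check the hypotheses of \cref{thm:GT} via \cref{lem:smooth}, $p$-tameness and $\Spar(\mathcal F)=2d>4$ to get $\Aut(F^n_d)\subseteq\PGP(V,\beta)$, then normalize $\lambda=1$ and quotient $\mu_d(K)^{n+2}$ by the scalar diagonal; your explicit check that $S_{n+2}$ meets the scalars trivially, and so stays a complement, is a detail the paper leaves implicit here and records only later in \cref{cor:DrtimesP}.
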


\begin{proof}
  By \cref{lem:smooth} the hypersurface is smooth and by
  \cref{rem:tame-degree} the form $\mathcal F$ is $p$-tame, so
  \cref{lem:tame} gives
  $\Vars_p(\partial\mathcal F/\partial x_i)=\Vars(dx_i^{d-1})=\{x_i\}$ and the
  poset $(\beta^*,\leq_{\mathcal F})$ is trivial. Since $\Spar(\mathcal F)=2d>4$,
  \cref{thm:GT} yields $\Aut(F^n_d)\subseteq\PGP(V,\beta)$, and we
  conclude as in \cite[Proposition 3.1]{MR4771229}: writing $\tvarphi$ as a
  diagonal matrix times a permutation matrix and normalizing $\lambda=1$ (which
  is possible since $K$ is algebraically closed), the diagonal part satisfies
  $\tvarphi_{ii}^d=1$, i.e.\ $\tvarphi_{ii}\in\mu_d(K)$. As $p\nmid d$ the group
  $\mu_d(K)$ is cyclic of order $d$, so we get an epimorphism
  $(\ZZ/d\ZZ)^{n+2}\to D$ onto the group $D$ of diagonal automorphisms, with
  kernel the diagonal $\Delta\cong\ZZ/d\ZZ$; hence $D\cong(\ZZ/d\ZZ)^{n+1}$.
  Finally every permutation matrix preserves $\mathcal F$, so $P=S_{n+2}$.
\end{proof}

\begin{remark}\label{rem:hermitian}
  The hypothesis $p\nmid d-1$ cannot be removed. If $d=p^{e}+1$, then
  $\partial\mathcal F/\partial x_i=x_i^{p^e}$ and hence
  $\Vars_p(\partial\mathcal F/\partial x_i)=\emptyset$ for every $i$; the
  relation $\leq_{\mathcal F}$ is then not antisymmetric and
  \cref{thm:GT} does not apply. This is not a defect of the method:
  in that case $\mathcal F$ is the Hermitian form of $\FF_{p^{2e}}$, so
  $\Aut(F^n_d)$ contains $\PGU_{n+2}(\FF_{p^{2e}})$ and is in particular not contained in $\PGP(V,\beta)$.
\end{remark}

\begin{remark}\label{rmk:tilde_D}
For the Fermat hypersurface we denote by $\widetilde{D} \leq \GL(V)$ the group of diagonal automorphisms, and by $D \leq \PGL(V)$ its image under the projection $\pi\colon \GL(V) \to \PGL(V)$.
\end{remark}

\begin{proposition}[Klein hypersurfaces]\label{prop:klein}
  Let $n\geq2$ and $d\geq4$ with $(n,d)\neq(2,4)$. If $p\nmid d(d-1)(d-2)m$,
  then
  $$\Aut(K^n_d)\cong(\ZZ/m\ZZ)\rtimes\ZZ/(n+2)\ZZ.$$
\end{proposition}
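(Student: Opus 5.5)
The plan is to follow the proof of \cref{prop:fermat}: first establish that every automorphism is a generalized permutation matrix, then compute the diagonal part and the permutation part separately.

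\textbf{Setting up \cref{thm:GT}.} Since $p\nmid dm$, \cref{lem:smooth} shows that $K^n_d$ is smooth. Since $p\nmid d(d-1)(d-2)$, \cref{rem:tame-degree} shows that $\mathcal K$ is $p$-tame, because its exponents lie in $\{0,1,d-1\}$. By \cref{lem:tame}, the partial derivatives and their effective variables are then the characteristic-zero ones. Explicitly,
$$\partial\mathcal K/\partial x_i=(d-1)x_i^{d-2}x_{i+1}+x_{i-1}^{d-1},$$
so $\Vars_p(\partial\mathcal K/\partial x_i)=\{x_{i-1},x_i,x_{i+1}\}$ when $d\geq4$ (here $d-2\geq2$, so $x_i$ genuinely occurs). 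For $n\geq2$ we have $n+2\geq4$, and these three-element subsets of the cycle are pairwise incomparable. Hence the poset $(\beta^*,\leq_{\mathcal K})$ is trivial. By \eqref{eq:spar}, $\Spar(\mathcal K)=2d-2>4$ for $d\geq4$. \Cref{thm:GT}(2) therefore gives $\Aut(K^n_d)\subseteq\PGP(V,\beta)$.

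\textbf{Permutation part.} Write $\tvarphi=\Lambda\cdot P_\sigma$ with $\Lambda$ diagonal and $\sigma$ a permutation. A monomial $x_i^{d-1}x_{i+1}$ is sent to a scalar multiple of $x_{\sigma(i)}^{d-1}x_{\sigma(i+1)}$. Preservation of the set of monomials up to scalars therefore requires $\sigma(i+1)=\sigma(i)+1$ for all $i$, so $\sigma$ is a rotation of $\ZZ/(n+2)\ZZ$. Conversely, every rotation preserves $\mathcal K$, so $P=\ZZ/(n+2)\ZZ$. The diagonal matrices are normalized by $P$, which gives the semidirect product $D\rtimes P$; the section by permutation matrices shows that the extension splits.

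\textbf{Diagonal part.} A diagonal $\tvarphi=\diag(a_0,\dots,a_{n+1})$ with $\tvarphi^*\mathcal K=\lambda\mathcal K$ satisfies $a_i^{d-1}a_{i+1}=\lambda$ for all $i$. After rescaling (possible since $K$ is algebraically closed) we may assume $\lambda=1$. Then $a_{i+1}=a_i^{-(d-1)}$, so $a_i=a_0^{(-(d-1))^i}$, and the cycle closes exactly when $a_0^{(d-1)^{n+2}-(-1)^{n+2}}=1$, that is, $a_0\in\mu_{dm}(K)$. Thus the solutions with $\lambda=1$ form a cyclic group $\widetilde D\cong\mu_{dm}$, which has order $dm$ because $p\nmid dm$. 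The scalars in $\widetilde D$ are the $a_0$ with $a_0^{d}=1$, since a scalar $c$ must satisfy $c^d=1$. These form $\mu_d\subseteq\mu_{dm}$, of order $d$ since $p\nmid d$. Hence $D=\widetilde D/\mu_d\cong\ZZ/m\ZZ$, cyclic as a quotient of a cyclic group.

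\textbf{Main obstacle.} The substantive step is verifying the hypotheses of \cref{thm:GT}, in particular the triviality of the poset, which needs both $d\geq4$ and $p\nmid d-2$ so that $x_i\in\Vars_p(\partial\mathcal K/\partial x_i)$. The rest is bookkeeping of roots of unity, where $p\nmid dm$ ensures that $\mu_{dm}$ has full order.
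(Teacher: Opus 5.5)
Your proposal is correct and matches the paper's proof step by step: smoothness and $p$-tameness make the poset trivial, so \cref{thm:GT} gives $\Aut(K^n_d)\subseteq\PGP(V,\beta)$; the rotation argument identifies $P$; and the $\mu_{dm}/\mu_d$ computation gives $D\cong\ZZ/m\ZZ$. One small misattribution in your commentary: triviality of the poset is already ensured by $p\nmid(d-1)(d-2)$ and $n\geq2$ (it holds for $d=3$ as well), and $d\geq4$ is needed only for $\Spar(\mathcal K)=2d-2>4$, which is how you correctly use it in the main argument.
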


\begin{proof}
  By \cref{lem:smooth} the hypersurface is smooth and $\mathcal K$ is
  $p$-tame, so by \cref{lem:tame}
  $$\Vars_p\left(\frac{\partial\mathcal K}{\partial x_i}\right)
  =\Vars\big((d-1)x_i^{d-2}x_{i+1}+x_{i-1}^{d-1}\big)
  =\{x_{i-1},x_i,x_{i+1}\}.$$
  As $n+2\geq4$, these $n+2$ subsets of cardinality $3$ are pairwise distinct
  and pairwise incomparable, so $(\beta^*,\leq_{\mathcal K})$ is trivial; since
  $\Spar(\mathcal K)=2d-2>4$, \cref{thm:GT} gives
  $\Aut(K^n_d)\subseteq\PGP(V,\beta)$.

  Let now $\varphi\in\Aut(K^n_d)$ and write $\tvarphi=\tilde\mu\tilde P_\sigma$
  with $\tilde\mu$ diagonal and $\sigma\in S_{n+2}$. Since $\tilde\mu^*$ scales
  each monomial, $\sigma$ permutes the monomials of $\mathcal K$; as $d-1\neq1$,
  the monomial $x_i^{d-1}x_{i+1}$ is sent to $x_{\sigma(i)}^{d-1}x_{\sigma(i+1)}$,
  which forces $\sigma(i+1)=\sigma(i)+1$ for all $i$, i.e.\ $\sigma$ is a power
  of the cyclic permutation $\nu=(0\,1\,\cdots\,n+1)$. Thus $P=\langle\nu\rangle
  \cong\ZZ/(n+2)\ZZ$. For the diagonal part, as in
  \cite[Proposition 3.3]{MR4771229}, normalizing $\lambda=1$ we get
  $\tvarphi_{ii}^{d-1}\tvarphi_{i+1,i+1}=1$, hence
  $\tvarphi_{ii}=\tvarphi_{00}^{(1-d)^i}$ and
  $\tvarphi_{00}^{(1-d)^{n+2}-1}=1$. Since $|(1-d)^{n+2}-1|=dm$ and $p\nmid dm$,
  the group $\mu_{dm}(K)$ is cyclic of order $dm$, so we obtain an epimorphism
  $\ZZ/dm\ZZ\to D$ whose kernel is $\mu_d(K)$, of order $d$; therefore $D$ is
  cyclic of order $m$.
\end{proof}

\begin{remark}\label{rem:klein-n1}
  The standing hypothesis $n\geq2$ is needed here already for the poset: for
  $n=1$ one has
  $\Vars_p(\partial\mathcal K/\partial x_i)=\{x_0,x_1,x_2\}$ for every $i$, so
  $\leq_{\mathcal K}$ is not antisymmetric.
\end{remark}

\begin{proposition}[Klein cubic hypersurfaces]\label{prop:kleincubic}
  Let $d=3$, $n\geq4$ and $m=\frac{2^{n+2}-(-1)^{n+2}}{3}$. If $p\geq5$ and
  $p\nmid m$, then
  $$\Aut(K^n_3)\cong(\ZZ/m\ZZ)\rtimes\ZZ/(n+2)\ZZ.$$
\end{proposition}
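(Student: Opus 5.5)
The plan is to follow the proof of \cref{prop:klein}, but \cref{thm:GT} cannot be used here: for $d=3$ we have $\Spar(\mathcal K)=2d-2=4$, which is not $>4$. I will therefore replace the sparsity argument with a direct computation of differential ranks of quadrics, in the spirit of the treatment of the Klein cubic in \cite{MR4771229}. First, smoothness. Since $p\geq5$ we have $p\nmid 3$, and $p\nmid m$ by hypothesis, so $p\nmid dm$ and \cref{lem:smooth}(3) shows that $K^n_3$ is smooth. As $n\geq4$, \cref{thm:MM} applies and gives $\Aut(K^n_3)=\Lin(K^n_3)$.

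Next, write a linear form as $x=\sum_j c_jx_j$. Then
$$Q_c:=\frac{\partial\mathcal K}{\partial x}=\sum_j c_j\big(2x_jx_{j+1}+x_{j-1}^2\big)=\sum_{j}\big(c_{j+1}x_j^2+2c_jx_jx_{j+1}\big),$$
a quadric whose symmetric matrix $A_c$ is cyclic tridiagonal. Its diagonal entries are $(A_c)_{jj}=c_{j+1}$ and its off-diagonal entries are $(A_c)_{j,j+1}=(A_c)_{j+1,j}=c_j$. Since $p\neq2$, the differential rank of a quadric equals the rank of its symmetric matrix, because $\partial Q/\partial x_k=2\sum_\ell A_{k\ell}x_\ell$. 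For $c=e_i$ one checks directly that $\drank(\partial\mathcal K/\partial x_i)=3$, since $2x_ix_{i+1}+x_{i-1}^2$ has rank $3$.

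The key claim is that if $c$ has at least two nonzero coordinates and $n+2\geq6$, then $\operatorname{rank}A_c\geq4$. Granting this, \cref{cor:drank} shows that $\tvarphi^*x_i$ must have rank-$3$ derivative, so it is a multiple of a single variable. Hence $\tvarphi$ is a generalized permutation matrix, i.e.\ $\Aut(K^n_3)\subseteq\PGP(V,\beta)$.

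To prove the claim I will exhibit a nonzero $4\times4$ minor, splitting into cases according to the relative position of two indices $k\neq l$ with $c_k,c_l\neq0$ on the cycle $\ZZ/(n+2)\ZZ$.
\begin{itemize}
\item If $k,l$ are far apart (cyclic distance $\geq3$), the entry $c_k$ sits in the rows and columns $\{k,k+1\}$, giving the $2\times2$ block $\begin{pmatrix}c_{k+1}&c_k\\ c_k&c_{k+2}\end{pmatrix}$. I would pick columns $k,k+1$ and $l,l+1$ and rows $k+1,k$ and $l+1,l$ so that the minor is block triangular with blocks of the antidiagonal form $\begin{pmatrix}c_k&*\\0&c_k\end{pmatrix}$ and the analogue for $l$, giving determinant $\pm c_k^2c_l^2\neq0$.
\item When $k,l$ are close (adjacent, or at distance $2$), I would use the room provided by $n+2\geq6$ to choose the four rows and columns so that the minor is again triangular, with $c_k,c_l$ on the diagonal and the zero coordinates of $c$ placed off the diagonal. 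If needed, I would pass to a leading nonzero coordinate, taking $k$ with $c_k\neq0$ and $c_{k-1}=0$ when such a $k$ exists. The case where all $c_j\neq0$ is simplest, since a tridiagonal $4\times4$ window is then controlled by the off-diagonal $c_j$'s.
\end{itemize}
This case analysis is the main obstacle, and it is exactly where $n\geq4$ enters: for small $n+2$ the windows of the two indices overlap cyclically and the relevant minor can vanish.

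Finally, with $\tvarphi=\tilde\mu\tilde P_\sigma$ I will conclude verbatim as in \cref{prop:klein}. Since $d-1=2\neq1$, $\sigma$ must preserve the monomials $x_i^2x_{i+1}$, which forces $\sigma\in\langle\nu\rangle\cong\ZZ/(n+2)\ZZ$. Normalizing $\lambda=1$ gives $\tvarphi_{ii}^2\tvarphi_{i+1,i+1}=1$, so the diagonal is determined by $\tvarphi_{00}\in\mu_{3m}(K)$. Because $p\nmid3m$, the group $\mu_{3m}(K)$ is cyclic of order $3m$, and modulo the scalars $\mu_3(K)$ this gives $D\cong\ZZ/m\ZZ$. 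Every diagonal element of $D$ and the permutation $\nu$ preserve $\mathcal K$, so the group is exactly $(\ZZ/m\ZZ)\rtimes\ZZ/(n+2)\ZZ$.
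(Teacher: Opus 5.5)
Your overall route is the paper's. You show that $\drank(\partial\mathcal K/\partial x)=3$ exactly when $x$ is a multiple of a coordinate, deduce $\Aut(K^n_3)\subseteq\PGP(V,\beta)$ from \cref{cor:drank}, and finish as in \cref{prop:klein} using $p\nmid 3m$. Your identification $\drank(Q_c)=\operatorname{rank}A_c$ for $p\neq2$, and your computation of $D$ and $P$ at the end, are correct. The paper does not reprove the rank bound: it imports it from \cite[Theorem 3.5]{MR4771229}, observing only that the entries are $2c_i$ and $2$ is a unit.

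Since you chose to prove the bound yourself, that is where the gap is. In your ``far apart'' case, the block on rows $k+1,k$ and columns $k,k+1$ is $\begin{pmatrix}c_k&c_{k+2}\\ c_{k+1}&c_k\end{pmatrix}$. This is triangular only if $c_{k+1}c_{k+2}=0$, but you have only assumed $c_k,c_l\neq0$. The minor you describe actually equals $(c_k^2-c_{k+1}c_{k+2})(c_l^2-c_{l+1}c_{l+2})$, which vanishes for instance when all $c_j=1$. The ``close'' case is only announced, not carried out. Moreover, the normalization you propose ($c_{k-1}=0$) is at the wrong end: the entry that must vanish is $(A_c)_{kk}=c_{k+1}$.

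The claim itself is true, and it is repaired by taking the indices to be trailing ends of runs of nonzero coordinates. Suppose there are $k\neq l$ with $c_k\neq0=c_{k+1}$ and $c_l\neq0=c_{l+1}$. Then $\{k,k+1\}$ and $\{l,l+1\}$ are disjoint, and every cross entry between them is $c_{k+1}$, $c_{l+1}$ or $0$, whatever the distance between $k$ and $l$. So the principal minor on these four indices is block diagonal, with blocks $\begin{pmatrix}0&c_k\\ c_k&c_{k+2}\end{pmatrix}$ and $\begin{pmatrix}0&c_l\\ c_l&c_{l+2}\end{pmatrix}$, and equals $c_k^2c_l^2\neq0$.

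Otherwise the nonzero coordinates form a single cyclic run $c_a,\dots,c_b$ of length $L\geq2$, possibly all of them.
\begin{itemize}
\item If $L\geq4$, rows $a+1,\dots,a+4$ and columns $a,\dots,a+3$ give an upper triangular minor with diagonal $c_a,c_{a+1},c_{a+2},c_{a+3}$. This is where $n+2\geq6$ really enters: for $n+2=5$ the corner entry $(a+4,a)=c_{a+4}$ spoils triangularity.
\item If $L=2$, the principal minor on $\{a-1,a,a+1,a+2\}$ equals $-c_ac_{a+1}^3$.
\item If $L=3$, rows $a-1,a+1,a+2,a+3$ and columns $a-1,a,a+1,a+2$ give an upper triangular minor equal to $c_a^2c_{a+1}c_{a+2}$.
\end{itemize}
With this case analysis, or simply by citing \cite[Theorem 3.5]{MR4771229} as the paper does, your argument is complete.
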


\begin{proof}
  For $d=3$ we have $\Spar(\mathcal K)=4$ and \cref{thm:GT} does not
  apply. The finer analysis of \cite[Theorem 3.5]{MR4771229} shows directly
  that $\drank(\partial\mathcal K/\partial x)=3$ if and only if $x=c_ix_i$ for
  some $i$, hence that $\Aut(K^n_3)\subseteq\PGP(V,\beta)$. That analysis
  consists of computing the ranks of explicit $4\times4$ and $4\times6$ minors
  of the matrix of $D_{\partial\mathcal K/\partial x}$, all of whose entries are
  of the form $2c_i$ with $c_i$ the coordinates of $x$; since $p\neq2$ the
  factor $2$ is a unit and the same computation works over $K$. One concludes
  as in \cref{prop:klein}, using $p\nmid 3m$.
\end{proof}

\begin{remark}
  The hypothesis $n\geq4$ is used in case (1) of the proof of
  \cite[Theorem 3.5]{MR4771229} and cannot be dropped, see
  \cite[Remark 3.6]{MR4771229}.
\end{remark}

\begin{proposition}[Delsarte hypersurfaces]\label{prop:delsarte}
  Let $n\geq2$ and $d\geq4$ with $(n,d)\neq(2,4)$. If $p\nmid d(d-1)(d-2)$,
  then
  $$\Aut(T^n_d)\cong\ZZ/(d-1)^{n+1}\ZZ.$$
\end{proposition}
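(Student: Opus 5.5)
The plan is to follow the pattern of \cref{prop:fermat,prop:klein}: first locate $\Aut(T^n_d)$ inside $\PGT(V,\beta)$ via \cref{thm:GT}, then show it is diagonal, and finally compute the diagonal group. By \cref{lem:smooth} the hypersurface is smooth, by \cref{rem:tame-degree} the form $\mathcal T$ is $p$-tame, and $\Spar(\mathcal T)=2d-2>4$ since $d\geq4$. By \cref{lem:tame} the effective variables of the partial derivatives are
$$\Vars_p\Big(\tfrac{\partial\mathcal T}{\partial x_0}\Big)=\{x_0,x_1\},\qquad
\Vars_p\Big(\tfrac{\partial\mathcal T}{\partial x_i}\Big)=\{x_{i-1},x_i,x_{i+1}\}\ (1\leq i\leq n),\qquad
\Vars_p\Big(\tfrac{\partial\mathcal T}{\partial x_{n+1}}\Big)=\{x_n,x_{n+1}\}.$$
Since $n\geq2$ these $n+2$ sets are pairwise distinct, so $\leq_{\mathcal T}$ is a poset. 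Its only strict relations are $x_0<x_1$ and $x_{n+1}<x_n$. It is not trivial, so only part (1) of \cref{thm:GT} applies. By \cref{lem:rank} the differential ranks are $2$ for $x_0,x_{n+1}$ and $3$ for the others. Hence the permutation $\tau$ of \eqref{eq:shape} preserves $\{0,n+1\}$, and every $\varphi$ has the form
$$\tvarphi^*x_i=a_ix_{\tau(i)}\ \ (\tau(i)\notin\{1,n\}),\qquad
\tvarphi^*x_i=a_ix_1+b_ix_0\ \ (\tau(i)=1),\qquad
\tvarphi^*x_i=a_ix_n+b_ix_{n+1}\ \ (\tau(i)=n).$$

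The next step is to show $\tau=\Id$ and $b_i=0$ by substituting this shape into $\tvarphi^*\mathcal T=\lambda\mathcal T$ and comparing coefficients. To rule out $\tau(0)=n+1$, I would use that $x_{n+1}$ is the only variable whose pure $d$-th power occurs in $\mathcal T$. If $\tau(n+1)=0$, the summand $x_{n+1}^d$ produces $a_{n+1}^dx_0^d$. The only other sources of $x_0^d$ are the term $(\tvarphi^*x_i)^{d-1}\tvarphi^*x_{i+1}$ with $\tau(i)=1$ and $\tau(i+1)=1$, which is impossible, or with both images involving $x_0$. A short case check should show that $x_0^d$ survives, a contradiction. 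With $\tau(0)=0$ and $\tau(n+1)=n+1$, the chain structure then forces $\tau$ to be the identity. The monomial $x_{\tau(i)}^{d-1}x_{\tau(i+1)}$ must appear in $\mathcal T$, and since $d-1\neq1$ this requires $\tau(i+1)=\tau(i)+1$, exactly as in \cref{prop:klein}. Propagating from $\tau(0)=0$ gives $\tau=\Id$. Then only $\tvarphi^*x_1=a_1x_1+b_1x_0$ and $\tvarphi^*x_n=a_nx_n+b_nx_{n+1}$ can be non-monomial. To kill $b_1$, I would expand $(\tvarphi^*x_1)^{d-1}\tvarphi^*x_2$ and read off the coefficient of $x_0^{d-1}x_2$, namely $b_1^{d-1}a_2$. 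The only other possible source of this monomial is $(\tvarphi^*x_0)^{d-1}\tvarphi^*x_1$, which contributes only $x_0^{d-1}x_1$ and $x_0^d$. So $b_1=0$. Symmetrically, the coefficient of $x_{n-1}^{d-1}x_{n+1}$ is $a_{n-1}^{d-1}b_n$, which forces $b_n=0$. The case $n=2$, where $x_1$ and $x_n$ are adjacent, needs a separate check of the same kind. I expect this coefficient bookkeeping, especially the exclusion of $\tau(0)=n+1$ and the small-$n$ overlaps, to be the main obstacle. If it gets messy, I would first try the alternative of showing $\tau$ must preserve $\leq_{\mathcal T}$, which immediately gives $\tau(1)=1$ whenever $\tau(0)=0$.

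Once $\tvarphi$ is diagonal with entries $a_i$, normalize $\lambda=1$, which is possible since $K$ is algebraically closed. The conditions are $a_i^{d-1}a_{i+1}=1$ for $0\leq i\leq n$ and $a_{n+1}^d=1$. These give $a_i=a_0^{(1-d)^i}$ and $a_0^{d(1-d)^{n+1}}=1$. Since $p\nmid d(d-1)$, the group $\mu_{d(d-1)^{n+1}}(K)$ is cyclic of that order and surjects onto the diagonal group. The kernel is given by the scalar solutions, i.e.\ those with all $a_i$ equal. These satisfy $a_0^d=1$, and conversely every $d$-th root of unity satisfies the conditions, so the kernel is $\mu_d(K)$. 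Therefore $\Aut(T^n_d)\cong\ZZ/(d-1)^{n+1}\ZZ$.
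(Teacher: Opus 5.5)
Your overall plan matches the paper's: the same application of \cref{thm:GT}, the same shape for $\tvarphi^*x_i$, monomial comparisons to force $\tvarphi$ to be diagonal, and the same computation of the diagonal group. The elimination of $b_1$ and $b_n$ and the final count are fine, and $n=2$ needs no separate check.

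The gap is in excluding $\tau(0)=n+1$. In that case $\tau(n+1)=0$, and nothing yet forbids $\tau(n)=1$, i.e.\ $\tvarphi^*x_n=a_nx_1+b_nx_0$ and $\tvarphi^*x_{n+1}=a_{n+1}x_0$. The coefficient of $x_0^d$ in $\tvarphi^*\mathcal T$ is then $a_{n+1}^d+b_n^{d-1}a_{n+1}=a_{n+1}\bigl(a_{n+1}^{d-1}+b_n^{d-1}\bigr)$. This vanishes for a suitable $b_n\neq0$, so no case check confined to $x_0^d$ can show that this monomial survives.

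You need further input. For instance, $x_1^{d-2}x_0^2$ arises only from the summand $i=n$, with coefficient $(d-1)a_n^{d-2}b_na_{n+1}$. Since $p\nmid d-1$, this forces $b_n=0$, and only then does $x_0^d$ survive. The paper instead uses \eqref{eq:chain}: $\tvarphi^*x_0=a_0x_{n+1}$ would make $\partial\mathcal T/\partial x_0=(d-1)x_0^{d-2}x_1$ and $\partial\mathcal T/\partial x_{n+1}=x_n^{d-1}+dx_{n+1}^{d-1}$ differ by a linear change of variables and a nonzero scalar. That is impossible, since they have $2$ and $d-1\geq3$ distinct linear factors respectively.

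Relatedly, you justify the chain step ``exactly as in \cref{prop:klein}'', but there $\tvarphi$ was already monomial. Here the term $a_i^{d-1}a_{i+1}x_{\tau(i)}^{d-1}x_{\tau(i+1)}$ could a priori be cancelled by contributions of the $b_j$. It is not, because $d-1\geq3$:
\begin{itemize}
\item for $1\leq i\leq n$, the variable $x_{\tau(i)}$ occurs only in $\tvarphi^*x_i$;
\item for $i=0$, the only other summand of degree $\geq d-1$ in $x_{\tau(0)}$ is the one indexed by $\tau^{-1}(1)$ (resp.\ $\tau^{-1}(n)$), and a direct check shows it contributes nothing to $x_{\tau(0)}^{d-1}x_{\tau(1)}$.
\end{itemize}
The check at $i=0$ does not use $\tau(0)=0$. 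Once written out, it gives the cleanest repair of the gap above: if $\tau(0)=n+1$, then $x_{n+1}^{d-1}x_{\tau(1)}$ occurs in $\tvarphi^*\mathcal T$ with coefficient $a_0^{d-1}a_1\neq0$, yet it is not a monomial of $\mathcal T$. This is close in spirit to the paper's Step~2, which compares the monomials divisible by $x_0^{d-1}$.
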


\begin{proof}
  By \cref{lem:smooth} the hypersurface is smooth and $\mathcal T$ is
  $p$-tame, so by \cref{lem:tame}
  $$\Vars_p\left(\frac{\partial\mathcal T}{\partial x_i}\right)=
  \begin{cases}
    \{x_0,x_1\} & i=0,\\
    \{x_{i-1},x_i,x_{i+1}\} & 1\leq i\leq n,\\
    \{x_n,x_{n+1}\} & i=n+1,
  \end{cases}$$
  so that $(\beta^*,\leq_{\mathcal T})$ is a poset whose only non-trivial
  relations are $x_0<_{\mathcal T}x_1$ and $x_{n+1}<_{\mathcal T}x_n$. Since
  $\Spar(\mathcal T)=2d-2>4$, \cref{thm:GT} applies: for
  $\varphi\in\Aut(T^n_d)$ there is a permutation $\tau$ with
  $\tau(\{0,n+1\})=\{0,n+1\}$ and $\tau(\{1,\ldots,n\})=\{1,\ldots,n\}$ and
  \begin{equation}\label{eq:delsarte-shape}
    \tvarphi^*x_i=a_ix_{\tau(i)}+b_i\,x_0\,[\tau(i)=1]+c_i\,x_{n+1}\,[\tau(i)=n],
    \qquad a_i\neq0 .
  \end{equation}
  We claim that $\tvarphi$ is diagonal.

  \emph{Step 1: $\tau(0)=0$ and $\tau(n+1)=n+1$.} Otherwise
  $\tvarphi^*x_0=a_0x_{n+1}$ and, by \eqref{eq:chain},
  $\partial\mathcal T/\partial x_0$ and $\partial\mathcal T/\partial x_{n+1}$
  differ by the ring automorphism $\tvarphi^*$ and a nonzero scalar; in
  particular they have the same number of distinct linear factors. But
  $\partial\mathcal T/\partial x_0=(d-1)x_0^{d-2}x_1$ has exactly $2$ of them,
  whereas $\partial\mathcal T/\partial x_{n+1}=x_n^{d-1}+dx_{n+1}^{d-1}$ has
  exactly $d-1\geq3$, because $t^{d-1}+d$ has no repeated root ($p\nmid d(d-1)$).

  \emph{Step 2: $\tau(1)=1$ and $b_1=0$.} By Step 1 we have
  $\tvarphi^*x_0=a_0x_0$, so the only summands of
  $\tvarphi^*(\mathcal T)=\sum_{i=0}^{n}(\tvarphi^*x_i)^{d-1}(\tvarphi^*x_{i+1})
  +(\tvarphi^*x_{n+1})^d$ having a monomial divisible by $x_0^{d-1}$ are those
  with $i=0$ and $i=i_1:=\tau^{-1}(1)$, and they add up to
  $$x_0^{d-1}\left(a_0^{d-1}\,\tvarphi^*x_1+b^{d-1}\,\tvarphi^*x_{i_1+1}\right),
  \qquad b:=b_{i_1}.$$
  Comparing with $\lambda\,\mathcal T$, whose only monomial divisible by
  $x_0^{d-1}$ is $\lambda x_0^{d-1}x_1$, we get that
  $a_0^{d-1}\tvarphi^*x_1+b^{d-1}\tvarphi^*x_{i_1+1}=\lambda x_1$. The terms
  $b_ix_0$ and $c_ix_{n+1}$ in \eqref{eq:delsarte-shape} only contribute to the
  coordinates $x_0$ and $x_{n+1}$, and $1\neq i_1+1$; hence, looking at the
  coordinate $x_{\tau(1)}$, which lies in $\{x_1,\ldots,x_n\}$, we obtain
  $\tau(1)=1$ (otherwise $a_0^{d-1}a_1=0$). Then $i_1=1$ and, looking at the
  coordinate $x_{\tau(2)}\in\{x_2,\ldots,x_n\}$, we obtain $b^{d-1}a_2=0$,
  i.e.\ $b=0$. (Here we use $n\geq2$.)

  \emph{Step 3: induction.} Assume $\tvarphi^*x_k=\alpha_kx_k$ for $0\leq k\leq j$
  with $j\leq n$. For $1\leq j\leq n$ the only $\tvarphi^*x_i$ containing $x_j$
  is $\tvarphi^*x_j$, so the monomials of $\tvarphi^*(\mathcal T)$ divisible by
  $x_j^{d-1}$ are those of $\alpha_j^{d-1}x_j^{d-1}\,\tvarphi^*x_{j+1}$, and
  comparing with $\lambda x_j^{d-1}x_{j+1}$ gives
  $\tvarphi^*x_{j+1}=\alpha_{j+1}x_{j+1}$. Hence $\tvarphi$ is diagonal.

  Finally, $\tvarphi_{ii}^{d-1}\tvarphi_{i+1,i+1}=\tvarphi_{n+1,n+1}^d$ for
  $i\in\{0,\ldots,n\}$; normalizing $\tvarphi_{n+1,n+1}^d=1$, the automorphism is
  determined by $\tvarphi_{00}$, which satisfies
  $\tvarphi_{00}^{d(1-d)^{n+1}}=1$. Since $p\nmid d(d-1)$, the group
  $\mu_{d(d-1)^{n+1}}(K)$ is cyclic of order $d(d-1)^{n+1}$, so we obtain an
  epimorphism $\ZZ/d(d-1)^{n+1}\ZZ\to\Aut(T^n_d)$ with kernel $\mu_d(K)$.
\end{proof}

In each of the three cases above the automorphism group is contained in
$\PGP(V,\beta)$ and splits as a semidirect product of its diagonal part by a
group of permutations of the monomials. We record this decomposition, which is
what the following sections use.

\begin{corollary}\label{cor:DrtimesP}
  Let $X=V(F)\subset\PP(V)$ be a smooth hypersurface of dimension
  $n\geq2$ and degree $d\geq3$ with $(n,d)\neq(2,4)$, all of whose monomials have
  coefficient $1$ and such that $\Aut(X)\subseteq\PGP(V,\beta)$, for instance
  $F^n_d$, $T^n_d$ or $K^n_d$ under the hypotheses of
  \cref{prop:fermat} and \cref{prop:delsarte}. Let $\mathcal{M}_F$ be
  the set of monomials of $F$, let $D\leq\Aut(X)$ be the subgroup of the
  automorphisms admitting a diagonal representative in $\GL(V)$ and let
  $$P:=\{\sigma\in S_{n+2}\mid \sigma(\mathcal{M}_F)=\mathcal{M}_F\}.$$
  Then $D$ is a finite abelian normal subgroup of $\Aut(X)$,
  $$\Aut(X)=D\rtimes P,$$
  and the section $s\colon P\to\Aut(X)$ is given by the permutation matrices
  $\varphi_\sigma$, whose entries lie in $\{0,1\}$.
\end{corollary}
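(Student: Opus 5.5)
The plan is to work at the level of lifts in $\GL(V)$ and then pass to $\PGL(V)$. Let $\varphi\in\Aut(X)$. Since $\Aut(X)\subseteq\PGP(V,\beta)$, we may choose a lift of the form $\tvarphi=\tilde\mu\,\tilde P_\sigma$, with $\tilde\mu$ invertible diagonal and $\tilde P_\sigma$ the permutation matrix of some $\sigma\in S_{n+2}$. This decomposition is unique up to rescaling $\tilde\mu$ by a scalar, so $\sigma$ depends only on $\varphi$. Then
\[
\tvarphi^*(x^a)=(\text{nonzero scalar})\cdot x^{\sigma\cdot a}
\]
for each monomial, because $\tilde\mu^*$ only rescales monomials. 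Hence $\tvarphi^*(F)=\lambda F$ forces $\sigma$ (suitably oriented) to map $\mathcal M_F$ bijectively onto $\mathcal M_F$, that is, $\sigma\in P$. I will fix the convention so that $\varphi\mapsto\sigma$ is a group homomorphism $\rho\colon\Aut(X)\to S_{n+2}$; a product of generalized permutation matrices has permutation part equal to the product of the permutation parts. The image of $\rho$ is then contained in $P$.

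Next I construct the section. For $\sigma\in P$, the permutation matrix $\varphi_\sigma$ satisfies $\varphi_\sigma^*(F)=F$. This is exactly where I use that all coefficients of $F$ equal $1$: a permutation of $\mathcal M_F$ fixes $F=\sum_{x^a\in\mathcal M_F}x^a$. So $s\colon\sigma\mapsto\pi(\varphi_\sigma)$ lands in $\Aut(X)$. It is a homomorphism, since permutation matrices multiply like the permutations (with the same convention as above). It is injective, because a permutation matrix that is scalar must be the identity, as $n+2\geq2$. Finally $\rho\circ s=\mathrm{id}_P$, so $\rho$ is surjective onto $P$ and split.

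The remaining step is to identify $\ker\rho$ with $D$. An element of $\ker\rho$ has lift $\tilde\mu\tilde P_{\mathrm{id}}=\tilde\mu$, which is diagonal. Conversely, an element of $D$ has a diagonal lift. Here I must check that no element of $\PGP$ admits a diagonal lift with nontrivial $\sigma$; this holds because the permutation part of a generalized permutation matrix is determined by its support, and scalars do not change the support. Thus $D=\ker\rho$ is normal, and it is abelian since diagonal matrices commute. It is finite because $\Aut(X)$ is finite by \cref{thm:MM}. The split exact sequence $1\to D\to\Aut(X)\to P\to1$ then gives $\Aut(X)=D\rtimes s(P)$.

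I expect the only delicate point to be bookkeeping rather than mathematics: fixing a consistent convention for how $\sigma$ acts on exponent vectors versus on matrices, so that both $\rho$ and $s$ are homomorphisms and not anti-homomorphisms. If needed, I will replace $\sigma$ by $\sigma^{-1}$ in the definition of $\rho$; this does not affect $P$, since $P$ is a group.
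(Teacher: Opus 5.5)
Your proposal is correct and follows essentially the same route as the paper: you write a lift as $\tilde\mu\,\tilde P_\sigma$, use that $\tilde\mu^*$ only rescales monomials to get $\sigma(\mathcal{M}_F)=\mathcal{M}_F$, use that all coefficients equal $1$ to see that the permutation matrices themselves preserve $F$ and give a section, and identify the kernel of $\varphi\mapsto\sigma$ with $D$. Your extra care about uniqueness of the decomposition up to scalars and about the homomorphism versus anti-homomorphism convention fills in details that the paper leaves implicit.
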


\begin{proof}
  Let $\varphi\in\Aut(X)$ and write $\tvarphi=\tilde\mu\,\tilde P_\sigma$ with
  $\tilde\mu$ diagonal and $P_\sigma$ a permutation matrix; such a decomposition
  is unique and $\varphi\mapsto\sigma$ is a group homomorphism with kernel $D$.
  Since $\tilde\mu^*$ multiplies each monomial by a scalar, we have
  $\tvarphi^*(F)=\sum_{\mathrm{m}\in\mathcal{M}_F}c_{\mathrm{m}}\,\sigma(\mathrm{m})$
  for some $c_{\mathrm{m}}\in K^*$, and $\tvarphi^*(F)=\lambda F$ forces
  $\sigma(\mathcal{M}_F)=\mathcal{M}_F$. As all the coefficients of $F$ are
  equal to $1$, this yields $\tilde P_\sigma^*(F)=F$, so that
  $\varphi_\sigma:=\pi(\tilde P_\sigma)\in\Aut(X)$ and
  $\sigma\mapsto\varphi_\sigma$ is a group-theoretic section of
  $\Aut(X)\to P$. Finally $D$ is abelian, being a group of classes of diagonal
  matrices, and it is finite since $\Aut(X)$ is finite by
  \cref{thm:MM}.
\end{proof}

\begin{remark}\label{rem:standing}
  In the applications to forms over finite fields in the following
  sections we take $k=\FF_q$ with $q$ a power of an odd prime
  $p$ and $K=\overline{\FF}_q$. By
  \cref{prop:fermat}, \cref{prop:delsarte} and
  \cref{cor:DrtimesP}, the decomposition $\Aut(X)\cong D\rtimes P$ with
  $D$ abelian and $P\leq S_{n+2}$ holds, with the same groups $D$ and $P$ as
  over $\CC$, under the hypotheses
  $$p\nmid d(d-1)\ \ \text{(Fermat)},\qquad
  p\nmid d(d-1)(d-2)\ \ \text{(Delsarte)},\qquad
  p\nmid d(d-1)(d-2)m\ \ \text{(Klein)}.$$
\end{remark}

\section{Non-abelian cohomology}\label{sec:cohomology}

In this section, we recall notions of non-abelian cohomology that allow us to study forms of an algebraic variety. For detail see \cite{Se}

\begin{definition}{\label{def:cocycle}}
    Let $\Gamma$ be a profinite group. A $\Gamma$-group $A$ is a $\Gamma$-set that has a group structure invariant under $\Gamma$, thus, $^{s} xy = \ ^{s}x \ ^{s}y$ for all $s \in \Gamma$ and for all $x,y \in A$. We always regard $A$ as a discrete topological space and require the action of $\Gamma$ to be continuous, that is, the stabilizer of every $x \in A$ is an open subgroup of $\Gamma$.
\end{definition}
If $A$ is an abelian group, this definition is equivalent to saying that $A$ is a $\Gamma$-module.
\begin{definition}{\label{def:H1_set}}
    A $1$-cocycle (or simply cocycle) is a continuous map $a: \Gamma \to A$ such that $a_{st} = a_s \ ^{s}a_t$ for all $s,t \in \Gamma$. The set of all cocycles is denoted by $Z^1(\Gamma, A)$.
    We say that two cocycles $a$ and $a'$ are cohomologous or equivalent if and only if there exists $b \in A$ such that $a_s ' = b^{-1} \ a_s \ ^{s}b$. Taking quotients by this equivalence relation we get the first cohomology set of $\Gamma$ on $A$, denoted by $H^1(\Gamma, A)$.
\end{definition}
\begin{remark}
    In general, $H^1(\Gamma, A)$ has no group structure since $A$ is not necessarily abelian. However, it is a pointed set: the class of the trivial cocycle is called the neutral element of $H^1(\Gamma, A)$. This allows us to work with exact sequences of pointed sets.
\end{remark}
Given a $\Gamma$-group $A$ and a cocycle, we can construct a new action of $\Gamma$ on $A$ whose cohomology set is in natural bijection with the original one. This process is called twisting.
\begin{definition}{\label{def:twisting}}
    Let $A$ be a $\Gamma$-group and take a cocycle $a = (a_s) \in Z^1(\Gamma, A)$. The twisted action of $\Gamma$ by $a$ is given by $^{s_{a}}x = a_s \ ^{s} x \ a_{s}^{-1}$.
    We denote by $_aA$ the group $A$ endowed with the twisted action; it is again a $\Gamma$-group. Under this action we define cocycles as in \cref{def:H1_set}, we denote by $Z^1(\Gamma , \ _a A)$ the set of these cocycles and, taking quotients, we write $H^1(\Gamma , \ _{a}A)$ for the first cohomology set (using the twisted action).
\end{definition}
\begin{proposition}{\label{prop:twisting_bijection}}
    Let $a \in Z^1(\Gamma, A)$ and consider $_{a}A$. To each cocycle $a' = (a'_s) \in Z^1(\Gamma, \ _aA)$ we associate the map $s \mapsto a'_s \, a_s$, which is a cocycle of $\Gamma$ in $A$; this defines a bijection
    $$ t_a : Z^1(\Gamma, \ _{a}A) \to Z^1(\Gamma,A) $$
    Taking quotients, $t_a$ defines a bijection
    $$ \tau_a : H^1(\Gamma, \ _{a}A) \to H^1(\Gamma,A)  $$
    which sends the neutral element of $H^1(\Gamma, \ _{a}A)$ into the class of $a$.
\end{proposition}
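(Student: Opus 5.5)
We verify four things in turn: $t_a$ lands in $Z^1(\Gamma,A)$, it is a bijection on cocycles, it respects the two equivalence relations, and it sends the trivial cocycle to $a$.

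\textbf{Cocycles go to cocycles.} Let $a'\in Z^1(\Gamma,\ _aA)$, so that
$a'_{st}=a'_s\,{}^{s_a}a'_t=a'_s\,a_s\,{}^{s}a'_t\,a_s^{-1}$. Set $b_s:=a'_sa_s$. Using $a_{st}=a_s\,{}^sa_t$ and the fact that $\Gamma$ acts by group automorphisms,
$$b_{st}=a'_s\,a_s\,{}^{s}a'_t\,a_s^{-1}\,a_s\,{}^sa_t=(a'_sa_s)\,{}^s(a'_ta_t)=b_s\,{}^sb_t .$$
Continuity of $b$ is clear: $a$ and $a'$ are both continuous with values in a discrete set, so they are locally constant, hence so is their pointwise product.

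\textbf{Bijectivity.} The inverse is $b\mapsto(b_sa_s^{-1})_s$. Running the computation above backwards shows that $b_sa_s^{-1}$ satisfies the twisted cocycle relation whenever $b\in Z^1(\Gamma,A)$. So $t_a$ is a bijection $Z^1(\Gamma,\ _aA)\to Z^1(\Gamma,A)$.

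\textbf{Compatibility with the equivalence relations.} This is the only step that needs care, because of the twisted action. Two twisted cocycles $a',a''$ are equivalent if $a''_s=c^{-1}a'_s\,{}^{s_a}c=c^{-1}a'_s\,a_s\,{}^sc\,a_s^{-1}$ for some $c\in A$. Multiplying on the right by $a_s$ gives
$$a''_sa_s=c^{-1}(a'_sa_s)\,{}^sc ,$$
which says exactly that $t_a(a'')$ and $t_a(a')$ are cohomologous in $Z^1(\Gamma,A)$ via the same $c$. Every step in this computation is reversible, so $a'\sim a''$ if and only if $t_a(a')\sim t_a(a'')$. Hence $t_a$ descends to a well-defined injective map $\tau_a$, and $\tau_a$ is surjective because $t_a$ is.

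\textbf{Neutral element.} Finally, $t_a$ sends the trivial cocycle $s\mapsto 1$ to $s\mapsto a_s$. Therefore $\tau_a$ maps the neutral element of $H^1(\Gamma,\ _aA)$ to the class of $a$.
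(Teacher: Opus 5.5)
Your proof is correct: the cocycle identity for $s\mapsto a'_sa_s$, the explicit inverse $b\mapsto(b_sa_s^{-1})_s$, the identity $a''_sa_s=c^{-1}(a'_sa_s)\,{}^sc$ (which matches the two equivalence relations through the same $c$), and the image of the trivial cocycle all check out. The paper gives no argument of its own and only cites Serre (Chapter I, \S5.3, Prop.~35 bis), where the proof is this same routine verification, so your route is essentially the same.
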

\begin{proof}
    See \cite[Chapter I, \S5.3, Prop.~35 bis]{Se}.
\end{proof}
\subsection{Exact sequences}
If $A$ and $B$ are $\Gamma$-groups and $u : A \to B$ is a $\Gamma$-equivariant homomorphism, this homomorphism induces a function
$$ v:H^1(\Gamma, A) \to H^1(\Gamma, B) $$
    Let $\alpha \in H^1(\Gamma, A)$ and let $a$ be a cocycle representing it; then $u$ defines a homomorphism $u' :\ _aA \to \ _{u(a)}B$ which induces a function
    $$ v' : H^1(\Gamma, \ _aA) \to H^1(\Gamma,\ _{u(a)}B)$$
    such that the following diagram is commutative
    $$\begin{tikzcd}
	{H^1(\Gamma, A)} & {H^1(\Gamma, B)} \\
	{H^1(\Gamma, \ _a A)} & {H^1(\Gamma , \ _{u(a)}B)}
	\arrow["v", from=1-1, to=1-2]
	\arrow["{\tau_{a}}"', from=2-1, to=1-1]
	\arrow["{v'}", from=2-1, to=2-2]
	\arrow["{\tau_{u(a)}}"', from=2-2, to=1-2]
    \arrow[phantom, "\circlearrowleft", from=1-1, to=2-2]
\end{tikzcd}$$
where $\tau_a$ and $\tau_{u(a)}$ are bijections as in \cref{prop:twisting_bijection}.

In particular, if $A$ is a normal subgroup of $B$, we obtain the following proposition.
\begin{proposition}{\label{prop:action_fibers}}
    Let $A$ and $B$ be $\Gamma$-groups with $A$ normal in $B$, and let $C = B/A$. Then, the sequence of pointed sets
    $$ 1 \to A^{\Gamma} \to B^{\Gamma} \to C^{\Gamma} \xrightarrow[]{\delta} H^1(\Gamma,A) \to H^1(\Gamma,B) \to H^1(\Gamma,C) $$
    is exact. Moreover, 
    \begin{enumerate}
        \item For $c \in C^{\Gamma}$, we compute $\delta(c)$ in the following way: choose a lift $b \in B$ of $c$, then $a_s = b^{-1} \ ^{s}b$ defines a cocycle in $Z^1(\Gamma,A)$ and we take $\delta(c):=[a_s]$.
        \item $C^{\Gamma}$ acts on $H^1(\Gamma, A)$ as follows: If $c \in C^{\Gamma}$ and $b \in B$ is a lift of $c$, for $a = (a_s) \in Z^1(\Gamma, A)$ we set $c \cdot a_s := b^{-1} \ a_s \ ^{s}b$. (This does not depend on the lift of $c$.)
        \item Two classes in $H^1(\Gamma, A)$ have the same image in $H^1(\Gamma, B)$ if and only if they are in the same $C^{\Gamma}$-orbit.
    \end{enumerate}
\end{proposition}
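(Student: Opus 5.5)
The plan is to prove the three assertions directly from the definitions of cocycles and of the quotient $C=B/A$, following Serre's treatment. Throughout, $\Gamma$ acts on $C$ by ${}^s(bA)={}^sb\,A$, which is well defined because $A$ is a $\Gamma$-stable normal subgroup.

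\textbf{Step 1: the connecting map.} Let $c\in C^\Gamma$ with lift $b\in B$.
\begin{itemize}
\item Since ${}^sb\,A=bA$, the element $a_s=b^{-1}\,{}^sb$ lies in $A$.
\item The cocycle identity is a telescoping computation:
$$a_sa_t'=b^{-1}\,{}^sb\,{}^s(b^{-1}\,{}^tb)=b^{-1}\,{}^{st}b=a_{st}.$$
\item Continuity holds because $b$ has an open stabiliser.
\item Changing the lift to $ba_0$ with $a_0\in A$ replaces $a_s$ by $a_0^{-1}a_s\,{}^sa_0$, which is cohomologous to $a_s$. So $\delta$ is well defined.
\end{itemize}

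\textbf{Step 2: exactness at the $H^0$ terms.} Exactness at $A^\Gamma$, $B^\Gamma$ and $C^\Gamma$ is routine.
\begin{itemize}
\item $\delta(c)$ is trivial if and only if $b^{-1}\,{}^sb=a_0^{-1}\,{}^sa_0$ for some $a_0\in A$. This holds if and only if $ba_0^{-1}\in B^\Gamma$ lifts $c$.
\end{itemize}

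\textbf{Step 3: the action and exactness at $H^1(\Gamma,A)$.} For $c\in C^\Gamma$ with lift $b$, set $(c\cdot a)_s=b^{-1}a_s\,{}^sb$.
\begin{itemize}
\item This lies in $A$: since ${}^sb\in bA$ and $A$ is normal, $b^{-1}a_s\,{}^sb\in b^{-1}A\,bA=A$.
\item It is a cocycle by the same telescoping computation as in Step 1.
\item Replacing $b$ by $ba_0$ changes the cocycle by a coboundary, so the action on classes is independent of the lift.
\item It is a group action on classes because $C^\Gamma$ acts through multiplication of lifts.
\end{itemize}

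\textbf{Step 4: fibres of $H^1(\Gamma,A)\to H^1(\Gamma,B)$ (assertion (3)).}
\begin{itemize}
\item \emph{Orbits lie in one fibre.} By construction $c\cdot a$ is cohomologous to $a$ in $B$, via $b$.
\item \emph{Each fibre is one orbit.} Suppose $a,a'\in Z^1(\Gamma,A)$ satisfy $a'_s=b^{-1}a_s\,{}^sb$ for some $b\in B$. We need $bA\in C^\Gamma$.
\item Projecting to $C$, the $a_s$ and $a'_s$ become trivial, so ${}^sb\,A=bA$ for all $s$. Hence $c=bA$ is $\Gamma$-fixed and $a'=c\cdot a$.
\item Exactness at $H^1(\Gamma,A)$ follows by taking $a$ trivial: the trivial orbit is exactly $\delta(C^\Gamma)$.
\end{itemize}

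\textbf{Step 5: exactness at $H^1(\Gamma,B)$.} One inclusion is clear, since a cocycle with values in $A$ maps to the trivial cocycle in $C$. For the converse, suppose $b_s\in Z^1(\Gamma,B)$ maps to a coboundary in $C$: $b_sA=\bar x^{-1}\,{}^s\bar x$ with $\bar x=xA$.
\begin{itemize}
\item Replace $b_s$ by the cohomologous cocycle $x\,b_s\,{}^sx^{-1}$.
\item This new cocycle takes values in $A$, so its class comes from $H^1(\Gamma,A)$.
\end{itemize}

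\textbf{Main obstacle.} The most delicate point is keeping the non-abelian conventions consistent in Steps 3–4: the left/right placement of $b$, and the fact that $b^{-1}a_s\,{}^sb$ lands in $A$ (which uses normality together with $c$ being $\Gamma$-fixed). I would check these identities carefully, or simply cite \cite[Chapter I, \S5.5, Prop.~38 and Cor.~1]{Se}, where the statement appears verbatim.
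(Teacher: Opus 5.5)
Your argument is correct and is the standard verification from Serre (Chapter I, \S5.5, Props.~38--39), to which the paper's proof simply defers, so the approach is essentially the same. The only checks you leave implicit are, first, that $c\cdot[a]$ does not depend on the representative $a$: if $a'_s=a_0^{-1}a_s\,{}^sa_0$, then $b^{-1}a'_s\,{}^sb$ is cohomologous to $b^{-1}a_s\,{}^sb$ via $b^{-1}a_0b\in A$, by normality of $A$; and second, that the formula $b^{-1}a_s\,{}^sb$ defines a right action, since $(c_1c_2)\cdot a=c_2\cdot(c_1\cdot a)$, which is harmless for the orbit statement in (3).
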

\begin{proof}
    This combines \cite[Chapter I, \S5.5, Prop.38]{Se} (exactness of the sequence), with the description of $\delta$ and some results of \cite[Chapter I, \S5.5, Prop.39]{Se}.
\end{proof}

\begin{corollary}{\label{cor:kernel_as_quotient}}
    The kernel of $H^1(\Gamma ,B) \to H^1(\Gamma, C) $ is identified with the quotient of $H^1(\Gamma ,A)$ by the action of the group $C^{\Gamma}$.
\end{corollary}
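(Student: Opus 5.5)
This is a direct consequence of \cref{prop:action_fibers}; the plan is to identify the kernel, then match the equivalence relations.

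First I identify the kernel. Apply \cref{prop:action_fibers} to $A\trianglelefteq B$ with $C=B/A$. The kernel of $H^1(\Gamma,B)\to H^1(\Gamma,C)$ is the set of classes in $H^1(\Gamma,B)$ mapping to the neutral element. By exactness, this kernel equals the image of the map $i\colon H^1(\Gamma,A)\to H^1(\Gamma,B)$. Here exactness at $H^1(\Gamma,B)$ is exactly the statement that the image of the previous map is the preimage of the base point.

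Next I compare fibers with orbits. By part (3) of \cref{prop:action_fibers}, two classes of $H^1(\Gamma,A)$ have the same image under $i$ if and only if they lie in the same $C^\Gamma$-orbit for the action of part (2). Hence $i$ factors through the orbit set $H^1(\Gamma,A)/C^\Gamma$. The induced map is injective by the ``only if'' direction and surjective onto $\operatorname{im} i=\ker$ by the first step, so it is a bijection $H^1(\Gamma,A)/C^\Gamma\to\ker\bigl(H^1(\Gamma,B)\to H^1(\Gamma,C)\bigr)$.

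For completeness I would verify directly that the formula in part (2) is constant on $C^\Gamma$-orbits under $i$. The cocycle $b^{-1}a_s\,{}^s b$ is cohomologous to $a_s$ in $B$ via the element $b\in B$, so $i(c\cdot[a])=i([a])$. I would also check that it is a cocycle with values in $A$. Because $c$ is $\Gamma$-fixed, $b^{-1}\,{}^s b\in A$, and normality of $A$ then gives $b^{-1}a_s\,{}^s b=(b^{-1}a_sb)(b^{-1}\,{}^sb)\in A$.

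The only real obstacle is the converse: same image implies same orbit. Suppose $b^{-1}a_s\,{}^sb=a'_s$ with $b\in B$. Projecting to $C$ gives $\bar b^{-1}\,{}^s\bar b=1$, since $a_s$ and $a'_s$ lie in $A$. Hence $\bar b\in C^\Gamma$, and $a'$ is $\bar b\cdot a$. This is the content of Serre's Prop.~39, which we take as given.
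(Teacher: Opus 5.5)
Your proof is correct and takes essentially the paper's route. The paper gives no separate proof and treats the statement as immediate from exactness at $H^1(\Gamma,B)$ (the kernel is the image of $H^1(\Gamma,A)$) together with part (3) of \cref{prop:action_fibers} (the fibres of $H^1(\Gamma,A)\to H^1(\Gamma,B)$ are the $C^\Gamma$-orbits), as in Serre's Prop.~39; your extra cocycle-level checks just re-derive part of that cited result, and they are correct.
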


Note that if $A$ is normal in $B$, then conjugation by elements of $B$ preserves $A$; in particular, twisting $B$ by a cocycle $b \in Z^1(\Gamma, B)$ induces a twisted action of $\Gamma$ on $A$.
\begin{proposition}{\label{prop:exact_sequence_twisted}}
    Let $A$ and $B$ be $\Gamma$-groups with $A$ normal in $B$, let $C = B/A$ with canonical projection $\pi \colon B \to C$, and let $b = (b_s)$ be a cocycle in $B$. Then the sequence of $\Gamma$-groups
    $$ 1 \to\ _{b}A \to {_b}B \to _{\pi(b)}C \to 1 $$
    is exact.
\end{proposition}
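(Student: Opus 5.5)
We need to show three things: that ${}_bA$ is a $\Gamma$-group, that ${}_{\pi(b)}C$ is a well-defined $\Gamma$-group, and that the inclusion $\iota\colon A\to B$ and the projection $\pi\colon B\to C$ are $\Gamma$-equivariant for the twisted actions. Exactness as abstract groups then holds automatically, because twisting changes only the $\Gamma$-action and not the underlying groups or maps. So $1\to A\to B\to C\to 1$ stays exact.

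First we check that $\pi(b)=(\pi(b_s))$ is a cocycle in $Z^1(\Gamma,C)$. Since $\pi$ is a $\Gamma$-equivariant homomorphism, $\pi(b_{st})=\pi(b_s\,{}^s b_t)=\pi(b_s)\,{}^s\pi(b_t)$. Continuity follows because $\pi$ is a map of discrete spaces and $b$ is continuous. Hence ${}_{\pi(b)}C$ is a $\Gamma$-group by \cref{def:twisting}. Likewise ${}_bB$ is a $\Gamma$-group.

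Next we show that $A$ is stable under the twisted action ${}^{s_b}x=b_s\,{}^sx\,b_s^{-1}$. For $x\in A$ we have ${}^sx\in A$, because the $\Gamma$-action on $B$ preserves the $\Gamma$-subgroup $A$. Conjugation by $b_s\in B$ preserves $A$ by normality. So the twisted action of $B$ restricts to an action on $A$ by group automorphisms, which is what we mean by ${}_bA$.

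We also need this restricted action to be an honest continuous action. This is inherited from ${}_bB$: the twisted formula satisfies ${}^{(st)_b}x={}^{s_b}({}^{t_b}x)$ because of the cocycle identity, namely
$$b_{st}\,{}^{st}x\,b_{st}^{-1}=b_s\,{}^s b_t\,{}^s({}^tx)\,{}^s b_t^{-1}\,b_s^{-1}.$$
Continuity holds because stabilizers in the twisted action contain the open subgroup of $s$ that fix both $x$ and $b_s$. Here we use that $b$ is locally constant and $\Gamma$ acts continuously.

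Equivariance of the inclusion is then tautological. For the projection we compute
$$\pi({}^{s_b}x)=\pi(b_s)\,{}^s\pi(x)\,\pi(b_s)^{-1}={}^{s_{\pi(b)}}\pi(x),$$
using that $\pi$ is a $\Gamma$-equivariant homomorphism.

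The only point needing care is that the action on $A$ is well defined, i.e.\ that $A$ is preserved by the twisted action; normality handles this. Everything else is formal.
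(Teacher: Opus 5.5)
Your proof is correct and follows the same route as the paper. Normality of $A$ gives stability under the twisted action, equivariance of the inclusion is tautological, equivariance of $\pi$ is the same one-line computation, and exactness is inherited from the untwisted sequence because twisting changes neither the groups nor the maps.

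You also verify that $\pi(b)$ is a cocycle and that the restricted twisted action is a continuous action; the paper takes these for granted from \cref{def:twisting}. One small correction in your continuity step: the open subgroup you want is $\{s\in\Gamma : {}^s x = x,\ b_s = 1\}$, which is open because $b$ is locally constant. The set of $s$ that ``fix'' $b_s$ is not the right set.
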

\begin{proof}
    For $\sigma \in \Gamma$ the twisted action is $^{\sigma_b} a = b_\sigma \ ^{\sigma}a \ b_{\sigma}^{-1} $ for all $a \in A$, and since $A$ is normal we have $b_{\sigma}Ab_{\sigma}^{-1} = A$. And the inclusion $i :_{b}A \hookrightarrow \ _{b}B$ is $\Gamma$-equivariant since the twist in $A$ is the restriction of the twist on $B$, i.e., $i(^{\sigma_{b}}a) = \ ^{\sigma_b}a = ^{\sigma_b}(i(a)) $. In the same way, $\pi : \ _{b}B \to _{\pi(b)}C$ is $\Gamma$-equivariant, since for $b' \in B$, $\pi (^{\sigma _b} b') =\pi ( b_{\sigma} \ ^{\sigma}b' \ b_\sigma ^{-1} ) =  \pi( b_{\sigma})  \pi( ^{\sigma}b') \pi (b_\sigma) ^{-1} = c_{\sigma} \pi (b') c_{\sigma}^{-1} = ^{\sigma_{\pi(b)}} \pi(b') $. Finally, by hypothesis we have that $i$ is injective, $\pi$ surjective and $\operatorname{Im}(i)=\ker(\pi)$, hence the sequence is exact.
\end{proof}
\begin{corollary}{\label{cor:twisted_kernel_quotient}}
    Let $A$ and $B$ be $\Gamma$-groups with $A$ normal in $B$, let $C = B/A$, and let $b = (b_s)$ be a cocycle in $B$. Then the sequence of pointed sets
    $$ 1 \to (_{b}A)^{\Gamma} \to (_{b}B)^{\Gamma} \to ( _{\pi(b)}C)^{\Gamma} \xrightarrow[]{\delta'} H^1(\Gamma, \ _{b}A) \to H^1(\Gamma,\ _{b}B) \to H^1(\Gamma, \ _{\pi(b)}C) $$
    is exact. Moreover, the kernel of $H^1(\Gamma, \ _{b}B) \to H^1(\Gamma,\ _{\pi(b)}C) $ is identified with $H^1(\Gamma, \ _{b}A)/(_{\pi(b)}C)^{\Gamma}$.
\end{corollary}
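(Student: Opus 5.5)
The corollary follows by applying \cref{prop:action_fibers} and \cref{cor:kernel_as_quotient} to the short exact sequence of $\Gamma$-groups of \cref{prop:exact_sequence_twisted}. Everything in \cref{prop:action_fibers} is stated for an arbitrary profinite group $\Gamma$, a $\Gamma$-group $B$, a normal $\Gamma$-stable subgroup $A$, and the quotient $C=B/A$.

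First I will check that the triple $(_bA,\ _bB,\ _{\pi(b)}C)$ satisfies these hypotheses.
\begin{itemize}
\item By \cref{def:twisting}, $_bB$ is a $\Gamma$-group. The twisted action $x\mapsto b_s\,{}^sx\,b_s^{-1}$ is continuous: $b$ is continuous, hence locally constant, and the original action is continuous.
\item $_bA$ is a $\Gamma$-stable subgroup of $_bB$, because $A$ is normal in $B$ and $\Gamma$-stable, as noted before \cref{prop:exact_sequence_twisted}. It is normal in $_bB$, since the underlying group is unchanged by twisting.
\item The quotient $_bB/{}_bA$, with the induced action, is exactly $_{\pi(b)}C$. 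This holds because $\pi$ is a $\Gamma$-equivariant surjection with kernel $_bA$, as proved in \cref{prop:exact_sequence_twisted}.
\end{itemize}
Along the way I will note that $\pi(b)=(\pi(b_s))$ is a cocycle in $C$: applying the homomorphism $\pi$ to $b_{st}=b_s\,{}^sb_t$ and using equivariance gives the cocycle relation.

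Next I will apply \cref{prop:action_fibers} verbatim with $A,B,C$ replaced by $_bA$, $_bB$, $_{\pi(b)}C$. This gives the exactness of the six-term sequence of pointed sets, with connecting map $\delta'$ described as in part (1): lift $c\in(_{\pi(b)}C)^\Gamma$ to some $x\in B$ and take the class of $s\mapsto x^{-1}\,{}^{s_b}x$. It also gives the action of $(_{\pi(b)}C)^\Gamma$ on $H^1(\Gamma,\ _bA)$ from part (2), computed with the twisted action $x^{-1}a_s\,{}^{s_b}x$.

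The kernel statement is then \cref{cor:kernel_as_quotient} for the twisted triple. By exactness, the kernel of $H^1(\Gamma,\ _bB)\to H^1(\Gamma,\ _{\pi(b)}C)$ is the image of $H^1(\Gamma,\ _bA)$. By part (3), two classes have the same image exactly when they lie in the same $(_{\pi(b)}C)^\Gamma$-orbit. So the image map induces a bijection $H^1(\Gamma,\ _bA)/(_{\pi(b)}C)^\Gamma\to\ker$.

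The only point needing care, and the one I expect to be the main obstacle, is confirming that nothing in Serre's Prop.~38–39 uses more than the $\Gamma$-group structure. In particular, the neutral element of $H^1(\Gamma,\ _bB)$ corresponds under $\tau_b$ to $[b]$, not to the trivial class of $H^1(\Gamma,B)$. This is harmless here because the statement is phrased entirely in terms of the twisted groups. So I expect the proof to be a direct citation once the hypotheses are checked, with no further computation.
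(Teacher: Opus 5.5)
Your proposal is correct and follows the same route as the paper: apply \cref{prop:action_fibers} and \cref{cor:kernel_as_quotient} to the twisted short exact sequence $1\to{}_bA\to{}_bB\to{}_{\pi(b)}C\to1$ of \cref{prop:exact_sequence_twisted}. The paper does exactly this in one line. Your extra checks (continuity of the twisted action, $\Gamma$-stability of ${}_bA$, $\pi(b)$ being a cocycle, and the explicit descriptions of $\delta'$ and of the twisted action) are accurate and simply make explicit what the paper leaves implicit.
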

\begin{proof}
    It is a consequence of  \cref{prop:exact_sequence_twisted} using \cref{prop:action_fibers} and  \cref{cor:kernel_as_quotient}. 
\end{proof}

\subsection{Forms of an algebraic variety}

We now apply this machinery to the study of forms of algebraic varieties. First of all, let $k$ be a field and $K$ a Galois extension of $k$.
\begin{definition}
    Let $X$ be an algebraic variety over $K$. A $k$-form of $X$ is an algebraic variety $X'$ defined over $k$ such that $(X')_{K} := X' \times_{\Spec(k)}\Spec(K)$ is $K$-isomorphic to $X$. The set of $k$-forms of $X$ modulo $k$-isomorphism is denoted by $E(K/k, X)$.
\end{definition}
\begin{example}{\label{ex:k_forms}}
We show the notion of a $k$-form with the following examples:
    \begin{enumerate}
        \item $\PP^{n}_{k}$ is a $k$-form of $\PP^n_K$;
        \item $X_1=\PP^{1}_{\RR}$ and $X_2 =\{[x:y:z] \in \PP^2_{\RR} : x^2 + y^2 + z^2 = 0\}$ are two $\RR$-forms (or real forms) of $\PP^1_{\CC}$. In fact, $(X_2)_{\CC}$ is a smooth conic in $\PP^2_{\CC}$, then isomorphic to $\PP^1_{\CC}$, and they are not isomorphic over $\RR$, since $X_1(\RR) \neq \emptyset$ while $X_2 (\RR) = \emptyset$;
        \item The curves $C_1 = \{(x,y) \in \mathbb{A}^2_{\RR} : y^2 - x^3 - x=0\}$ and $C_2 = \{(x,y) \in \mathbb{A}^2_{\RR} : y^2 - x^3 +x=0\}$ are $\RR$-forms of the same complex curve. Moreover, the isomorphism over $\CC$ is defined as
        \begin{align*}
            \varphi: (C_1)_{\CC} &\to (C_2)_{\CC}\\
            (x,y) &\mapsto (-ix , e^{i \frac{\pi}{4}}y)
        \end{align*} 
    \end{enumerate}
\end{example}

As illustrated in \cref{ex:k_forms}, a variety may admit more than one $k$-form up to isomorphism. In general the set $E(K/k,X)$ need not even be finite, as recalled in the introduction. However, for a smooth hypersurface as in \cref{thm:MM}, $\Aut(X)$ is already finite, so $E(K/k,X)$ is as well finite by \cref{prop:bijection_forms_cohomology}. In the rest of the paper we compute this finite number exactly for the Fermat, Klein and Delsarte hypersurface.

Since $\Gal(K/k)$ is a profinite group, it is natural to consider, once a $k$-form $X'$ of $X$ is fixed and $X$ is identified with $(X')_K$, for $\sigma \in \Gal(K/k)$ the action on $\Aut_{K}(X)$ given by $^{\sigma} \phi := (1 \times \sigma) \ \phi \ (1 \times \sigma^{-1} )$ and, in a similar way to \cref{def:H1_set}, construct $H^1(\Gal(K/k), \Aut_K (X))$.
\begin{proposition}{\label{prop:bijection_forms_cohomology}}
    We get an injective function $\theta : E(K/k, X) \to H^1(\Gal(K/k), \Aut_K (X))$.
    If $X$ is quasiprojective, then $\theta$ is bijective.
\end{proposition}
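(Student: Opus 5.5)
We construct $\theta$ directly from descent data, show injectivity by an elementary computation, and obtain surjectivity from Weil's Galois descent for quasiprojective varieties, as in \cite[Chapter III, \S1]{Se}.

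\emph{Construction of $\theta$.} Fix the $k$-form $X_0$ used to define the $\Gamma$-action on $\Aut_K(X)$, where $\Gamma=\Gal(K/k)$, and identify $X=(X_0)_K$. Given a $k$-form $X'$, choose a $K$-isomorphism $f\colon (X')_K\to X$. For $s\in\Gamma$ put $^{s}f:=(1\times s)\,f\,(1\times s^{-1})$, which is again a $K$-isomorphism $(X')_K\to X$, and define
$$a_s:=f\circ{}^{s}f^{-1}\in\Aut_K(X).$$
The cocycle identity $a_{st}=a_s\,{}^{s}a_t$ follows by expanding ${}^{s}a_t={}^{s}f\,{}^{st}f^{-1}$. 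Continuity holds because $f$ is defined over some finite Galois subextension $L/k$, so $a_s=1$ for $s\in\Gal(K/L)$, which is open.

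\emph{Well-definedness.} Replacing $f$ by $f'=b\circ f$ with $b\in\Aut_K(X)$ gives $a'_s=b\,a_s\,{}^{s}b^{-1}$, a cohomologous cocycle. (With the paper's convention $a'_s=b^{-1}a_s{}^{s}b$, replace $b$ by $b^{-1}$.) A $k$-isomorphism $g\colon X''\to X'$ replaces $f$ by $f\circ g_K$, and $g_K$ commutes with the Galois action, so the cocycle is unchanged. Hence $\theta([X'])=[a]$ is well defined.

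\emph{Injectivity.} Suppose $X',X''$ with isomorphisms $f',f''$ give cohomologous cocycles. After adjusting $f''$ by some $b$ as above, we may assume the cocycles are equal. Then $h:=f''^{-1}f'\colon (X')_K\to (X'')_K$ satisfies ${}^{s}h=h$ for all $s$. A $\Gamma$-invariant morphism between base changes of $k$-varieties descends to a $k$-morphism. To see this, work locally on affine opens, where it reduces to $K^\Gamma=k$ together with the isomorphism $(A\otimes_k K)^\Gamma=A$ for a $k$-algebra $A$. The same argument applied to $h^{-1}$ shows $h$ descends to a $k$-isomorphism $X'\cong X''$.

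\emph{Surjectivity.} This is the main obstacle. Given a cocycle $a$, define a twisted semilinear action of $\Gamma$ on $X$ by $s\mapsto a_s\circ(1\times s)$; the cocycle condition makes this an action. By continuity, $a$ factors through a finite quotient $\Gal(L/k)$, so the action is effective Galois descent data relative to the finite extension $L/k$. Because $X$ is quasiprojective, every finite orbit of points lies in an affine open set, and intersecting the translates produces a $\Gamma$-stable affine open cover. On each affine piece $\Spec B$ the invariants $B^\Gamma$ satisfy $B^\Gamma\otimes_k L\cong B$ by Speiser's lemma (Hilbert 90 for $\GL_n$), and these pieces glue to a $k$-variety $X'$ whose base change is $X$ and whose associated cocycle is $a$. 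For this step I would cite Weil descent \cite{BoSe64,Se}, \cite[Chapter III, \S1.3]{Se}, rather than redo the gluing, noting only that quasiprojectivity is exactly what guarantees the stable affine cover.
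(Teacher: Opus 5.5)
Your proposal is correct and follows the same route as the paper, whose proof simply defers to \cite[Chapter III, \S1.3, Prop.~5]{Se}. There, as in your argument, injectivity comes from the elementary fact that a $\Gamma$-invariant $K$-isomorphism between base changes descends to a $k$-isomorphism, and surjectivity comes from Weil descent, with quasiprojectivity supplying a $\Gamma$-stable affine open cover; you have essentially written out the details the paper leaves to that reference.
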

\begin{proof}
    Injectivity is elementary; surjectivity uses Weil's descent method. See \cite[Chapter III, \S1.3, Prop.~5]{Se}.
\end{proof}
Thus, the problem of counting the $k$-forms of an algebraic variety is reduced to understanding this cohomology set, as we will do in the next section.

\section{Computing the forms of classical hypersurfaces}\label{sec:forms}

As we proved in \cref{sec:diff}, there exist hypersurfaces $X$ such that $\Aut(X)= D \rtimes P$, where $D$ is an abelian group of diagonal automorphisms and $P$ is the subgroup of automorphisms corresponding to permutations of monomials, isomorphic to a subgroup of $S_{n+2}$. In this section, we use this decomposition to describe $H^1(\Gal(K/k), \Aut_{K}(X) )$ for this family of hypersurfaces in the cases $k \in \{\RR , \FF_q\}$, where $q$ is a power of some prime $p >2$, and $K$ the algebraic closure of these fields.

In what follows, we denote the profinite group $\Gal(K/k)$ by $\Gamma$ and $\Aut_K(X)$ simply by $\Aut(X)$.

\begin{proposition}{\label{prop:equivariant_section}}
    Let $X = (X')_K \subset \PP^{n+1}_{K}$ be a hypersurface all of whose monomials have coefficient $1$ and such that (cf.\ \cref{cor:DrtimesP}) $\Aut(X) \cong D \rtimes P$ with $D$ diagonal and $P\leq S_{n+2}$ the group of permutations of monomials of $F$. Let $\beta$ be a basis of $V$ defined over $k$ and $\Gamma$ acting on $\Aut(X)$ as $^{s} \phi := (1 \times s) \ \phi \ (1 \times s^{-1} )$. Then
    $\Gamma$ acts trivially on $P$ and the section $s:P\hookrightarrow \Aut(X)$ given by $s(p) = \varphi_p$ is $\Gamma$-equivariant.
\end{proposition}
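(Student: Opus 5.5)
The plan is to make the Galois action on $\Aut(X)$ explicit in coordinates and then read off its effect on permutation matrices. Since $\beta$ is defined over $k$, the ambient space $\PP^{n+1}_K=\PP^{n+1}_k\times_{\Spec k}\Spec K$ carries the $\Gamma$-action $1\times s$. In the coordinates $x_0,\ldots,x_{n+1}$ this action fixes the coordinates and acts on the coefficients, i.e.\ on points it sends $[a_0:\cdots:a_{n+1}]$ to $[s(a_0):\cdots:s(a_{n+1})]$ (up to replacing $s$ by $s^{-1}$ depending on conventions, which is irrelevant here). Since $X=(X')_K$ is stable under $1\times s$, conjugation by $1\times s$ preserves $\Aut(X)$. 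By \cref{thm:MM} every element of $\Aut(X)$ is the class $\varphi$ of some $\tvarphi\in\GL(V)$, so I would first check that for $\varphi=\pi(A)$ with $A=(a_{ij})\in\GL_{n+2}(K)$ one has
\[
{}^{s}\varphi=(1\times s)\circ\varphi\circ(1\times s^{-1})=\pi\big(s(A)\big),\qquad s(A):=(s(a_{ij})).
\]
This is a direct computation on points: applying $s^{-1}$ to the coordinates, then the matrix $A$, then $s$ gives $s(A)$ applied to the original coordinates, because $s$ is a field automorphism and therefore commutes with sums and products. In particular the action of $\Gamma$ on $\Aut(X)$ is the entrywise action on matrix representatives, and this is well defined modulo scalars.

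With this description both claims follow quickly. For $p\in P$, the automorphism $\varphi_p=s(p)$ of \cref{cor:DrtimesP} is represented by a permutation matrix, whose entries lie in $\{0,1\}\subseteq k$. These entries are fixed by every $s\in\Gamma$, so ${}^{s}\varphi_p=\varphi_p$. Hence the section $P\to\Aut(X)$ takes values in $\Aut(X)^\Gamma$. If $\Gamma$ acts trivially on $P$, this is exactly $\Gamma$-equivariance. To see that the action on $P$ is trivial, note that $P$ acquires its $\Gamma$-action as the quotient $\Aut(X)/D$. So it suffices to check two things: that $D$ is $\Gamma$-stable, which holds because the entrywise conjugate of a diagonal matrix is diagonal, and that the projection $\Aut(X)\to P$, $\pi(\tilde\mu\tilde P_\sigma)\mapsto\sigma$, satisfies $s(\tilde\mu\tilde P_\sigma)=s(\tilde\mu)\tilde P_\sigma$. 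The latter holds again because $\tilde P_\sigma$ has entries in $\{0,1\}$. Therefore the image of ${}^{s}\varphi$ in $P$ equals that of $\varphi$, so $\Gamma$ acts trivially on $P$ and $D\rtimes P$ is a semidirect product of $\Gamma$-groups.

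The main obstacle I expect is the first step: pinning down the convention that identifies $(1\times s)\phi(1\times s^{-1})$ with entrywise conjugation of a matrix. That identification is where the hypothesis that $\beta$ is defined over $k$ enters, since the $\Gamma$-action must fix the coordinate functions. I would handle it by working on $K$-points, or dually on the coordinate ring $K[x_0,\ldots,x_{n+1}]=k[x_0,\ldots,x_{n+1}]\otimes_kK$, where $1\times s$ acts on the second factor. In that ring the pullback $\tvarphi^*x_i=\sum_j a_{ij}x_j$ transforms into $\sum_j s(a_{ij})x_j$. Once this is written down, the remaining steps are immediate.
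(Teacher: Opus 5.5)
Your proposal is correct and takes essentially the same route as the paper: permutation matrices have entries in $\{0,1\}\subseteq k$, so they are Galois-fixed; $D$ is $\Gamma$-stable, so $P\cong\Aut(X)/D$ inherits an action for which the projection is equivariant; and that action is trivial. The differences are cosmetic: you justify the entrywise description of ${}^{s}\varphi$, which the paper simply asserts, and you check triviality on $P$ directly from ${}^{s}(\tilde\mu\tilde P_\sigma)={}^{s}\tilde\mu\,\tilde P_\sigma$ rather than via ${}^{\sigma}p=\pi({}^{\sigma}s(p))=\pi(s(p))=p$.
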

\begin{proof}
    By \cref{cor:DrtimesP} $\varphi_p = (\varphi_{ij})$ and $^{\sigma}\varphi = (^{\sigma}\varphi_{ij} )$, but since $\varphi_p$ has entries with values in $\{0,1\} \subset k$ and $k$ is $\Gamma$-invariant we obtain $^{\sigma}\varphi_p = \varphi_p$, i.e., $^{\sigma}s(p)=s(p)$. Now, for the equivariance of $s$ note that if $\mu \in D$ then $^{\sigma}\mu$ is diagonal and $^{\sigma}\mu \in D$, using that $D$ is normal in $\Aut(X)$ we obtain that $\Gamma$ act on $\Aut(X)/D \cong P$, which implies that  $\pi:\Aut(X) \to P$ is $\Gamma$-equivariant. Then, $^{\sigma}p = \pi(^{\sigma} s(p)) = \pi(s(p)) = p$ gives us $^{\sigma}s(p) = s(^{\sigma}p)$ for all $p \in P$, which shows that $s$ is $\Gamma$-equivariant.
\end{proof}

\begin{theorem}[Theorem A]\label{thm:A}
Let $X=V(F)\subset\PP^{n+1}_{K}$ be a smooth hypersurface of dimension $n\geq1$ and degree $d\geq3$, with $(n,d)\notin\{(1,3),(2,4)\}$. Let $\Aut(X)\cong D\rtimes P$ with $D$ the subgroup of diagonal automorphisms and $P\leq S_{n+2}$ the group of permutations of the monomials of $F$, all of which have coefficient $1$ (cf.\ \cref{cor:DrtimesP}), with $X$ defined over $k$ and $\Gamma$ acting as in \cref{prop:equivariant_section}. Then
$$ |H^1(\Gamma,\Aut(X))| = \sum_{[c]\in H^1(\Gamma,P)} |H^1(\Gamma,\ _{c}D)/C_P(c)|. $$
\end{theorem}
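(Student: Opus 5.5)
We will fibre $H^1(\Gamma,\Aut(X))$ over $H^1(\Gamma,P)$ using the exact sequence $1\to D\to\Aut(X)\to P\to1$, where $\pi\colon\Aut(X)\to P$ is the projection. By \cref{prop:equivariant_section}, $\Gamma$ acts trivially on $P$, so $H^1(\Gamma,P)$ is the set of continuous homomorphisms $\Gamma\to P$ up to $P$-conjugacy. The section $s$ is $\Gamma$-equivariant, hence so is the induced map $s_*\colon H^1(\Gamma,P)\to H^1(\Gamma,\Aut(X))$. Since $\pi_*\circ s_*=\mathrm{id}$, the map $\pi_*$ is surjective. It follows that
$$|H^1(\Gamma,\Aut(X))|=\sum_{[c]\in H^1(\Gamma,P)}|\pi_*^{-1}([c])|,$$
and it remains to show that each fibre has cardinality $|H^1(\Gamma,{}_cD)/C_P(c)|$.

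To compute the fibre over $[c]$, we represent the class by the cocycle $b:=s\circ c\in Z^1(\Gamma,\Aut(X))$, which satisfies $\pi(b)=c$. Twisting by $b$, \cref{prop:twisting_bijection} gives a bijection $\tau_b\colon H^1(\Gamma,{}_b\Aut(X))\to H^1(\Gamma,\Aut(X))$. The commutative square preceding \cref{prop:action_fibers} shows that $\tau_b$ identifies the kernel of $H^1(\Gamma,{}_b\Aut(X))\to H^1(\Gamma,{}_cP)$ with $\pi_*^{-1}([c])$, because $\tau_c$ sends the neutral class to $[c]$ and is a bijection. By \cref{prop:exact_sequence_twisted} and \cref{cor:twisted_kernel_quotient}, this kernel is $H^1(\Gamma,{}_bD)/({}_cP)^\Gamma$. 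Since $D$ is normal, the twisted action on $D$ is ${}^{\sigma}\mu\mapsto s(c_\sigma)\,{}^\sigma\mu\,s(c_\sigma)^{-1}$, which is exactly the action denoted ${}_cD$ in the statement.

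Finally we identify the group acting. Because $\Gamma$ acts trivially on $P$, the twisted action on ${}_cP$ is $x\mapsto c_\sigma x c_\sigma^{-1}$. Hence $({}_cP)^\Gamma$ consists of the elements of $P$ commuting with every $c_\sigma$, that is, the centralizer $C_P(c)$ of the image of $c$.

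The main obstacle is making the action of $C_P(c)$ on $H^1(\Gamma,{}_cD)$ explicit and checking that it agrees with the one in \cref{prop:action_fibers}. For $g\in C_P(c)$ we take the lift $s(g)\in{}_b\Aut(X)$; the action on a cocycle $a$ is then $a_\sigma\mapsto s(g)^{-1}a_\sigma\,{}^{\sigma_b}s(g)$. Using ${}^\sigma s(g)=s(g)$ and the fact that $g$ commutes with $c_\sigma$, we get ${}^{\sigma_b}s(g)=s(c_\sigma g c_\sigma^{-1})=s(g)$. So the action is plain conjugation $a_\sigma\mapsto s(g)^{-1}a_\sigma s(g)$, and these conjugations preserve ${}_cD$-cocycles. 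Summing the fibre cardinalities over $H^1(\Gamma,P)$ gives the formula. One should also check that the count does not depend on the representative $c$: replacing $c$ by a conjugate $hch^{-1}$ conjugates everything by $s(h)$, which is equivariant and gives compatible bijections. No smoothness or degree hypotheses enter beyond ensuring $\Aut(X)=D\rtimes P$ is finite.
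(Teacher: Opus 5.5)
Your proposal is correct and follows essentially the same route as the paper. You fibre $H^1(\Gamma,\Aut(X))$ over $H^1(\Gamma,P)$ via the $\Gamma$-equivariant section $s$, twist by $s(c)$, and apply \cref{cor:twisted_kernel_quotient} with $({}_cP)^\Gamma=C_P(c)$; your extra checks (the $C_P(c)$-action reducing to conjugation by $s(g)$, and independence of the representative $c$) are consistent with the explicit action the paper records at the end of its proof.
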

\begin{proof}
    By \cref{prop:equivariant_section} the action of $\Gamma$ on $P$ is trivial and $s$ is $\Gamma$-equivariant. Hence the projection $\pi : \Aut(X) \to P$ induces a surjective function $\pi_* : H^1(\Gamma, \Aut(X)) \to H^1(\Gamma, P)$ which admits a $\Gamma$-equivariant section $s_*$ induced by $s$. Thus, $H^1(\Gamma, \Aut(X))$ is the disjoint union of $\pi_* ^{-1}([c])$, with $[c] \in H^1(\Gamma, P)$. The section $s_*$ allows us to lift $[c] \in H^1(\Gamma, P)$ to $s_*([c]) \in H^1(\Gamma, \Aut(X))$, therefore, we can twist $H^1(\Gamma, P)$ and $H^1(\Gamma  , \Aut(X))$ by $c$ and $s(c)$, respectively. By \cref{prop:twisting_bijection} we have the bijections $\tau_{s(c)}$ and $\tau_{c}$ explained in the following diagram
    $$\begin{tikzcd}
	   {H^1(\Gamma, D)} & {H^1(\Gamma, \Aut(X))} & {H^1(\Gamma, P)} \\
	   {H^1(\Gamma, \ _{c}D)} & {H^1(\Gamma, \ _{s(c)} \Aut(X))} & {H^1(\Gamma, \ _{c}P)}
	   \arrow[from=1-1, to=1-2]
	   \arrow["\pi_{*}", from=1-2, to=1-3]
	   \arrow["{s_*}"', shift left, curve={height=18pt}, from=1-3, to=1-2]
	   \arrow[from=2-1, to=2-2]
	   \arrow["{\tau_{s(c)}}"', from=2-2, to=1-2]
	   \arrow["{\pi_{*}'}", from=2-2, to=2-3]
	   \arrow["{\tau_{c}}"', from=2-3, to=1-3]
        \arrow[phantom, "\circlearrowleft", from=1-2, to=2-3]
    \end{tikzcd}$$
    Applying \cref{cor:twisted_kernel_quotient} to $A=D$, $B=\Aut(X)$, $C=P$ and the cocycle $b=s(c)$ in $\Aut(X)$ with $\pi(s(c))=c$, we obtain
    $$ \pi_{*}^{-1}([c]) \cong \ker\big(H^1(\Gamma, \ _{s(c)}\Aut(X)) \to H^1 (\Gamma, \ _{c}P)\big)  \cong H^1(\Gamma, \ _{c}D)/(\ _{c}P)^{\Gamma} $$
    Since $\Gamma$ acts trivially on $P$, we have $^{\sigma_{c}}  p = c \ ^{\sigma}p  \ c^{-1} = c \ p \ c^{-1}$ for $p \in {}_{c}P$, so $(\ _{c}P)^{\Gamma} = \{ p \in \ _{c}P : c \ p \ c^{-1} = p\} = C_P (c)$.
    Thus, it follows that $|H^1(\Gamma, \Aut(X)) | = \displaystyle \sum_{[c] \in H^1(\Gamma , P )} |H^1(\Gamma, \ _{c}D) / C_P (c) |$, where $p \in C_P(c)$ acts on $H^1(\Gamma, \ _{c}D)$ by $p \cdot [\mu] = [s(p)^{-1} \  \mu_\sigma \ ^{\sigma_{s(c)}} s(p)]$, using the lift $s(p)$ of $p$.
\end{proof}

\subsection{\texorpdfstring{$\RR$-forms}{R-forms}}

In this context, $k=\RR$ and $K=\CC$, so an $\RR$-form (or real form) of $X$ is an algebraic variety over $\RR$ whose complexification is $\CC$-isomorphic to $X$.
Let $\sigma$ be the complex conjugation in $\CC$; if $X$ is defined over $\RR$, i.e., $X=(X')_{\CC}$, then $1 \times \sigma$ is an antiregular involution of $X$ known as a real structure.
\begin{definition}
    A real structure $\rho$ on $X$ is an antiregular involution $\rho : X \to X$, that is, a morphism of schemes such that $\rho^2 = \Id$ and that makes the following diagram commutative
    $$\begin{tikzcd}
	   X & X \\
	   {\Spec(\CC)} & {\Spec(\CC)}
	   \arrow["\rho", from=1-1, to=1-2]
	   \arrow[from=1-1, to=2-1]
	   \arrow[from=1-2, to=2-2]
	   \arrow["{\Spec(z \mapsto \overline{z})}", from=2-1, to=2-2]
    \end{tikzcd}$$
    We say that two real structures on $X$ are equivalent if and only if there exists $\phi \in \Aut(X)$ such that $\rho' =  \phi^{-1} \rho \phi$. The real locus of $\rho$ is the set $X^{\rho}$ of fixed points of $\rho$.
\end{definition}
\begin{example}
    The complex conjugation gives us the following examples:
    \begin{enumerate}
        \item On $\PP^1_{\CC}$ we can define two real structures
        $$\begin{aligned}
            \tilde \rho : \PP^{1}_{\CC} &\to \PP^{1}_{\CC} \\
            [x : y] &\mapsto [\overline{x} : \overline{y}]
        \end{aligned}
        \qquad\text{and}\qquad
        \begin{aligned}
            \tilde \rho' : \PP^{1}_{\CC} &\to \PP^{1}_{\CC} \\
            [x:y] &\mapsto [- \overline{y} : \overline{x}]
        \end{aligned}$$
        They are not equivalent since $(\PP^1_{\CC})^{\tilde \rho} \neq \emptyset$ and $(\PP^1_{\CC})^{\tilde \rho'} = \emptyset$. Note that this is similar to the case of real forms of $\PP^1_{\CC}$ shown in \cref{ex:k_forms}.
        \item We can generalize to $\PP^{n+1}_{\CC}$ the real structure defined above on $\PP^{1}_{\CC}$ 
        \begin{align*}
            \tilde \rho : \PP^{n+1}_{\CC} &\to \PP^{n+1}_{\CC} \\
            [x_0 : \cdots : x_{n+1}] &\mapsto [\overline{x}_0 : \cdots : \overline{x}_{n+1}]
        \end{align*}
        \item If $n$ is even, then we have an additional real structure on $\PP^{n+1}_{\CC}$, given by
        \begin{align*}
            \tilde \rho' : \PP^{n+1}_{\CC} &\to \PP^{n+1}_{\CC} \\
            [x_0 : x_1: \cdots : x_n: x_{n+1}] &\mapsto [ -\overline{x}_1 : \overline{x}_0  : \cdots :  - \overline{x}_{n+1} : \overline{x}_{n}]
        \end{align*}
    \end{enumerate}
\end{example}
In general, we can take the real structure $\tilde \rho$ on $\PP^{n+1}_{\CC}$ and restrict it to $X$ to obtain a real structure $\rho$ on $X$.
With this language, we can state \cref{prop:bijection_forms_cohomology} as the following theorem
\begin{theorem}[\cite{BoSe64}] 
    Let $X$ be a complex quasiprojective variety with a real structure $\rho$. There is a natural bijection between the set of $\RR$-isomorphism classes of real forms of $X$ and $H^{1} (\Gamma , \Aut_{\CC}(X))$, where the non-trivial element $\sigma$ of $\Gamma = \{1 , \sigma\}$ acts on $\Aut_{\CC}(X)$ by $^\sigma \varphi = \rho  \varphi \rho^{-1}$.
\end{theorem}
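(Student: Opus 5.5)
The plan is to deduce the statement from \cref{prop:bijection_forms_cohomology}, specialised to $k=\RR$, $K=\CC$, $\Gamma=\{1,\sigma\}$. Throughout I identify a real form with the real structure it induces. First I would check that the two descriptions of the $\Gamma$-action agree. Fix the real form $X'$ with $(X')_\CC\cong X$ whose real structure is $\rho=1\times\sigma$. The action used in \cref{prop:bijection_forms_cohomology} is ${}^\sigma\phi=(1\times\sigma)\,\phi\,(1\times\sigma^{-1})$, which is exactly $\rho\phi\rho^{-1}$. Conversely, given an abstract real structure $\rho$ on a quasiprojective $X$, I need a real variety $X'$ with $(X')_\CC\cong X$ and $\rho=1\times\sigma$. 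I would take $X'=X/\langle\rho\rangle$, viewed as a scheme over $\Spec(\RR)=\Spec(\CC)/\langle\sigma\rangle$.

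Next I would make the cohomology explicit. Since $\Gamma$ has order $2$, a cocycle is determined by $a:=a_\sigma\in\Aut(X)$. The condition $a_{\sigma^2}=a_\sigma\,{}^\sigma a_\sigma$ becomes $a\rho a\rho^{-1}=1$, i.e.\ $(a\rho)^2=\Id$. Since $a$ is $\CC$-linear, $a\rho$ is again antiregular, so $a\mapsto\rho_a:=a\rho$ is a bijection between $Z^1(\Gamma,\Aut(X))$ and the set of real structures on $X$. Two cocycles are cohomologous when $a'=b^{-1}a\rho b\rho^{-1}$ for some $b\in\Aut(X)$. This says exactly $\rho_{a'}=b^{-1}\rho_a b$, i.e.\ $\rho_a$ and $\rho_{a'}$ are equivalent real structures. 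Hence $H^1(\Gamma,\Aut_\CC(X))$ is in natural bijection with equivalence classes of real structures. Under this bijection the neutral class goes to $[\rho]$, consistent with \cref{prop:twisting_bijection}.

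It remains to match equivalence classes of real structures with $\RR$-isomorphism classes of real forms. In one direction, a real form $X''$ together with a $\CC$-isomorphism $f\colon (X'')_\CC\to X$ gives the real structure $f(1\times\sigma)f^{-1}$ on $X$. Changing $f$, or replacing $X''$ by an $\RR$-isomorphic form, changes this real structure only by conjugation by an element of $\Aut(X)$. In the other direction, a real structure $\rho'$ produces the quotient $X/\langle\rho'\rangle$. One then checks that this gives a real form with base change $X$, and that conjugate real structures give $\RR$-isomorphic quotients, via the induced isomorphism of quotients. These two constructions are inverse to each other. This is precisely the injectivity and surjectivity asserted in \cref{prop:bijection_forms_cohomology}, now written in the language of real structures.

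The main obstacle is the existence of the quotient $X/\langle\rho'\rangle$ as an $\RR$-variety, i.e.\ the effectivity of Weil descent. This is where quasiprojectivity is used. I would argue that every pair $\{x,\rho'(x)\}$ lies in a $\rho'$-stable affine open subset. To get one, take an ample line bundle $L$ on $X$ and replace it by the $\rho'$-invariant ample bundle $L\otimes\overline{\rho'^*L}$. A section of a power of this bundle nonvanishing at both points, symmetrised under $\rho'$, cuts out the required affine open. The quotient is then glued from $\Spec$ of the $\rho'$-invariants of these affine pieces. For the remaining details I would defer to \cite{BoSe64} and \cite[Chapter III, \S1.3]{Se}.
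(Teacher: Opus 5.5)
Your proposal is correct and follows the paper's route. The paper gives no separate argument: it identifies the statement with \cref{prop:bijection_forms_cohomology} for $k=\RR$, $K=\CC$, rewritten in terms of real structures and resting on the Weil descent of \cite{BoSe64,Se}, and your dictionary $a\mapsto a\rho$ between cocycles and real structures, under which coboundaries become conjugation by $\Aut(X)$, is exactly that translation.

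One small correction to your descent sketch: on a quasiprojective but non-projective $X$, a section of a power of an ample bundle that is nonvanishing at $x$ and $\rho'(x)$ need not cut out an affine open (think of $\PP^2$ minus a point). It is cleaner to take any affine open $U$ containing $x$ and $\rho'(x)$, which exists by quasiprojectivity, and use $U\cap\rho'(U)$, which is affine by separatedness and $\rho'$-stable.
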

Since $\Aut(X) \cong D \rtimes P$, the homomorphisms of $\Gamma$-groups $D \hookrightarrow \Aut(X)$ and $\Aut(X) \to P$ allow us to use the induced functions to study $H^1(\Gamma, \Aut(X))$ in terms of $H^1(\Gamma, D)$ and $H^1(\Gamma, P)$, as in \cref{thm:A}.

\begin{lemma}{\label{prop:action_of_Gamma}}
    Let $X = V(F) \subset \PP^{n+1}_{\CC}$ be a complex hypersurface such that $\Aut(X) \cong D \rtimes P $, and $\rho $ the real structure on $X$ induced by complex conjugation. If $\mu = \diag \left( \mu_{0} , \ldots, \mu_{n+1} \right) \in D$ (whose entries may be taken to be roots of unity, since $D$ is finite) and $p \in P$, then
    \begin{enumerate}
        \item $^{\sigma} \mu= \rho \mu \rho  = \mu^{-1}$;
        \item $^{\sigma} p= \rho p \rho = p$;
        \item $p \ \mu \ p^{-1} = \diag \left(\mu_{p^{-1}(0)} , \ldots, \mu_{p^{-1}(n+1)} \right) $
    \end{enumerate}
\end{lemma}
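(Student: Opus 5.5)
The plan is to compute directly with matrix representatives in $\GL_{n+2}(\CC)$. The real structure $\rho$ is the restriction of $\tilde\rho\colon[x_0:\cdots:x_{n+1}]\mapsto[\overline{x}_0:\cdots:\overline{x}_{n+1}]$, and $\tilde\rho^{-1}=\tilde\rho$. For any $\varphi\in\PGL_{n+2}(\CC)$ with lift $\tvarphi=(\tvarphi_{ij})$, one checks on points that
$$\rho\varphi\rho^{-1}[x]=\overline{\tvarphi\,\overline{x}}=[\overline{\tvarphi}\,x].$$
Hence ${}^{\sigma}\varphi$ is the class of the entrywise complex conjugate matrix $\overline{\tvarphi}$. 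This is the same description already used in the proof of \cref{prop:equivariant_section}, where ${}^{\sigma}\varphi=({}^{\sigma}\varphi_{ij})$. Everything then reduces to computing complex conjugates of explicit matrices.

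For (1), take $\mu\in D$ with a diagonal representative $\tilde\mu$. Since $D$ is finite, $\mu$ has finite order $r$, so $\tilde\mu^{r}$ is a scalar matrix $c\,\Id$. Rescaling $\tilde\mu$ by an $r$-th root of $c^{-1}$ gives $\tilde\mu^{r}=\Id$, so every entry $\mu_i$ is a root of unity. This justifies the parenthetical remark in the statement. Then $\overline{\mu_i}=\mu_i^{-1}$, so $\overline{\tilde\mu}=\tilde\mu^{-1}$, and therefore ${}^{\sigma}\mu=\mu^{-1}$. The only point to flag is that the conclusion is independent of the chosen lift: conjugating a different lift $\lambda\tilde\mu$ gives $\overline{\lambda}\,\tilde\mu^{-1}$, which has the same class in $\PGL$.

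For (2), the representative of $p$ given by \cref{cor:DrtimesP} is a permutation matrix with entries in $\{0,1\}\subset\RR$. Its conjugate is itself, so ${}^{\sigma}p=p$. This is exactly the argument of \cref{prop:equivariant_section}.

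For (3), I will use the convention for the permutation matrix $\tilde P_p$ that makes the formula hold, namely $\tilde P_p e_i=e_{p(i)}$. Then
$$\tilde P_p\tilde\mu\tilde P_p^{-1}e_j=\tilde P_p\tilde\mu e_{p^{-1}(j)}=\mu_{p^{-1}(j)}e_j,$$
which gives $\diag(\mu_{p^{-1}(0)},\ldots,\mu_{p^{-1}(n+1)})$.

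The main obstacle is bookkeeping of conventions rather than mathematics. One must check that the action ${}^{\sigma}\varphi=(1\times\sigma)\varphi(1\times\sigma^{-1})$ really is entrywise conjugation, and that the permutation-matrix convention matches the indexing in (3). If the opposite convention were used, (3) would instead read $\mu_{p(i)}$. I will fix the convention explicitly before computing.
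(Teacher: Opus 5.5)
Your proposal is correct and follows essentially the same route as the paper: a direct computation of $\rho\mu\rho$, $\rho p\rho$ and $p\mu p^{-1}$ on homogeneous coordinates, equivalently entrywise conjugation of lifts, with (2) deduced from the permutation matrix having entries in $\{0,1\}$. Your convention $\tilde P_p e_i=e_{p(i)}$ is exactly the one implicit in the paper's step $p^{-1}\cdot[x_0:\cdots:x_{n+1}]=[x_{p(0)}:\cdots:x_{p(n+1)}]$, and your rescaling argument and lift-independence check make explicit points the paper only asserts.
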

\begin{proof}
    We see the action on homogeneous coordinates:
    \begin{enumerate}
        \item Since $\mu_j$ are roots of unity, then $\overline{\mu_j} = \mu_j^{-1}$. Thus,
        \begin{align*}
            \rho (\mu (\rho \cdot [x_0 :  \cdots :x_{n+1}])) &= \rho (\mu \cdot[\overline{x_0} : \cdots : \overline{x_{n+1}}] ) \\
            &= \rho \cdot [\mu_0 \overline{x_0} : \cdots :\mu_{n+1} \overline{x_{n+1}}] \\
            &= [\mu_0^{-1} x_0  : \cdots : \mu_{n+1}^{-1} x_{n+1}]
        \end{align*}
        This is equivalent to act by $\mu^{-1}$.
        \item This is a consequence of \cref{prop:equivariant_section} with $k=\RR$.
        \item We see that  
        \begin{align*}
            p \mu ( p^{-1} \cdot [x_0 : \cdots : x_{n+1}] ) &= p \mu \cdot [x_{p(0)} : \cdots : x_{p(n+1)}] \\
            &= p \cdot [\mu_0 x_{p(0)} : \cdots : \mu_{n+1} x_{p(n+1)}] \\
            &=[\mu_{p^{-1}(0)} x_{0} : \cdots : \mu_{p^{-1}(n+1)} x_{n+1}]
        \end{align*}
        This is equivalent to acting by the automorphism $\diag \left(\mu_{p^{-1}(0)} , \ldots, \mu_{p^{-1}(n+1)} \right) \in D$.
    \end{enumerate}
\end{proof}

\begin{proposition}{\label{prop:H1(D)_descomposition}}
    The set $H^1(\Gamma,D)$ is identified with $(\ZZ/2\ZZ)^r$, for some $r \in \NN$.
\end{proposition}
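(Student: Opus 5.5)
The plan is to use that $\Gamma=\{1,\sigma\}$ is cyclic of order two, so that $H^1(\Gamma,D)$ can be computed by the standard Tate-cohomology description for an abelian $\Gamma$-module. By \cref{prop:action_of_Gamma}(1), $\sigma$ acts on $D$ by inversion, $^{\sigma}\mu=\mu^{-1}$. A cocycle is determined by $a=a_\sigma\in D$, subject to the condition coming from $a_{\sigma^2}=a_1=1$, namely $a\,{}^{\sigma}a=1$. Since ${}^\sigma a=a^{-1}$, this holds automatically, so $Z^1(\Gamma,D)=D$.

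Next I compute the coboundaries. Two cocycles $a,a'$ are cohomologous iff $a'=b^{-1}a\,{}^{\sigma}b=a\,b^{-2}$ for some $b\in D$, using commutativity of $D$. Hence
$$H^1(\Gamma,D)\cong D/D^2 .$$

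Finally, $D$ is a finite abelian group by \cref{cor:DrtimesP}, so $D\cong\bigoplus_i \ZZ/n_i\ZZ$. Then $D/D^2\cong\bigoplus_i \ZZ/\gcd(n_i,2)\ZZ\cong(\ZZ/2\ZZ)^r$, where $r$ is the number of cyclic factors of even order, i.e.\ the $2$-rank of $D$. The identification is one of groups, since for abelian $D$ the set $H^1$ carries its natural group structure.

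The only delicate point, which I expect to be the main (mild) obstacle, is justifying that the action is really inversion on $D\subseteq\PGL$, rather than on the lifts. Every class in $D$ has a diagonal representative whose entries are roots of unity, since $D$ is finite and we may rescale. Complex conjugation of that representative gives the inverse matrix, and this is well defined modulo scalars. This is exactly \cref{prop:action_of_Gamma}(1), which I will invoke directly.
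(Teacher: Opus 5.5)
Your proposal is correct and follows the paper's proof essentially step for step. Inversion gives $Z^1(\Gamma,D)=D$, the coboundary relation $a'=ab^{-2}$ gives $H^1(\Gamma,D)\cong D/D^2$, and the structure theorem yields $(\ZZ/2\ZZ)^r$ with $r$ the number of even invariant factors. Your remark about choosing root-of-unity representatives is exactly the justification the paper gives in its lemma on the action of $\Gamma$.
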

\begin{proof}
    By \cref{prop:action_of_Gamma}, $\Gamma$ acts on $D$ by inversion. Thus, the cocycle condition is $a_{\sigma} a_{\sigma}^{-1} = 1$. As this holds for all $a_{\sigma} \in D$ we get that $Z^1(\Gamma, D)$ is identified with $D$. Now, $a'$ and $a$ are equivalent if and only if $a' = b^{-2}a$ for some $b \in D$. Thus, $H^1(\Gamma, D) \cong D/D^2$. Using the fact that $D$ is abelian and finite, then by the classification of finite abelian groups we have that $D \cong \displaystyle \bigoplus_{i=1}^{k} \ZZ / n_i\ZZ$ such that $n_{1}|n_{2} | \cdots |n_{k}$, then, 
    $$D^2 =\bigoplus_{i=1}^{k} 2( \ZZ/n_i\ZZ) \cong \bigoplus_{i=1}^{k} (\ZZ/n_i\ZZ) / (\ZZ /\gcd(2 , n_i)\ZZ) \cong \bigoplus_{i=1}^{k} \left(\ZZ /\frac{n_i}{\gcd(2,n_i)}\ZZ \right)$$ 
    Then, $H^1(\Gamma ,D) \cong D/D^2 \cong \displaystyle \bigoplus_{i=1}^{k} \ZZ / \gcd(2,n_i)\ZZ \cong  (\ZZ/2\ZZ)^{r}$ where $r$ is the number of even invariant factors $n_i$.
\end{proof}

\begin{proposition}{\label{prop:H1(P)_involutions}}
    The set $H^1(\Gamma,P)$ is identified with the set of elements of order at most $2$ of $P$ modulo conjugation.
\end{proposition}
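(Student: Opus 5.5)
Here $\Gamma=\{1,\sigma\}$ with $\sigma$ complex conjugation, and by \cref{prop:action_of_Gamma}(2) (equivalently \cref{prop:equivariant_section}) $\Gamma$ acts trivially on $P$. The plan is to unwind \cref{def:H1_set} directly: first describe $Z^1(\Gamma,P)$, then the cohomology relation.

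\emph{Cocycles.} A cocycle $a\colon\Gamma\to P$ is determined by its values $a_1$ and $a_\sigma$. The relation $a_{1\cdot1}=a_1\,{}^1a_1$ forces $a_1=1$. Since the action is trivial, the relation for $s=t=\sigma$ reads
$$1=a_{\sigma^2}=a_\sigma\,{}^{\sigma}a_\sigma=a_\sigma^2 .$$
The remaining relations, for the pairs $(1,\sigma)$ and $(\sigma,1)$, hold automatically. Conversely, any $c\in P$ with $c^2=1$ defines a cocycle by $a_1=1$, $a_\sigma=c$. Continuity is automatic because $\Gamma$ is finite and discrete. Hence $a\mapsto a_\sigma$ is a bijection
$$Z^1(\Gamma,P)\;\longrightarrow\;\{c\in P\mid c^2=1\},$$
the set of elements of order at most $2$.

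\emph{Cohomology relation.} Two cocycles $a,a'$ are cohomologous iff $a'_\sigma=b^{-1}a_\sigma\,{}^{\sigma}b$ for some $b\in P$. Since $\Gamma$ acts trivially, ${}^{\sigma}b=b$, so the condition becomes $a'_\sigma=b^{-1}a_\sigma b$. The condition at $s=1$ is $1=b^{-1}b$, which is vacuous. Thus the equivalence relation on $Z^1(\Gamma,P)$ is exactly conjugacy in $P$. Passing to quotients gives the claimed identification of $H^1(\Gamma,P)$ with the set of involutions of $P$ together with the identity, modulo conjugation.

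\emph{Main point.} There is no real obstacle here. The only point needing care is the triviality of the $\Gamma$-action on $P$, which rests on the permutation matrices having entries in $\{0,1\}\subset\RR$. This is already established in \cref{prop:equivariant_section}.
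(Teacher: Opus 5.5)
Your proposal is correct and follows the same route as the paper. The triviality of the $\Gamma$-action on $P$ reduces the cocycle condition to $a_\sigma^2=1$ and the cohomology relation to conjugation in $P$; you simply spell out the routine checks ($a_1=1$, the mixed relations, continuity) that the paper leaves implicit.
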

\begin{proof}
    By \cref{prop:action_of_Gamma}, $\Gamma$ acts trivially on $P$, then the cocycle condition is $a_\sigma^{2} = 1$. Thus, the elements of $Z^1(\Gamma, P ) $ correspond to elements of order at most $2$ in $P$. Now, two cocycles are equivalent if and only if they are conjugate by an element of $P$. This shows that $H^1(\Gamma , P)$ corresponds to elements of order at most $2$ of $P$ up to conjugation in $P$; for $P=S_{n+2}$, two such elements are conjugate if and only if they have the same cycle type.
\end{proof}
In this way, \cref{thm:A} is equivalent to 
\begin{theorem}\label{theorem A}
    Let $X = V(F) \subset \PP^{n+1}_{\CC}$ be a complex hypersurface of dimension $n\geq 1$ and degree $d \geq 3$, with $(n,d) \not \in \{(1,3),(2,4)\}$. Let $\Aut(X) \cong D \rtimes P$, with $D$ abelian and $P\leq S_{n+2}$. Then, the number of $\RR$-forms of $X$ is given by
    $$ |H^1(\Gamma, \Aut(X))| = \sum_{[p_l] \in H^1(\Gamma , P )} |H^1(\Gamma, \ _{p_l}D) / C_P (p_l) |$$
    where $C_P (p_l)$ is the centralizer of the element $p_l$ of order at most $2$ with $l$ fixed points, and the set  $H^1(\Gamma, \ _{p_l}D) / C_P (p_l) $ contains of orbits of the action of $C_P(p_l)$ on $H^1(\Gamma, \ _{p_l}D)$.
\end{theorem}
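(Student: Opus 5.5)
This statement is the specialization of \cref{thm:A} to $k=\RR$, $K=\CC$, $\Gamma=\{1,\sigma\}$, so the plan is to instantiate that theorem and translate each ingredient with the real-specific results just proved. I will check three things: the hypotheses of \cref{thm:A} hold here, the index set $H^1(\Gamma,P)$ has the stated description, and the summands become orbit sets of $C_P(p_l)$ acting on $H^1(\Gamma,\ _{p_l}D)$.

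First, the hypotheses. As in \cref{thm:A}, $X=V(F)$ is assumed to have all coefficients equal to $1$, and $D$ is the diagonal part and $P$ the permutations of monomials, as in \cref{cor:DrtimesP}. Then $X$ is defined over $\RR$, and complex conjugation $\rho$ gives the $\Gamma$-action. The basis $\beta$ is real, so \cref{prop:equivariant_section} applies: $\Gamma$ acts trivially on $P$ and the section $s$ by permutation matrices is $\Gamma$-equivariant. This is also recorded in \cref{prop:action_of_Gamma}(2). Consequently \cref{thm:A} gives
\[
|H^1(\Gamma,\Aut(X))|=\sum_{[c]\in H^1(\Gamma,P)}|H^1(\Gamma,\ _cD)/C_P(c)|.
\]
By \cref{prop:bijection_forms_cohomology}, or the Borel--Serre theorem, since $X$ is projective, the left side counts $\RR$-forms.

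Second, the index set. By \cref{prop:H1(P)_involutions}, a cocycle is determined by $c_\sigma=p\in P$ with $p^2=1$, and classes are conjugacy classes of such elements. I will label a representative $p_l$ by its number $l$ of fixed points in $\{0,\dots,n+1\}$, which is what the statement's notation refers to. Where $P=S_{n+2}$, the cycle type is determined by $l$, so the labelling is faithful. For general $P$, $l$ is just an attribute of the chosen representative. The stabilizer $(\ _{p_l}P)^\Gamma$ equals $C_P(p_l)$, exactly as in the last step of the proof of \cref{thm:A}.

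Third, the summands. I will make the twisted action on $D$ explicit. By \cref{prop:action_of_Gamma}(1) and (3),
\[
{}^{\sigma_{p}}\mu=s(p)\,\mu^{-1}s(p)^{-1}=\diag\bigl(\mu^{-1}_{p^{-1}(0)},\dots\bigr).
\]
Hence $H^1(\Gamma,\ _{p_l}D)$ is the abelian group $\{\mu: \mu\cdot{}^{\sigma_p}\mu=1\}$ modulo $\{b^{-1}\,{}^{\sigma_p}b\}$. Here $C_P(p_l)$ acts via $s$ as in the proof of \cref{thm:A}. This action is well defined because $C_P(p_l)$ is exactly $(\ _{p_l}P)^\Gamma$, and \cref{cor:twisted_kernel_quotient} identifies the fibre with the orbit set.

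The main obstacle is only bookkeeping. I must check that the hypotheses of \cref{thm:A}, namely smoothness and the coefficient-$1$ normalization, are the ones intended in this restatement. I must also check that the $C_P(p_l)$-action described in the proof of \cref{thm:A} agrees with the orbit description stated here. Beyond that, the argument is a direct substitution.
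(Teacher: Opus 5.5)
Your proposal is correct and follows the paper's route. The paper gives no separate proof: it states this result as equivalent to \cref{thm:A} for $\Gamma=\Gal(\CC/\RR)$, with $H^1(\Gamma,P)$ read off from \cref{prop:H1(P)_involutions} and $({}_{c}P)^\Gamma=C_P(c)$ as in the proof of \cref{thm:A}. Your remarks that smoothness and the coefficient-$1$ normalization must be imported from \cref{thm:A}, and that labelling classes by the number $l$ of fixed points is faithful only when $P=S_{n+2}$, are accurate bookkeeping that the paper leaves implicit.
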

Moreover, we can give more information about the twisted cohomology set of $\Gamma$ in $D$ by $p_l$.

\begin{theorem}[Theorem B]{\label{thm:B}}
    Let $p_l$ be an involution in $P$, then $|H^1(\Gamma,\ _{p_l}D)| = \dfrac{|\ker (1 - {p_l} _{*})|}{|\operatorname{Im}(1 + {p_l}_{*})|}$, where ${p_{l}}_{*}$ denotes the conjugation by $p_l$ on $D$.
\end{theorem}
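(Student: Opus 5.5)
We compute $H^1(\Gamma,\ _{p_l}D)$ directly from the definitions in \cref{def:H1_set} and \cref{def:twisting}, using that $\Gamma=\{1,\sigma\}$ is cyclic of order $2$ and $D$ is abelian. The first step is to identify the twisted action. The cocycle we twist by is $s(p_l)$, the permutation matrix of $p_l$. Its action on $D$ is
$$^{\sigma_{p_l}}\mu=p_l\,{}^{\sigma}\mu\,p_l^{-1}=p_l\,\mu^{-1}\,p_l^{-1},$$
by \cref{prop:action_of_Gamma}(1). Writing $D$ additively and $p_{l*}$ for conjugation by $p_l$, this says that $\sigma$ acts on $D$ as the automorphism $\theta:=-p_{l*}$. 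Since $p_l^2=1$ and $p_{l*}$ commutes with inversion, $\theta^2=\Id$, so $D$ is a $\ZZ[\Gamma]$-module.

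Next we use the standard description of $H^1$ of a group of order two acting on an abelian group. A cocycle is determined by $a:=a_\sigma$, because $a_1=0$ is forced. The cocycle condition $a_{\sigma^2}=a_\sigma+\theta(a_\sigma)=0$ reads $(1+\theta)a=0$. Two cocycles $a,a'$ are cohomologous if and only if $a'=a-b+\theta(b)$ for some $b\in D$, that is, $a'-a\in\operatorname{Im}(1-\theta)$. Hence
$$H^1(\Gamma,\ _{p_l}D)\cong\ker(1+\theta)/\operatorname{Im}(1-\theta).$$
Here $\operatorname{Im}(1-\theta)\subseteq\ker(1+\theta)$ because $(1+\theta)(1-\theta)=1-\theta^2=0$. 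Substituting $\theta=-p_{l*}$ gives $1+\theta=1-p_{l*}$ and $1-\theta=1+p_{l*}$. Since $D$ is finite, taking cardinalities yields the formula of \cref{thm:B}.

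The main point to check is the sign convention in the twisted action, so that $\ker$ and $\operatorname{Im}$ land on the stated operators and not the swapped ones. The other point is that the twisted cohomology set really coincides with the abelian group cohomology, i.e.\ that the equivalence relation $a'=b^{-1}a\,{}^{\sigma_{p_l}}b$ becomes $a'=a-b+\theta(b)$ in additive notation. This holds because $D$ is abelian, so the conjugation in the equivalence relation disappears. We would also remark that the statement covers $p_l=1$: then $\theta$ is inversion, and the formula recovers $D[2]/(2D)\cong D/D^2$, which has the same size as in \cref{prop:H1(D)_descomposition}.
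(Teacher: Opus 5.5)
Your proof is correct and follows essentially the paper's argument: the cocycle condition gives $Z^1=\ker(1-p_{l*})$, and cohomologous cocycles differ by an element of $\operatorname{Im}(1+p_{l*})$. You write this additively via $\theta=-p_{l*}$, and you make explicit the inclusion $\operatorname{Im}(1+p_{l*})\subseteq\ker(1-p_{l*})$ that the cardinality ratio needs. One small slip is in your closing remark: for $p_l=1$ one has $\ker(1-p_{l*})=D$, not $D[2]$, so the formula gives $D/2D$ directly.
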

\begin{proof}
    The condition $a \in Z^1 (\Gamma, \ _{p_l}D)$ is equivalent to $a_{\sigma} \ p_l a_{\sigma}^{-1} p_{l} ^{-1} =1$, which is equivalent to $a_{\sigma} = p_l \ a_{\sigma} p_l^{-1}$. If we consider ${p_l}_* : D \to D$ given by ${p_l}_* (d) := p_l d p_l ^{-1}$, we obtain $Z^1(\Gamma, \ _{p_l}D) = \ker(1-{p_l}_*) = \{ \mu \in D : \mu(p_l \ \mu \ p_{l}^{-1})^{-1} = 1 \} = \{\mu \in D: \mu = p_l \ \mu \ p_l \} = C_D(p_l)$. Furthermore, two cocycles $a$ and $a'$ are equivalent if and only if there exists $b \in D$ such that $a'_{\sigma} = b^{-1} \ a_{\sigma} \ ^{\sigma_{p_l}}(b) = b^{-1} \ a_{\sigma} \ p_l \ b^{-1} \ p_{l}^{-1} $. Now, let $1 + {p_l}_*$ be given by $(1 + {p_l}_*)(b) = b \cdot p_l\ b \ p_l^{-1} $, in these terms, the equivalence of cocycles is $a'_{\sigma} \cdot a_{\sigma}^{-1} \in \operatorname{Im}(1 + {p_l}_*)$. In consequence, $ H^1 (\Gamma, \ _{p_l}D) = Z^1(\Gamma , \ _{p_l}D)/\sim = \ker(1 - {p_l}_*)/\operatorname{Im}(1 + {p_l}_*) $.
\end{proof}
We use these theorems to study the real forms of hypersurfaces whose automorphism group is of the form $D \rtimes P$. By \cref{cor:DrtimesP}, \cref{prop:fermat} and \cref{prop:delsarte}, the Fermat, Delsarte and Klein hypersurfaces satisfy this condition, and we now count their real forms.

\subsubsection{Count of real forms of classical hypersurfaces}
Let $T^n_d :=\{x_0^{d-1}x_1 + \cdots + x_{n+1}^{d} = 0\} \subset \PP^{n+1}_{\CC}$ be the Delsarte hypersurface of dimension $n$ and degree $d$, then \cite{MR4771229} says that for $n\geq 2$ and $d\geq 4$ with $(n,d) \neq (2,4)$ we get $\Aut(T^n_d)\cong \ZZ/(d-1)^{n+1}\ZZ $.
\begin{corollary}{\label{cor:R_forms_Delsarte}}
    Let $T^n_d \subset \PP^{n+1}_{\CC}$ be the Delsarte hypersurface of dimension $n \geq 2$ and degree $d \geq 4$ with $(n,d)\neq(2,4)$. Then, the number of real forms is
    $$ |H^1(\Gamma,\Aut(T^n_d))| = \begin{cases} 1 & \text{if $d$ is even,}\\ 2 & \text{if $d$ is odd} \end{cases} $$
\end{corollary}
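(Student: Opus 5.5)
Over $\CC$, \cref{prop:delsarte} (equivalently \cite{MR4771229}) gives $\Aut(T^n_d)\cong\ZZ/(d-1)^{n+1}\ZZ$. In the proof of \cref{prop:delsarte} every automorphism turned out to be diagonal. So in the decomposition of \cref{cor:DrtimesP} we have $D=\Aut(T^n_d)$ and $P=\{1\}$. This is consistent with the monomial structure: a permutation preserving $\mathcal M_{\mathcal T}$ must fix $x_{n+1}$, the only variable occurring with exponent $d$, and then the chain $x_i^{d-1}x_{i+1}$ forces it to be the identity. Applying \cref{thm:A} (in the form of \cref{theorem A}), the sum has a single term indexed by the trivial class $[1]\in H^1(\Gamma,P)$. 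The twist by the trivial cocycle does nothing and $C_P(1)=\{1\}$, so
$$|H^1(\Gamma,\Aut(T^n_d))|=|H^1(\Gamma,D)|.$$

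Next, by \cref{prop:action_of_Gamma}, $\Gamma$ acts on $D$ by inversion. Then \cref{prop:H1(D)_descomposition} gives $H^1(\Gamma,D)\cong D/D^2$. Since $D$ is cyclic of order $N=(d-1)^{n+1}$, we get $D/D^2\cong\ZZ/\gcd(2,N)\ZZ$.

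It remains to determine the parity of $N$. If $d$ is even, then $d-1$ is odd, so $N$ is odd and there is exactly one real form. If $d$ is odd, then $N$ is even and there are exactly two. For the odd case one could exhibit the second form explicitly: twist the standard real structure by the unique diagonal element of order $2$ in $D$. That element is $\tvarphi_{00}=\zeta$ with $\zeta^{d(d-1)^{n+1}/2}$ appropriately chosen. This is only a sanity check, not logically needed.

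The only point requiring care is justifying $P=\{1\}$ and that the cyclic group $D$ really consists of diagonal classes with root-of-unity entries, so that \cref{prop:action_of_Gamma}(1) applies. Both follow from the proof of \cref{prop:delsarte}. There $\tvarphi$ was shown to be diagonal with entries determined by $\tvarphi_{00}\in\mu_{d(d-1)^{n+1}}$, after normalizing $\tvarphi_{n+1,n+1}^d=1$. I do not expect any further obstacle.
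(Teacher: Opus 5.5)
Your proof is correct and follows the paper's argument. Since $P=\{1\}$, the count reduces to $|H^1(\Gamma,D)|=|D/D^2|$ with $D$ cyclic of order $(d-1)^{n+1}$, so the answer depends only on the parity of $d-1$. One correction to your optional sanity check: for $d$ odd and $n\geq 2$ we have $4\mid (d-1)^{n+1}$, so the element of order $2$ in $D$ is a square and gives the \emph{trivial} class; the second real form comes instead from twisting by a generator (equivalently, any non-square) of $D$.
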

\begin{proof}
    By \cref{prop:delsarte} we obtain $P = \{1\}$, then $H^1(\Gamma, P) = \{1\}$. By \cref{prop:H1(D)_descomposition} we see $H^1 (\Gamma, D) \cong (\ZZ/2\ZZ)^{r}$, where $r \in \{0,1\}$. Let $N = (d-1)^{n+1}$, then $D \cong \ZZ/N\ZZ$ is cyclic and we conclude that $r=1$ if $N$ is even and $r= 0$ if $N$ is odd. Since $N = (d-1)^{n+1}$ is even if and only if $d$ is odd, we conclude the result.
\end{proof}

\begin{remark}
    It is well known in the literature that if $\ZZ/2\ZZ$ acts trivially or by inversion on some cyclic group $\ZZ/N\ZZ$, then $H^1(\ZZ/2\ZZ , \ZZ/N\ZZ)$ is trivial or isomorphic to $\ZZ/2\ZZ$ depending on the parity of $N$. Thus, this case (where $P = \{1\}$) can be solved computing cohomology in the usual way. However, for the hypersurfaces described below, in which $P \neq \{1\}$ we will twist by cocycles as in the theorem.
\end{remark}

Let $K^{n}_d := \{x_0^{d-1}x_1 + \cdots + x_{n+1}^{d-1}x_0 = 0\} \subset \PP^{n+1}_{\CC}$  be the Klein hypersurface of dimension $n\geq 2$ and degree $d\geq 4$ with $(n,d)\neq (2,4)$. By \cite{MR4771229}, $\Aut(K^n_d) \cong \ZZ/m\ZZ \rtimes \ZZ/(n+2)\ZZ$, where $m=\frac{(d-1)^{n+2} - (-1)^{n+2}}{d}$. In particular, doing some computations, we note the following proposition about the parity of $m$ in terms of $n$ and $d$.
\begin{proposition}{\label{prop:parity_of_m}}
    Let $m = \dfrac{(d-1)^{n+2} - (-1)^{n+2}}{d}$. Then, $m$ is even if and only if $n$ and $d$ are even.
\end{proposition}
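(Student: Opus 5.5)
The plan is to avoid expanding $(d-1)^{n+2}$ in full and instead work with its residue modulo $2d$. Since $m\,d=(d-1)^{n+2}-(-1)^{n+2}$, $m$ is even if and only if $(d-1)^{n+2}\equiv(-1)^{n+2}\pmod{2d}$. So the whole task is to compute $(d-1)^{n+2}$ modulo $2d$. We split into cases according to the parities of $d$ and $n$.

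\emph{Case $d$ odd.} Here $d-1$ is even and $d$ is odd, so $md$ and $m$ have the same parity. Since $(d-1)^{n+2}$ is even while $(-1)^{n+2}=\pm1$ is odd, $md$ is odd, hence $m$ is odd.

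\emph{Case $d$ even.} Write $d=2e$ and expand with the binomial theorem:
$$(d-1)^{n+2}=\sum_{j=0}^{n+2}\binom{n+2}{j}d^j(-1)^{n+2-j}.$$
Every term with $j\geq2$ is divisible by $d^2=2e\cdot d$, so it vanishes modulo $2d$. Therefore
$$(d-1)^{n+2}\equiv(-1)^{n+2}+(n+2)(-1)^{n+1}d\pmod{2d}.$$
Dividing by $d$ gives
$$m\equiv(n+2)(-1)^{n+1}\pmod 2.$$
Hence $m$ is even exactly when $n$ is even.

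Combining the two cases, $m$ is even if and only if both $n$ and $d$ are even. The one point that needs care is the reduction modulo $2d$ in the even case. Working modulo $d^2$ alone would give the congruence for $m$ only modulo $d$, not modulo $2$. It is the fact that $d^2$ is divisible by $2d$ (because $d$ is even) that makes the reduction valid. No other step should pose difficulty.
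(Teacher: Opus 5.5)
Your proof is correct, but it takes a somewhat different route from the paper.

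The paper sets $x=d-1$ and uses the exact geometric-series identity $m=\sum_{j=0}^{n+1}(-1)^{n+1-j}x^{j}$. It then reduces this mod $2$ term by term. For $d$ odd, every term except the constant one is even. For $d$ even, all $n+2$ terms are odd. This handles both cases from a single formula and shows at once that $m$ is an integer.

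You never write $m$ explicitly. For $d$ odd you argue directly on the parity of $md$. For $d$ even you expand $(d-1)^{n+2}$ in powers of $d$, work with $md$ modulo $2d$, and cancel $d$ at the end. That cancellation is legitimate, since $2d\mid (m-k)d$ forces $2\mid m-k$.

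One small correction to your closing comment: the reduction modulo $2d$ is not a delicate point. Your binomial expansion already gives the exact identity
\[
m=\sum_{j=1}^{n+2}\binom{n+2}{j}(-1)^{n+2-j}d^{\,j-1},
\]
and hence $m\equiv(-1)^{n+1}(n+2)\pmod d$ for every $d$. When $d$ is even, a congruence modulo $d$ does imply one modulo $2$, so working modulo $d^2$ would have sufficed.

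So expanding in powers of $d$ gives you the stronger congruence of $m$ modulo $d$, but you still need a separate argument for odd $d$. Expanding in powers of $d-1$, as the paper does, gives the parity in both cases at a glance.
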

\begin{proof}
    Write $x=d-1$, so that $dm=x^{n+2}-(-1)^{n+2}$ and $d=x+1$. The
    geometric series (telescoping against the factor $x+1$) gives
    $$ m=\sum_{j=0}^{n+1}(-1)^{n+1-j}\,x^{j}
      =x^{n+1}-x^{n}+\cdots+(-1)^{n+1}. $$
    If $d$ is odd, then $x$ is even, every summand with $j\geq1$ is even and
    $m\equiv(-1)^{n+1}\equiv1\pmod 2$, so $m$ is odd. If $d$ is even, then
    $x$ is odd, every one of the $n+2$ summands is odd, and hence
    $m\equiv n+2\equiv n\pmod2$. Therefore $m$ is even if and only if $d$
    and $n$ are both even.
\end{proof}
This proposition allows us to discard a case in the study of the cohomology in terms of the parity of $n$ and $d$ in the following corollary.
\begin{corollary}\label{cor:R_forms_Klein}
    Let $K^n_d \subset \PP^{n+1}_{\CC}$ be the Klein hypersurface of dimension $n\geq 2$ and degree $d\geq 4$ with $(n,d)\neq(2,4)$. Then, the number of real forms is 
    $$ |H^1(\Gamma,\Aut(K^n_d))| = \begin{cases}
        1 & \text{if $n$ is odd,}\\
        2 & \text{if $n$ is even and $d$ odd,}\\
        4 & \text{if $n$ and $d$ are even.}
    \end{cases}$$
\end{corollary}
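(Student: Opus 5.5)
The count will come from \cref{theorem A} applied with $D\cong\ZZ/m\ZZ$ and $P=\langle\nu\rangle\cong\ZZ/(n+2)\ZZ$, where $\nu=(0\,1\,\cdots\,n+1)$, as in \cref{prop:klein}; we sum over the classes in $H^1(\Gamma,P)$.

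\emph{Step 1: the classes $[c]$ and the $C_P(c)$-actions.} Since $P$ is cyclic, \cref{prop:H1(P)_involutions} shows that $H^1(\Gamma,P)$ consists of $[1]$ together with $[\iota]$, where $\iota=\nu^h$ and $h=(n+2)/2$, and the latter class occurs exactly when $n$ is even. In both cases $C_P(c)=P$. By \cref{prop:equivariant_section} the lift $s(p)$ is $\Gamma$-fixed and commutes with $s(c)$. Hence the action $p\cdot[\mu]=[s(p)^{-1}\mu\,{}^{\sigma_{s(c)}}s(p)]$ from the proof of \cref{thm:A} is induced by conjugation by $s(p)$, which is a group automorphism of $H^1(\Gamma,{}_cD)$. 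We will show that each $H^1(\Gamma,{}_cD)$ has order at most $2$. A group of order at most $2$ has no nontrivial automorphism, so every orbit is a single point, and the sum reduces to $\sum_{[c]}|H^1(\Gamma,{}_cD)|$.

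\emph{Step 2: the term $c=1$.} By \cref{prop:H1(D)_descomposition}, $H^1(\Gamma,D)\cong D/D^2$, which has order $\gcd(2,m)$. By \cref{prop:parity_of_m} this is $2$ if $n$ and $d$ are both even, and $1$ otherwise. If $n$ is odd, this is the only term and the total is $1$.

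\emph{Step 3: the term $c=\iota$ for $n$ even (main step).} Take the generator $\mu=\diag(\zeta^{(1-d)^i})$ of $D$, with $\zeta$ a primitive $dm$-th root of unity. By \cref{prop:action_of_Gamma}(3), conjugation by $\nu$ acts on $D\cong\ZZ/m\ZZ$ as multiplication by $(1-d)^{\pm1}$. Therefore $\iota_*$ is multiplication by $e:=(1-d)^h$, up to inversion, which does not affect the computation below. By \cref{thm:B},
$$|H^1(\Gamma,{}_\iota D)|=\frac{\gcd(e-1,m)\gcd(e+1,m)}{m}.$$
The number-theoretic input is that $(e-1)(e+1)=(d-1)^{n+2}-1=dm$ and $e\equiv1\pmod d$.

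For an odd prime $\ell$, it divides at most one of $e\pm1$. Since $m\mid(e-1)(e+1)$, the $\ell$-parts of the numerator and of $m$ agree.

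If $d$ is odd, then $e$ is even, so $e\pm1$ are odd and the quotient is $1$.

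If $d$ is even, then $e$ is odd, $m$ is even, and we compare $2$-adic valuations using $v_2(m)=v_2(e-1)+v_2(e+1)-v_2(d)$. One of $v_2(e\pm1)$ equals $1$.
\begin{itemize}
\item If $v_2(e+1)=1$, then $v_2(m)=v_2(e-1)-v_2(d)+1\le v_2(e-1)$, because $d\mid e-1$.
\item If $v_2(e-1)=1$, then $v_2(d)=1$ and $v_2(m)=v_2(e+1)$.
\end{itemize}
In either case $\min(v_2(e-1),v_2m)+\min(v_2(e+1),v_2m)=v_2(m)+1$, so the quotient is $2$.

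\emph{Step 4: summing.} For $n$ odd the total is $1$. For $n$ even and $d$ odd it is $1+1=2$. For $n$ and $d$ both even it is $2+2=4$.

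I expect the main obstacle to be the careful $2$-adic bookkeeping in Step 3. A second point to check is the precise twisted action: whether $\iota_*$ is composed with inversion. This will not change the gcd formula, since $\gcd(-e\mp1,m)=\gcd(e\pm1,m)$.
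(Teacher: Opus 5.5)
Your proof is correct and has the same overall structure as the paper's. You apply Theorem~A to the classes $[1]$ and $[\iota]$, note that $C_P(c)=P$ acts trivially because each twisted $H^1$ has at most two elements, handle the trivial class with \cref{prop:H1(D)_descomposition} and \cref{prop:parity_of_m}, and handle the involution with \cref{thm:B}. Your reason for the trivial action is slightly cleaner than the paper's: you observe that conjugation by $s(p)$ induces a group automorphism of $H^1(\Gamma,{}_cD)$, whereas the paper observes that the neutral class is fixed. Both arguments work.

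The real difference is how you evaluate $\gcd(e-1,m)\gcd(e+1,m)/m$. The paper avoids any prime-by-prime analysis. Since $e\equiv 1\pmod d$, it writes $e-1=dq$, and then $(e-1)(e+1)=dm$ gives $m=q(e+1)$. Hence $e+1\mid m$, and $\gcd(e-1,m)=|q|\gcd(d,e+1)=|q|\gcd(d,2)$. The quotient is therefore $\gcd(d,2)$ in one line, with no split by the parity of $d$ and no $2$-adic case analysis. Your valuation argument reaches the same value with more bookkeeping. Its advantage is that it shows clearly that odd primes never contribute, so the group is a $2$-group from the start and only the $2$-adic comparison matters.

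One small slip: in your first bullet, the inequality $v_2(e-1)-v_2(d)+1\le v_2(e-1)$ follows from $v_2(d)\ge 1$ (since $d$ is even), not from $d\mid e-1$. The divisibility $d\mid e-1$ is what you need in the second bullet, to force $v_2(d)=1$.
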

\begin{proof}
    In this case $D = \langle \mu \rangle  \cong \ZZ/m\ZZ$ and $P = \langle \tau \rangle \cong \ZZ/(n+2)\ZZ$. Since $P$ is a cyclic group and $\Gamma$ acts trivially on $P$ (by \cref{prop:H1(P)_involutions}) we obtain
    $$ H^1 (\Gamma , P) = \begin{cases}
    \{[1]\} & \text{if $n$ is odd}; \\
    \{ [1] , [\tau_0]\} & \text{if $n$ is even}.
    \end{cases} $$
    where $\tau _0 = \tau^{\frac{n+2}{2}}$ is the unique element of order $2$ in $P$.
    Now, we study fibers of $H^1(\Gamma, P)$:
    \begin{enumerate}
        \item Fiber of $[1]$: By \cref{prop:H1(D)_descomposition} to $D = \ZZ/m\ZZ$ we obtain $H^1(\Gamma, D) \cong (\ZZ/2\ZZ) ^r$, with $r =1$ if $m$ is even and $r=0$ if $m$ is odd. Using \cref{cor:kernel_as_quotient} we determine when two elements of $H^1(\Gamma , D)$ lie in the same image in $H^1(\Gamma, \Aut(K^n_d))$: we study the action of $C_P(1) = P = \langle \tau \rangle$, given by $\tau^j ([\mu^k]) = [\tau^{-j} \ \mu^{k} \ \tau^j]= [\mu^{k(d-1)^{j}}]$. Taking $\mu^k = 1$, we see that $\tau^j ([1]) = [1]$ for all $\tau^j \in \langle \tau \rangle =P$. This means that $[1]$ defines a single orbit (which does not depend on the parity of $m$). Since $H^1(\Gamma, D)$ has at most $2$ elements in different orbits we obtain $H^1(\Gamma, D) \hookrightarrow H^1(\Gamma, \Aut(K^n_d))$, then, the fiber of $[1]$ has $1$ element if $m$ is odd, or $2$ elements if $m$ is even.
        \item Fiber of $[\tau_0]$: Assuming that $n$ is even, we see that the action of the only element of order $2$ $p_l \in P$ on $D$ is $^{\sigma_{\tau_0}}(\mu) = \tau_0 \ ^{\sigma}\mu \ \tau_0 ^{-1} = \mu^{-(1-d)^{\frac{n+2}{2}}} $. Let $g =(1-d)^{\frac{n+2}{2}} $ (note that $g$ may be negative; all the gcd's below are taken non-negative), then $(1-d)^{n+2} \equiv 1 \mod m $ and we see that $g^2 \equiv 1 \mod m$, furthermore, $g \equiv 1 \mod d$ and \cref{thm:B} says
        \begin{align*}
            |H^1(\Gamma ,\ _{\tau_0} D)| = \dfrac{|\ker(1-{\tau_0}_{*})|}{|\operatorname{Im}(1+ {\tau_0}_{*})|}  = \dfrac{\gcd(m,1-g)}{\left( \frac{m}{\gcd(m,1+g)}  \right)} = \dfrac{\gcd(m,1-g) \gcd(m,1+g)}{m}
        \end{align*}
        Again, $(g-1)(g+1) = dm$ and using the fact that $g\equiv 1 \mod d$ we get $g-1 = dq$ for some $q \in \ZZ$, then, the equality is equivalent to $dq (g+1) = dm $ which imply $q(g+1) = m$ and we obtain $\gcd(g+1,m) = g+1$. Since $g-1 = dq$ and $m = q(g+1)$, then 
        $$ \gcd(g-1,m) = \gcd(dq , q(g+1)) = |q| \gcd (d,g+1) $$
        Now, as $g\equiv 1 \mod d$, then $g+1 \equiv 2 \mod d$ and we see $\gcd(d,g+1) = \gcd(d,2)$, in this way
        $$ |H^1(\Gamma, \ _{\tau_0}D)| = \dfrac{|q| (g+1) \gcd(d,2)}{m} = \gcd(d,2) $$
    
        This is equivalent to say that $H^1 (\Gamma, \ _{\tau_0}D) \cong (\ZZ/2\ZZ)^r$, with $r \in \{0,1\}$ depending on the parity of $d$. Finally, the action of $C_P(\tau_0) = P$ on $H^1(\Gamma, \ _{\tau_0}D)$ is $\tau^{j} \cdot (\mu^k) = \tau^{-j} \ \mu^{k} \ \tau^j = \mu^{k(d-1)^{j}} $, but this is similar to the case of the fiber of $[1]$ shown above, then a similar argument proves that $C_P(\tau_0)= \langle\tau \rangle$ acts trivially on $H^1(\Gamma, \ _{\tau_0}D)$, then the fiber of  $[\tau_0]$ has $1$ element if $d$ is odd, or $2$ elements if $d$ is even.
    \end{enumerate}
    Note that, if $n$ is even, by \cref{prop:parity_of_m} we obtain $\gcd(d,2) = \gcd(m,2)$, and as a consequence, we can write $|H^1(\Gamma, \ _{\tau_0}D)|=\gcd(2,m)$.
    Adding over the fibers, \cref{thm:A} gives the result.
\end{proof}
In both cases $T^n_d$ and $K^n_d$ we used the fact that the factors of the semidirect product are cyclic groups, which makes the description easier since we can express the elements in terms of a single generator; this is no longer the case for the Fermat hypersurface, since $P=S_{n+2}$. However, the formula gives another way to prove the theorem of \cite{Sasaki}, which counts the real forms of Fermat hypersurfaces.

Let $F^{n}_d := \{x_0^d + x_1^d + \cdots + x_{n+1}^{d} = 0\} \subset \PP^{n+1}_{\CC}$  be the Fermat hypersurface of dimension $n$ and degree $d$. By \cite{MR4771229} we know that if $(n,d)\notin \{(1,3), (2,4)\}$ then $\Aut(F^n_d) \cong (\ZZ/d\ZZ)^{n+1} \rtimes S_{n+2}$.

\begin{corollary}[\cite{Sasaki}]\label{cor:R_forms_Fermat}
    Let $F^n_d \subset \PP^{n+1}_{\CC}$ be the Fermat hypersurface of dimension $n$ and degree $d$ with $(n,d) \notin \{(1,3),(2,4)\}$. Then, the number of real forms is
    $$ |H^1(\Gamma, \Aut(F^n_d))| = \begin{cases}
        \left\lfloor \dfrac{n}{2} \right\rfloor+ 2 & \text{if $d$ is odd,}\\
        \dfrac{(n+3)(n+5)}{8} & \text{if $d$ is even and $n$ odd,}\\
        1 + \dfrac{(n+4)(n+6)}{8} & \text{if $d$ and $n$ are even.}
    \end{cases} $$
\end{corollary}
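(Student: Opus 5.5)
We apply \cref{thm:A} with $D=\widetilde D/\Delta$, where $\widetilde D=\mu_d^{n+2}$ (\cref{rmk:tilde_D}), $\Delta\cong\mu_d$ is the scalar diagonal, and $P=S_{n+2}$. By \cref{prop:H1(P)_involutions}, $H^1(\Gamma,P)$ is the set of conjugacy classes of elements $c$ of order at most $2$. Such a class is determined by the number $k$ of transpositions of $c$, with $0\leq k\leq\lfloor (n+2)/2\rfloor$. Equivalently it is determined by the number $l=n+2-2k$ of fixed points, where $l\equiv n \pmod 2$ and $0\leq l\leq n+2$. Fix such a $c$. By \cref{prop:action_of_Gamma}, the twisted action on $\widetilde D$ is $\mu\mapsto c\,\mu^{-1}c^{-1}$, and it preserves $\Delta$, on which it is inversion. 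Rather than applying \cref{thm:B} directly to $D$, I would compute $H^1(\Gamma,{}_cD)$ from the exact sequence $1\to\Delta\to{}_c\widetilde D\to{}_cD\to1$ of $\Gamma$-modules, using the periodicity of cohomology of $\Gamma\cong\ZZ/2\ZZ$ (Tate cohomology) to handle $H^2$.

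\emph{Step 1: cohomology of ${}_c\widetilde D$ and $\Delta$.} The module ${}_c\widetilde D$ splits as a product of two kinds of factor.
\begin{itemize}
\item Each fixed point of $c$ contributes a factor $\mu_d$ with the inversion action. It has $H^1\cong H^2\cong\ZZ/\gcd(2,d)\ZZ$, as in the proof of \cref{prop:H1(D)_descomposition}.
\item Each transposition contributes a factor $\mu_d^2$ on which $\Gamma$ swaps the coordinates and inverts. This is an induced module, so its $H^1$ and $H^2$ vanish.
\end{itemize}
Hence $H^i(\Gamma,{}_c\widetilde D)\cong(\ZZ/\gcd(2,d)\ZZ)^l$ for $i=1,2$, and the maps $H^i(\Gamma,\Delta)\to H^i(\Gamma,{}_c\widetilde D)$ are diagonal embeddings into the fixed-point coordinates.

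\emph{Step 2: the long exact sequence.} Consider
$$H^1(\Delta)\to H^1({}_c\widetilde D)\to H^1({}_cD)\to H^2(\Delta)\to H^2({}_c\widetilde D).$$
\begin{itemize}
\item If $d$ is odd, all these groups vanish, so $H^1(\Gamma,{}_cD)$ is trivial and each class $[c]$ contributes exactly $1$. The total is $\lfloor(n+2)/2\rfloor+1=\lfloor n/2\rfloor+2$.
\item If $d$ is even and $l\geq1$, the last map is injective. Then $H^1(\Gamma,{}_cD)\cong(\ZZ/2\ZZ)^l/\langle(1,\dots,1)\rangle$. Concretely, it is the set of sign vectors $(\pm1)^l$ on the fixed coordinates, taken modulo a global sign change.
\item If $d$ is even and $l=0$, then $H^1({}_c\widetilde D)=0=H^2({}_c\widetilde D)$, and $H^1(\Gamma,{}_cD)\cong H^2(\Delta)\cong\ZZ/2\ZZ$.
\end{itemize}

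\emph{Step 3: orbits of $C_P(c)$.} By the formula at the end of the proof of \cref{thm:A}, for $p\in C_P(c)$ the action is $[\mu]\mapsto[s(p)^{-1}\mu\, c\, s(p) c^{-1}]$. Since $p$ commutes with $c$, this is just conjugation by $s(p)$, i.e.\ permutation of coordinates by \cref{prop:action_of_Gamma}(3).
\begin{itemize}
\item For $l\geq1$: $C_P(c)$ contains $S_l$ acting on the fixed points, and it permutes the fixed-point coordinates of the sign vectors. The coordinates on the transpositions are cohomologically trivial, so they play no role. An orbit is therefore determined by the number $s$ of minus signs, up to $s\leftrightarrow l-s$, giving $\lfloor l/2\rfloor+1$ orbits.
\item For $l=0$: the action is by bijections of a two-element set fixing the neutral element, hence trivial. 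This contributes $2$.
\end{itemize}

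Summing with \cref{thm:A}:
\begin{itemize}
\item For $n$ odd: $\sum_{l=1,3,\dots,n+2}(\lfloor l/2\rfloor+1)=1+2+\cdots+\tfrac{n+3}{2}=\tfrac{(n+3)(n+5)}{8}$.
\item For $n$ even: $2+\sum_{l=2,4,\dots,n+2}(l/2+1)=2+\big(\tfrac{(n+4)(n+6)}{8}-1\big)=1+\tfrac{(n+4)(n+6)}{8}$.
\end{itemize}

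The main obstacle I expect is Step 3 together with the identification in Step 2. One must check that the connecting map and the permutation action are compatible, so that the orbit count is really the count of sign vectors up to permutation and global sign. For this I would write explicit cocycle representatives, namely $\diag(\pm1,\dots)$ on the fixed points, and $\diag(\zeta,\dots)$ with $\zeta^2=-1$ suitably placed on the pairs when $l=0$, and verify the action directly.
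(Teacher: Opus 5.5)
Your proposal is correct and gives the paper's count. The skeleton is the same as the paper's: apply \cref{thm:A}, run over involution classes $c$ with $l$ fixed points, compute $H^1(\Gamma,{}_cD)$, then count $C_P(c)$-orbits. The genuine difference is how you compute $H^1(\Gamma,{}_cD)$.

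\emph{How the two computations correspond.} The paper works by hand with lifts $\tilde\mu\in\widetilde D$. It encodes the quotient by $\Delta$ in a scalar $\lambda$ with $\tilde\mu_j\tilde\mu_{c(j)}^{-1}=\lambda$ and $\lambda^2=1$, and treats $\lambda=\pm1$ separately. You instead use the long exact sequence of $1\to\Delta\to{}_c\widetilde D\to{}_cD\to1$, with the transposition factors induced. The two match exactly. Your connecting map $H^1(\Gamma,{}_cD)\to H^2(\Gamma,\Delta)\cong\mu_d[2]$ sends the class of $\mu$ to $\tilde\mu\,c\,\tilde\mu^{-1}c^{-1}=\lambda\,\Id$. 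So injectivity of $H^2(\Gamma,\Delta)\to H^2(\Gamma,{}_c\widetilde D)$ for $l\geq1$ is the structural reason the paper's $\lambda=-1$ class occurs only for $l=0$. Likewise, the image of $H^1(\Gamma,\Delta)=\mu_d/\mu_d^2$ is the global flip the paper obtains by rescaling with a scalar matrix.

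\emph{What each route buys.} Your route gives the group structure on $H^1(\Gamma,{}_cD)$ for free. It also removes the obstacle you anticipate in Step 3 without cocycle bookkeeping. For $p\in C_P(c)$, conjugation by $s(p)$ is a $\Gamma$-equivariant automorphism of the whole short exact sequence and is trivial on $\Delta$. By naturality it therefore acts on the long exact sequence: it permutes the fixed-point factors of $H^1(\Gamma,{}_c\widetilde D)$ and fixes $H^2(\Gamma,\Delta)$. The paper's route is more elementary and produces explicit cocycles, which is what one needs to write down the real forms themselves.

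\emph{A caveat if you write explicit representatives.} The elements $\pm1$ represent the two classes of $\mu_d/\mu_d^2$ only when $d\equiv2\pmod 4$. If $4\mid d$, then $-1$ is a square in $\mu_d$, so use $1$ and a primitive $d$-th root of unity $\zeta_d$ instead. The paper's use of $-\Id$ for the global flip has the same issue, whereas $\zeta_d\Id$ works for all even $d$. Similarly, for $l=0$ an element $\zeta$ with $\zeta^2=-1$ lies in $\mu_d$ only when $4\mid d$. The representative $(1,-1,\ldots,1,-1)$ works for every even $d$.
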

\begin{proof}
    For this hypersurface $D = (\ZZ/d\ZZ)^{n+1}$ and $P = S_{n+2}$. By \cref{prop:H1(P)_involutions} $H^1(\Gamma , S_{n+2})$ corresponds to involutions of $S_{n+2}$ modulo conjugation, that is, permutations of type $(1^l , 2^m)$ with $l+2m =n+2$. Furthermore, since $m \in \{0,1,\ldots, \left\lfloor \frac{n+2}{2} \right\rfloor\}$ we see that $H^1(\Gamma, S_{n+2})$ has $\left\lfloor \frac{n+2}{2} \right\rfloor + 1$ elements. Let $p_l \in S_{n+2}$ be an involution with $l$ fixed points of type $(1^l,2^m)$, actually, we can assume that $p_l = (1,2)(3,4)\cdots (2m-1,2m)$. Then the action of $\Gamma$ on $D=(\ZZ/d\ZZ)^{n+1}$ twisted by $p_l$ is
    $$ ^{\sigma_{p_l}} \mu := p_l \ ^{\sigma} \mu\  p_l^{-1} = p_l \ \mu^{-1} \ p_l^{-1} = \diag(\mu_{p_{l}(0)}^{-1} , \ldots,\mu_{p_{l}(n+1)}^{-1} )$$
    The condition of cocycle with the twisted action is $\mu \ ^{\sigma_{p_l}}\mu = 1$ in $D$, that is, $\mu_j \ \mu_{p_l(j)}^{-1} = \lambda$ for all $j$ and some scalar $\lambda \in \mu_d(\CC)$ (this is where the quotient by the diagonal $\Delta$ comes in). If we apply $p_l$ again, we obtain $\mu_{p_l(j)} \ \mu_j^{-1} = \lambda$, and multiplying both equations we conclude $\lambda^2 = 1$. Then, $\lambda \in \{-1,1\}$
    \begin{enumerate}
        \item If $d$ is odd, then $\lambda^2=1$ together with $\lambda^d=1$ implies $\lambda=1$. Since cocycles are of the form $(a_1 , a_1 , \ldots , a_m,a_m , b_1, \ldots , b_l)$ we see that two cocycles are equivalent if and only if there exists $c\in D$ such that $\mu_j' = c_j^{-1} \mu_j \ c_{p_l(j)}^{-1} $ and two cases appear:
        \begin{enumerate}
            \item If $(2r-1,2r)$ is a transposition of $p_l$, then $\mu_{2r-1}' = c_{2r-1}^{-1} a_r c_{2r}^{-1}$. Taking $c_{2r-1} = 1$ and $c_{2r}=a_r$ we obtain $\mu_{2r-1}'=1$.
            \item If $j$ is a fixed point of $p_l$, then $\mu_j' = c_j^{-2} b_j$, since $d$ is odd we see that the function $x\mapsto x^2$ is bijective in $\ZZ/d\ZZ$ and there exists $c_j$ such that $c_j^2=b_j$, then $\mu_j'=1$.
        \end{enumerate}
        Thus, all cocycles are equivalent to the trivial one, this means $H^1(\Gamma, \ _{p_l}D) = 1$ for all $l$. By \cref{thm:A} we conclude that 
        $$|H^1(\Gamma, \Aut(F^{n}_d))| = |H^1(\Gamma, S_{n+2})| = \left\lfloor \dfrac{n+2}{2} \right\rfloor + 1 = \left\lfloor \dfrac{n}{2} \right\rfloor + 2$$
        \item If $d$ is even, we have two cases for the value of $\lambda$:
        \begin{enumerate}
            \item If $\lambda = 1$, then the condition of cocycle is $\mu_j = \mu_{p_l(j)}$ similar as above and cocycles are of the form $(a_1, a_1, \ldots, a_m, a_m, b_1, \ldots, b_l)$. Now, two cocycles are equivalent if there exists $c\in D$ such that $\mu_j' = c_j^{-1}\mu_j c_{p_l(j)}^{-1}$ and we get two cases:
            \begin{enumerate}
                \item If $(2r-1,2r)$ is a transposition of $p_l$, then $\mu_{2r-1}' = c_{2r-1}^{-1} a_r c_{2r}^{-1}$. Taking $c_{2r-1} = 1$ and $c_{2r}=a_r$ we obtain $\mu_{2r-1}'=1$.
                \item If $j$ is a fixed point of $p_l$, then $\mu_j' = c_j^{-2} b_j$, since $d$ is even, the function $x\mapsto x^2$ is not surjective in $\ZZ/d\ZZ$, but for each fixed point $j$ of $p_l$ the class of $b_j$ in $\mu_d(\CC)/\mu_d(\CC)^2 \cong \ZZ/2\ZZ = \{-1,1\}$ is well defined; then the cocycles modulo coordinatewise coboundaries are identified with $(\ZZ/2\ZZ)^{l}$, identifying each cocycle with $(b_1 , \ldots, b_l) \in \{-1,1\}^{l}$.
            \end{enumerate}
            With the idea of applying \cref{thm:A}, we must compute the  orbits of $C_P(p_l)$ in the set $H^1(\Gamma, \ _{p_l}D)$. Let $c \in C_P(p_l)$; a direct computation shows that $c$ permutes fixed points into fixed points and transpositions into transpositions, so $c$ acts on $(b_1, \ldots, b_l)$ by a permutation of the $b_i$'s. Thus, two cocycles $(b_1 , \ldots , b_l)$ and $(b_1 ' , \ldots, b_l')$ lie in the same orbit if and only if they have the same number of entries equal to $1$. Using the fact that if $(b_1 , \ldots, b_l)$ has $r$ entries equal to $1$ then it is equivalent to $(-b_1 , \ldots , -b_l)$ with $l-r$ entries equal to $1$ (multiply the representative by the scalar matrix $-\Id\in\Delta$, which does not change the class in $D$, and renormalize the paired entries), we conclude that $r$ and $l-r$ are identified, which gives $\left\lfloor \dfrac{l}{2} \right\rfloor + 1$ orbits for each $p_l$.
            \item Now, suppose that $\lambda = -1$ and remark that this case occurs just if $p_l$ does not have fixed points then $l=0$ since if $j$ is a fixed point of $p_l$ the cocycle condition forces $\lambda=1$, a contradiction. Then, these cocycles are of the form $(a_1 , -a_1 , \ldots, a_m , -a_m)$ and two cocycles $\mu$ and $\mu'$ are equivalent if and only if there exists $c\in D$ such that $\mu_j' = c_j^{-1} \ \mu_j \ c_{p_l(j)}^{-1}$. But note that
            $$ \mu_j' (\mu_{p_0(j)}')^{-1} = (c_j^{-1} \mu_j c_{p_0(j)}^{-1})(c_{p_0(j)} \mu_{p_0(j)}^{-1} c_j) = \mu_j \mu_{p_0(j)}^{-1} $$
            Then, cocycles with $\lambda = 1$ cannot be equivalent to cocycles with $\lambda =-1$. Moreover, normalizing the pairs as above, every cocycle with $\lambda=-1$ is equivalent to $(1,-1,\ldots,1,-1)$, so these cocycles form exactly one class. Thus, we obtain an additional class in the fiber of $[p_0] \in H^1(\Gamma, S_{n+2})$.
        \end{enumerate}
        Now \cref{thm:A} says
        $$|H^1 (\Gamma, \Aut(F^n_d))| = \sum_{[p_l] \in H^1(\Gamma, S_{n+2})} \left(\left\lfloor\frac{l}{2}\right\rfloor + 1\right) $$
        where $[p_0]$ gives one additional class when $n$ is even, coming from $\lambda =-1$. In this context, the count is more explicit depending on the parity of $n$:
        \begin{itemize}
            \item If $n$ is odd and $l\in \{1,3, \ldots, n+2 \}$, i.e., $l=2i+1$ with $i \in \left\{0,1,\ldots, \frac{n+1}{2}\right\}$. Then, 
            $$ \sum_{i=0}^{\frac{n+1}{2}}\left(\left\lfloor\frac{2i+1}{2}\right\rfloor + 1\right) =\sum_{i=0}^{\frac{n+1}{2}} (i+1) = \frac{(n+3)(n+5)}{8}$$
            \item If $n$ is even and $l\in \{ 0,2,\ldots, n+2 \}$ we have $l=2i$ for $i \in \{0,1,\ldots, \frac{n}{2} +1\}$. Since $i=0$ has two classes (for each $\lambda$), then
            $$1  + \sum_{i=0}^{\frac{n}{2} +1} \left(\left\lfloor\frac{2i}{2}\right\rfloor + 1\right) =  1 + \sum_{i=0}^{\frac{n}{2}+1}(i+1) = 1 + \frac{(n+4)(n+6)}{8}$$
        \end{itemize}
    \end{enumerate}
    This proves the statement.
\end{proof}
In the next subsection we show that the same framework applies, under the hypotheses of \cref{rem:standing}, to finite fields $k$.

\subsection{\texorpdfstring{$\FF_q$-forms}{Fq-forms}} Let $p \in \ZZ$ be an odd prime number and $q$ a power of $p$. Taking $k = \FF_q$ and $K = \overline{\FF}_q$ we obtain that $\Gal(K/k) \cong \widehat{\ZZ}$ which, unlike the case $k=\RR$, is not finite; however it is procyclic and, more importantly, it is a profinite group, so the statements of the previous section apply. Indeed, $\widehat{\ZZ}$ is topologically generated by the Frobenius homomorphism given by
\begin{align*}
    \phi : \overline{\FF}_q &\to\overline{\FF}_q\\
    x &\mapsto x^q
\end{align*}

As we proved in \cref{sec:diff}, the differential method extends to positive characteristic and, under the hypotheses of \cref{rem:standing}, the automorphism groups of our family of hypersurfaces over $K=\overline{\FF}_q$ are the same as over $\CC$.

In a similar way we can define $\phi=1 \times \sigma$ as an $\FF_q$-structure and state \cref{prop:bijection_forms_cohomology} in this context as follows
\begin{theorem}[\cite{Se}]
    Let $X$ be a quasiprojective algebraic variety defined over $\overline{\FF}_q$ with an $\FF_q$-structure $\phi$. There is a natural bijection between the set of $\FF_q$-forms of $X$ and $H^1(\Gamma ,\Aut_{\overline{\FF}_q}(X))$, where the topological generator $\sigma$ of $\Gamma = \Gal(\overline{\FF}_q / \FF_q)$ acts on $\Aut_{\overline{\FF}_q}(X)$ by $^{\sigma} \psi = \phi \ \psi \ \phi^{-1}$.
\end{theorem}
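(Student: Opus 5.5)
The statement is the finite-field analogue of \cref{prop:bijection_forms_cohomology}. I plan to deduce it from that proposition, i.e.\ from \cite[Chapter III, \S1.3, Prop.~5]{Se}, with $k=\FF_q$, $K=\overline{\FF}_q$ and $\Gamma=\Gal(K/k)\cong\widehat{\ZZ}$. Two points need care: the identification of the action ${}^{\sigma}\psi=\phi\,\psi\,\phi^{-1}$ with the Galois action used there, and continuity, since $\Gamma$ is now infinite.

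First I fix the $\FF_q$-structure. Saying that $\phi$ is an $\FF_q$-structure means that there is an $\FF_q$-variety $X'$ with $X=(X')_K$ and $\phi=1\times\sigma$ on $X'\times_{\Spec k}\Spec K$, where $\sigma$ is the arithmetic Frobenius $x\mapsto x^q$ (more precisely, the automorphism of $\Spec K$ it induces). By definition the action of $\Gamma$ on $\Aut_K(X)$ is ${}^{s}\psi=(1\times s)\,\psi\,(1\times s^{-1})$. For $s=\sigma$ this is exactly $\phi\,\psi\,\phi^{-1}$.

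The closure of $\langle\sigma\rangle$ is all of $\Gamma$, so the action is determined by $\sigma$ once it is continuous. Continuity holds because each $\psi\in\Aut_K(X)$ is defined over some finite extension $\FF_{q^r}$, as $X$ is of finite type. Hence $\psi$ is fixed by the open subgroup $\Gal(K/\FF_{q^r})$, and the stabilizers are open as \cref{def:cocycle} requires. In our applications $\Aut_K(X)$ is even finite by \cref{thm:MM} and \cref{rem:reduced}, which makes this immediate.

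With this identification, \cref{prop:bijection_forms_cohomology} gives an injection $\theta\colon E(K/k,X)\to H^1(\Gamma,\Aut_K(X))$. The map sends a form $X''$, together with a $K$-isomorphism $f\colon X\to X''_K$, to the cocycle $a_s=f^{-1}\circ{}^{s}f$. Continuity of this cocycle again follows because $f$ is defined over a finite extension. Surjectivity for quasiprojective $X$ is Weil's descent \cite{Se}. A continuous cocycle factors through some finite quotient $\Gal(\FF_{q^r}/\FF_q)$, which gives descent data on $X_{\FF_{q^r}}$. Quasiprojectivity guarantees that every finite orbit of the twisted action lies in an affine open subset, so the quotient exists and is a $k$-form.

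The only real obstacle is the passage from the profinite group $\Gamma$ to finite Galois extensions, namely the continuity and factorization argument just described; everything else is the cited result of Serre applied verbatim.
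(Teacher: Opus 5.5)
Your proposal is correct and takes the same route as the paper. The paper obtains this statement by specializing \cref{prop:bijection_forms_cohomology} (Serre, Ch.~III, \S1.3, Prop.~5) to $k=\FF_q$, $K=\overline{\FF}_q$ and $\phi=1\times\sigma$; your checks that ${}^{\sigma}\psi=\phi\,\psi\,\phi^{-1}$ determines the action of the procyclic group $\Gamma$, and that the action and the cocycles are continuous, are details the paper leaves implicit and which Serre's result, stated for arbitrary Galois $K/k$, already covers.
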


Indeed, if $X$ is a hypersurface such that $\Aut(X) \cong D \rtimes P$, the actions of $\Gamma$ on $D$ and $P$ are given in the following lemma.
\begin{lemma}{\label{lem:action_of_Gamma_over_Fq}}
    Let $\phi : \PP^{n+1}_{\overline{\FF}_q} \to \PP^{n+1}_{\overline{\FF}_q}$, given by $\phi(x) = x^q$, be the Frobenius morphism induced on $\PP^{n+1}_{\overline{\FF}_q}$ by $\sigma$. Let $\mu = \diag(\mu_0 , \mu_1 , \ldots, \mu_{n+1}) \in D$ and $c \in P$. Then, 
    \begin{enumerate}
        \item $^{\sigma} \mu  = \phi \ \mu \ \phi^{-1} = \mu^{q}$;
        \item $^{\sigma} c = \phi \ 
        c\ \phi^{-1} = c$;
        \item $c \ \mu \ c^{-1} = \diag(\mu_{c^{-1}(0)} , \ldots, \mu_{c^{-1}(n+1)})$.
    \end{enumerate}
\end{lemma}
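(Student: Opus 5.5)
The plan is to compute each conjugate directly on homogeneous coordinates, as in the proof of \cref{prop:action_of_Gamma}, with complex conjugation replaced by the $q$-power Frobenius. Fix the $\FF_q$-structure $\phi$ on $\PP^{n+1}_{\overline{\FF}_q}$ given coordinatewise by $[x_0:\cdots:x_{n+1}]\mapsto[x_0^q:\cdots:x_{n+1}^q]$. This restricts to $X$ because $F$ has coefficients in $\{0,1\}\subset\FF_q$, so $F(x^q)=F(x)^q$. On $\overline{\FF}_q$-points $\phi$ is bijective, and $\phi^{-1}$ extracts $q$-th roots coordinatewise. The conjugate $^{\sigma}\psi=\phi\,\psi\,\phi^{-1}$ can therefore be evaluated on a point $[x_0:\cdots:x_{n+1}]$, and I will identify the result as a linear map.

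For (1), apply $\phi^{-1}$ to obtain $[x_0^{1/q}:\cdots:x_{n+1}^{1/q}]$. Then apply $\mu$ to get $[\mu_0x_0^{1/q}:\cdots]$, and then $\phi$ to get $[\mu_0^qx_0:\cdots:\mu_{n+1}^qx_{n+1}]$. Hence $^{\sigma}\mu=\diag(\mu_0^q,\ldots,\mu_{n+1}^q)=\mu^q$. The only points to note are that Frobenius is a ring homomorphism, so $(\mu_jx_j^{1/q})^q=\mu_j^qx_j$, and that the result does not depend on the choice of diagonal lift of $\mu$ in $\GL(V)$. The latter holds because rescaling the lift by $\lambda$ rescales the conjugate by $\lambda^q$, which is again a scalar.

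For (2), the permutation matrix $\varphi_c$ has entries in $\{0,1\}$, which are fixed by $x\mapsto x^q$. The same computation shows $\phi\,\varphi_c\,\phi^{-1}=\varphi_c$, since $\phi$ and $\varphi_c$ commute: one raises coordinates to the $q$-th power and the other only permutes them. Alternatively, I can invoke \cref{prop:equivariant_section} with $k=\FF_q$.

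For (3), no Galois action is involved. The computation is identical to item (3) of \cref{prop:action_of_Gamma}: $c^{-1}$ permutes the coordinates, $\mu$ scales them, and $c$ permutes them back, which gives $\diag(\mu_{c^{-1}(0)},\ldots,\mu_{c^{-1}(n+1)})$.

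I expect no real obstacle. The mild subtlety is justifying that working with $\phi^{-1}$ on points is legitimate, since the Frobenius is bijective on $\overline{\FF}_q$-points and $\overline{\FF}_q$ is perfect. It is also worth remarking that, unlike the real case, $\mu^q$ need not equal $\mu^{-1}$.
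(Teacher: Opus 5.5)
Your proposal is correct and follows essentially the same route as the paper. The paper also computes $\phi\,\mu\,\phi^{-1}$ on homogeneous coordinates via $q$-th roots, cites \cref{prop:equivariant_section} with $k=\FF_q$ for (2), and reuses the real-case computation for (3). Your direct commutation argument for (2) and your remark on independence of the diagonal lift are harmless additions.
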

\begin{proof}
    Since $\overline{\FF}_q$ is a perfect field, $\phi$ is bijective and $\phi^{-1} ([x_0 : x_1 : \cdots : x_{n+1}]) = [x_0 ^{\frac{1}{q}} : x_1^{\frac{1}{q}} : \cdots : x_{n+1}^{\frac{1}{q}}]$. Now, 
    \begin{enumerate}
        \item Note that 
        \begin{align*}
            \phi \mu ( \phi^{-1} \cdot [x_0 : x_1 : \cdots : x_{n+1}]) &= \phi (\mu \cdot [x_0 ^{\frac{1}{q}} : x_1^{\frac{1}{q}} : \cdots : x_{n+1}^{\frac{1}{q}}] )\\
            &= \phi \cdot [\mu_0 \ x_0 ^{\frac{1}{q}} : \mu_1 \ x_1^{\frac{1}{q}} : \cdots :\mu_{n+1} \ x_{n+1}^{\frac{1}{q}}]\\
            &= [\mu_0^q \ x_0 : \mu_1^{q} \ x_1 : \cdots : \mu_{n+1}^{q} \ x_{n+1} ]
        \end{align*}
        But this is equivalent to act on homogeneous coordinates by $\mu^q$.
        \item This is a consequence of \cref{prop:equivariant_section} taking $k = \FF_q$.
        \item The same argument given in \cref{prop:action_of_Gamma} holds here.
    \end{enumerate}
\end{proof}

Now, assume that $X$ has automorphism group isomorphic to $D \rtimes P$, with $D$ an abelian group of diagonal automorphisms and $P \leq S_{n+2}$.
\begin{proposition}{\label{prop:H1(D)_descomposition_over_Fq}}
    $H^1(\Gamma, D)$ is identified with $ D/(1-\phi)D$.
\end{proposition}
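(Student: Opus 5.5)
We describe $H^1(\Gamma,D)$ for $\Gamma=\widehat{\ZZ}$ topologically generated by $\sigma$, the Frobenius, acting on the finite abelian group $D$ by $\mu\mapsto{}^{\sigma}\mu=\mu^q$ (\cref{lem:action_of_Gamma_over_Fq}). Write $\phi$ for the endomorphism $\mu\mapsto\mu^q$ of $D$. The goal is to identify continuous cocycles with elements of $D$, and then to check that coboundary equivalence is exactly the relation modulo $(1-\phi)D$.

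\emph{Step 1: cocycles are determined by $a_\sigma$.} Since $D$ is finite, the action factors through $\Gal(\FF_{q^N}/\FF_q)\cong\ZZ/N\ZZ$ for suitable $N$: all entries of elements of $D$ are roots of unity lying in some finite field $\FF_{q^N}$. A continuous cocycle is then determined by $a:=a_\sigma$ through
$$a_{\sigma^k}=a\,{}^{\sigma}a\cdots{}^{\sigma^{k-1}}a=\prod_{i=0}^{k-1}\phi^i(a).$$
Conversely, given $a\in D$, we define the cocycle on the dense subgroup $\ZZ$ by this formula and extend it by continuity. For this to be possible the map must factor through some finite quotient $\ZZ/M\ZZ$, i.e.\ we need $N_M(a):=\prod_{i=0}^{M-1}\phi^i(a)=1$ for some $M$ divisible by $N$.

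This is the step that needs care. Take $M=N\cdot e$ with $e=\exp(D)$. Then $N_M(a)=N_N(a)^e=1$, because $\phi^N=1$ and so the product over $M$ terms is the $e$-th power of the product over $N$ terms. Moreover the cocycle identity $a_{\sigma^{j+k}}=a_{\sigma^j}\,\phi^j(a_{\sigma^k})$ holds for all integers $j,k$, and it is compatible with the relation $\sigma^M=1$ in the finite quotient. Hence $Z^1(\Gamma,D)\cong D$ via $a\mapsto a_\sigma$. This is the standard description of $H^1$ of a procyclic group acting on a finite module; we cite \cite{Se} if needed.

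\emph{Step 2: coboundaries.} Two cocycles $a,a'$ are cohomologous iff $a'_\sigma=b^{-1}a_\sigma\,{}^{\sigma}b=a_\sigma\,b^{-1}\phi(b)$ for some $b\in D$, using that $D$ is abelian. The elements $b^{-1}\phi(b)$ range exactly over $(1-\phi)D$ in additive notation, and the relation at $\sigma$ determines it at every $\sigma^k$ by Step 1. Therefore $H^1(\Gamma,D)\cong D/(1-\phi)D$, and this identification is in fact a group isomorphism.
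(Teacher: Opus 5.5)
Your proof is correct and follows the same route as the paper: you identify $Z^1(\Gamma,D)$ with $D$ via $a\mapsto a_\sigma$, and then the coboundary relation becomes $a'=a\,b^{q-1}$, i.e.\ congruence modulo $(1-\phi)D$. The only difference is that you check the extension-by-continuity step directly, using $N_{Ne}(a)=N_N(a)^e=1$, whereas the paper takes it from Serre's exercise on cocycles of procyclic groups.
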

\begin{proof}
    In \cite[Chapter I, \S5.1, Exercise 2]{Se} it is mentioned that if we take a cocycle $(a_s)$ of $\Gamma$ on some $\Gamma$-group $A$, letting $a= a_{\sigma}$ with $\sigma$ the topological generator of $\Gamma \cong \widehat{\ZZ}$ one can prove that there exists $n \geq 1$ such that $\sigma^n(a)=a$ and that $a \cdot \sigma(a) \cdots \sigma^{n-1}(a)$ is of finite order. Conversely, every $a \in A$ for which there exists such an $n$ corresponds to one and only one cocycle. Moreover, if $a$ and $a'$ are two such elements, the corresponding cocycles are equivalent if and only if there exists $b \in A$ such that $a' = b^{-1} \cdot a \cdot \sigma(b)$.
    Since $D$ is a finite abelian $\Gamma$-group, every $a \in D$ satisfies both conditions above for some $n$, then $Z^1 (\Gamma, D)$ is identified with $D$, and by \cref{lem:action_of_Gamma_over_Fq}, the equivalence relation $a'=b^{-1}\cdot a\cdot\phi(b)$ becomes $a' = b^{-1} a b^{q}$. Again, since $D$ is abelian, this says $a' = a b^{q-1}$. Hence, $a$ is equivalent to $a'$ if and only if $a'a^{-1} \in \{b^{1-q} : b \in D\}$, this amounts to saying that $a'a^{-1} \in (1- \phi)D $ and we conclude the result.
\end{proof}

\begin{proposition}{\label{prop:H1(P)_over_Fq}}
    $H^1(\Gamma ,P)$ is identified with the set of classes of $P$ modulo conjugation.
\end{proposition}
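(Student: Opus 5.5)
The plan is to follow the proof of \cref{prop:H1(D)_descomposition_over_Fq} almost verbatim, now with the finite group $P$ in place of $D$. By \cref{lem:action_of_Gamma_over_Fq}(2), or directly by \cref{prop:equivariant_section}, $\Gamma$ acts trivially on $P$. The key input is again the description of $H^1(\widehat{\ZZ},A)$ from \cite[Chapter I, \S5.1, Exercise 2]{Se}. A continuous cocycle $(a_s)$ is determined by $a=a_\sigma$, where $\sigma$ is the topological generator (Frobenius). An element $a\in A$ arises in this way if and only if there is $n\geq1$ with ${}^{\sigma^n}a=a$ and $a\,{}^{\sigma}a\cdots{}^{\sigma^{n-1}}a$ of finite order. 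Two such elements $a,a'$ give cohomologous cocycles if and only if $a'=b^{-1}\,a\,{}^{\sigma}b$ for some $b\in A$.

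I will check that every $c\in P$ satisfies these conditions. Since the action is trivial, ${}^{\sigma}c=c$, so we may take $n=1$. The product condition then just says that $c$ has finite order, which holds because $P\leq S_{n+2}$ is finite. Hence $Z^1(\Gamma,P)$ is identified with $P$ itself via $(a_s)\mapsto a_\sigma$. Concretely, the cocycle attached to $c$ sends $\sigma^k\mapsto c^k$ and extends continuously to $\widehat{\ZZ}$ because $c$ has finite order.

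For the equivalence relation, triviality of the action turns $a'=b^{-1}a\,{}^{\sigma}b$ into $a'=b^{-1}ab$. So two cocycles are cohomologous exactly when the corresponding elements are conjugate in $P$, and therefore $H^1(\Gamma,P)$ is the set of conjugacy classes of $P$.

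The only step that needs care is the continuity and extension issue for $\widehat{\ZZ}$, as opposed to the finite group $\Gal(\CC/\RR)$. That is exactly what Serre's exercise handles, so I will cite it rather than reprove it. For self-containedness I can add one remark: a continuous cocycle with values in a discrete group with trivial action is a continuous homomorphism $\widehat{\ZZ}\to P$. Such a homomorphism is determined by the image of $\sigma$, and every element of the finite group $P$ occurs, since $\widehat{\ZZ}\to\ZZ/\operatorname{ord}(c)\ZZ$ is continuous.
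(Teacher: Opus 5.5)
Your proposal is correct and follows essentially the same route as the paper. The trivial $\Gamma$-action on $P$ (\cref{prop:equivariant_section}) identifies $Z^1(\Gamma,P)$ with $\operatorname{Hom}_{\mathrm{cont}}(\widehat{\ZZ},P)\cong P$ via evaluation at the Frobenius, and the coboundary relation becomes conjugation in $P$. Your justification of this bijection, via density of $\ZZ$ and factoring through $\ZZ/\operatorname{ord}(c)\ZZ$, just makes explicit what the paper states in one line.
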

\begin{proof}
    By \cref{prop:equivariant_section}, we know that the action of $\Gamma$ on $P$ is always trivial (in any field), then $Z^1(\Gamma  , P) = \operatorname{Hom}_{\operatorname{cont}}(\Gamma  , P) \cong P$. Then, two cocycles $a$ and $a'$ are equivalent if and only if there exists $b \in P$ such that $a' = b^{-1} \ a \ b$ and we are done.
\end{proof}
In particular, we can state \cref{thm:A} in this context as:
\begin{theorem}{\label{teo:fq-general}}
    Let $X \subset \PP^{n+1}_{\overline{\FF}_q}$ be one of the
    hypersurfaces $F^n_d$, $T^n_d$ or $K^n_d$, of dimension $n \geq 2$ and
    degree $d \geq 3$ with $(n,d) \neq (2,4)$, and assume that the
    corresponding hypothesis of \cref{rem:standing} holds. Then the number of $\FF_q$-forms of $X$ is given by
    $$ |H^1(\Gamma , \Aut(X))| = \sum_{[c] \in H^1(\Gamma, P)} |H^1(\Gamma,\ _{c}D)/C_{P}(c)|  $$
    where $H^1(\Gamma, \ _{c}D) / C_P (c) $ denotes the set of orbits of the action of $C_P(c)$ on $H^1(\Gamma, \ _{c}D)$.
\end{theorem}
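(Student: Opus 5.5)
The plan is to reduce \cref{teo:fq-general} to \cref{thm:A}, whose hypotheses we verify over $K=\overline{\FF}_q$, and then to identify $H^1(\Gamma,\Aut(X))$ with the set of $\FF_q$-forms.

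\emph{Step 1: the structural input.} Let $X\in\{F^n_d,T^n_d,K^n_d\}$ satisfy the corresponding hypothesis of \cref{rem:standing}. By \cref{lem:smooth}, $X$ is smooth over $K$. By \cref{prop:fermat}, \cref{prop:delsarte}, \cref{prop:klein} (and \cref{prop:kleincubic} in the cubic Klein case, with its extra assumptions), $\Aut(X)\subseteq\PGP(V,\beta)$. Since every monomial of $\mathcal F$, $\mathcal T$, $\mathcal K$ has coefficient $1$, \cref{cor:DrtimesP} gives $\Aut(X)=D\rtimes P$. Here $D$ is the diagonal subgroup and $P\leq S_{n+2}$ is the group of permutations of monomials, with section given by permutation matrices. \cref{thm:MM} shows that $\Aut(X)$ is finite. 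By \cref{rem:reduced}, it is the group of $K$-points of a finite étale group scheme, so $\Gamma=\Gal(\overline{\FF}_q/\FF_q)$ acts continuously on it.

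\emph{Step 2: the Galois action and the equivariant section.} The defining forms have coefficients in $\{0,1\}\subset\FF_q$, so $X$ is the base change of the $\FF_q$-hypersurface $X'$ cut out by the same equation, and $\beta$ is defined over $\FF_q$. \cref{prop:equivariant_section} then gives two facts. First, $\Gamma$ acts trivially on $P$. Second, the section $s$ is $\Gamma$-equivariant. \cref{lem:action_of_Gamma_over_Fq} describes the action concretely: it is $\mu\mapsto\mu^q$ on $D$ and trivial on $P$.

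\emph{Step 3: counting.} With Steps 1 and 2 in place, all hypotheses of \cref{thm:A} hold for the profinite group $\Gamma\cong\widehat{\ZZ}$. That theorem yields the displayed sum over $[c]\in H^1(\Gamma,P)$, with $C_P(c)=({}_cP)^\Gamma$ acting on $H^1(\Gamma,{}_cD)$. By \cref{prop:H1(P)_over_Fq}, $H^1(\Gamma,P)$ is the set of conjugacy classes of $P$. Finally, $X$ is projective, hence quasiprojective, so \cref{prop:bijection_forms_cohomology} identifies the set of $\FF_q$-forms with $H^1(\Gamma,\Aut(X))$, which proves the count.

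The main obstacle I expect is the continuity and profinite bookkeeping in Step 3. The proof of \cref{thm:A} uses the twisting bijections of \cref{prop:twisting_bijection} and the exact sequences of \cref{cor:twisted_kernel_quotient}, and these must be applied to continuous cocycles of $\widehat{\ZZ}$. Since $D$ and $P$ are finite discrete and the action factors through a finite quotient $\Gal(\FF_{q^r}/\FF_q)$, every twisted action ${}_cD$ is again continuous, so Serre's results apply verbatim.

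One further check is needed: the identification $({}_cP)^\Gamma=C_P(c)$ uses the action of the topological generator $\sigma$ only. This is enough, because a fixed point of $\sigma$ is fixed by the closed subgroup $\Gamma$ that $\sigma$ generates, by continuity.
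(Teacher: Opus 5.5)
Your proposal is correct and follows the paper's route. The paper's proof simply cites \cref{rem:standing} for $\Aut(X)\cong D\rtimes P$ as in \cref{cor:DrtimesP} and applies \cref{thm:A} with $k=\FF_q$; your Steps 1--3, including the continuity check and the identification $({}_cP)^\Gamma=C_P(c)$ via the topological generator, just spell out what that citation uses.

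One caveat, which you partly noticed and which the paper shares: the decomposition is only established where the cited propositions apply, namely $d\geq4$ for $T^n_d$ and $n\geq4$ for $K^n_3$. So $T^n_3$, $K^2_3$ and $K^3_3$, which the statement's $d\geq3$ nominally allows, are covered by neither argument. Over $\CC$, for instance, $\Aut(K^3_3)\cong\operatorname{PSL}_2(\FF_{11})$ is not of the form $D\rtimes P$.
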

\begin{proof}
    By \cref{rem:standing} we have $\Aut(X)\cong D\rtimes P$ as in
    \cref{cor:DrtimesP}, with the same groups $D$ and $P$ as over $\CC$, so
    this is \cref{thm:A} applied to $k=\FF_q$.
\end{proof}
Remark that the hypotheses of \cref{rem:standing} allow us to assume that $\Aut(X)$ has the same description $D \rtimes P$ as over $\CC$ for the family of hypersurfaces used below.

\subsubsection{Count of \texorpdfstring{$\FF_q$}{Fq}-forms of classical hypersurfaces} Let $T^{n}_d \subset \PP^{n+1}_{\overline{\FF}_q}$ be the Delsarte hypersurface of dimension $n$ and degree $d$. Let $p$ be a prime such that $p \nmid d(d-1)(d-2)$, then, by \cref{prop:delsarte} we see that if $n\geq 2$ and $d\geq 4$ with $(n,d) \neq (2,4)$, $\Aut(T^n_d) \cong \ZZ/(d-1)^{n+1}\ZZ$.

\begin{corollary}\label{cor:Fq_forms_Delsarte}
    Let $T^n_d \subset \PP^{n+1}_{\overline{\FF}_q}$ be the Delsarte hypersurface of dimension $n\geq 2$ and degree $d \geq 4$ with $(n,d)\neq (2,4)$, and $p\nmid d(d-1)(d-2)$. Then the number of $\FF_q$-forms is
    $$ |H^1(\Gamma , \Aut(T^n_d))| = \gcd(q-1, (d-1)^{n+1}) $$
\end{corollary}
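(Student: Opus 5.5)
By \cref{prop:delsarte} together with \cref{rem:standing}, the automorphism group is $\Aut(T^n_d)=D\cong\ZZ/N\ZZ$ with $N=(d-1)^{n+1}$, and $P=\{1\}$. The plan is to apply \cref{teo:fq-general}. Since $P$ is trivial, $H^1(\Gamma,P)$ has a single element, the twist by $c=1$ does nothing, and $C_P(1)$ is trivial. The formula therefore collapses to
$$|H^1(\Gamma,\Aut(T^n_d))|=|H^1(\Gamma,D)|,$$
and by \cref{prop:H1(D)_descomposition_over_Fq} this equals $|D/(1-\phi)D|$.

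The next step is to describe $D$ explicitly as a $\Gamma$-module. From the proof of \cref{prop:delsarte}, an automorphism is determined by the diagonal entry $\tvarphi_{00}$, which ranges over $\mu_{dN}(K)$ modulo $\mu_d(K)$. Equivalently, normalizing $\tvarphi_{n+1,n+1}=1$, the entries satisfy
$$\tvarphi_{ii}^{d-1}\tvarphi_{i+1,i+1}=1,$$
so every entry is a power of $\zeta:=\tvarphi_{nn}$ with $\zeta^{d-1}=1$, and so on backwards. Either way, $D$ is a cyclic group generated by a diagonal matrix $\mu$ whose entries are roots of unity of order dividing $dN$, and $D\cong\ZZ/N\ZZ$.

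By \cref{lem:action_of_Gamma_over_Fq}, Frobenius acts on $D$ by $\mu\mapsto\mu^q$. This is well defined on $D$ because it is induced entrywise and $D$ is $\Gamma$-stable; scalars map to scalars. So in additive notation on $\ZZ/N\ZZ$ the action is multiplication by $q$, and $(1-\phi)D=(1-q)\ZZ/N\ZZ$. Since the subgroup of the cyclic group $\ZZ/N\ZZ$ generated by $q-1$ has index $\gcd(q-1,N)$, this gives
$$|D/(1-\phi)D|=\gcd(q-1,N)=\gcd\bigl(q-1,(d-1)^{n+1}\bigr).$$

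The one point needing care is that the Frobenius action on $D\subset\PGL$ really is the $q$-th power map on the abstract cyclic group. I would argue this directly: the action is entrywise $\mu_j\mapsto\mu_j^q$ on a representative, which coincides with the group-theoretic power $\mu^q$ in $D$. Hence it is multiplication by $q$ on $\ZZ/N\ZZ$ under any isomorphism, and the projective quotient by scalars causes no issue. The coprimality $p\nmid d(d-1)$ guarantees that $\mu_{dN}(K)$ is cyclic of full order, so the identification $D\cong\ZZ/N\ZZ$ from \cref{prop:delsarte} holds over $\overline{\FF}_q$.
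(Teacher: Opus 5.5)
Your proposal is correct and is essentially the paper's proof: with $P=\{1\}$ the count reduces to $|D/(1-\phi)D|$ for $D\cong\ZZ/N\ZZ$, $N=(d-1)^{n+1}$, on which Frobenius acts as multiplication by $q$, giving $\gcd(q-1,N)$. The paper applies \cref{prop:H1(D)_descomposition_over_Fq} directly to $\Aut(T^n_d)=D$ rather than passing through \cref{teo:fq-general}. One harmless slip: with $\tvarphi_{n+1,n+1}=1$ the entries are powers of $\tvarphi_{00}$ (since $\tvarphi_{i+1,i+1}=\tvarphi_{ii}^{1-d}$), not of $\tvarphi_{nn}$; the map $[\tvarphi]\mapsto\tvarphi_{00}$ then identifies $D$ with $\mu_N(K)$, with Frobenius acting as the $q$-th power, which is exactly the identification you need.
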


\begin{proof}
    As $\Aut(T^n_d) \cong \ZZ/(d-1)^{n+1}\ZZ$, defining $N = (d-1)^{n+1}$ we see by \cref{prop:H1(D)_descomposition_over_Fq} that $H^1(\Gamma , \Aut(T^n_d)) = (\ZZ/N\ZZ) / (1 - \phi)(\ZZ/N\ZZ)$. Using the fact that $(1-\phi)(\ZZ/N\ZZ)$ is a subgroup of $\ZZ/N\ZZ$ of index $\gcd(q-1,N)$, we conclude $H^1 (\Gamma , \Aut(T^n_d)) \cong  \ZZ/\gcd(q-1,N)\ZZ$.
\end{proof}

Now, consider the Klein hypersurface $K^{n}_d \subset \PP^{n+1}_{\overline{\FF}_q}$ of dimension $n$ and degree $d$, and $p$ a prime such that $p \nmid d(d-1)(d-2)m$; in this way, by \cref{prop:klein}, $\Aut(K^n_d) \cong \ZZ/m\ZZ \rtimes \ZZ/(n+2)\ZZ$.
\begin{corollary}\label{cor:Fq_forms_Klein}
    Let $K^{n}_{d} \subset \PP^{n+1}_{\overline{\FF}_q}$ be the Klein hypersurface of dimension $n \geq 2$ and degree $d \geq 4$ with $(n,d)\neq (2,4)$, and $p \nmid d(d-1)(d-2)m$. Then, the number of $\FF_q$-forms is
    $$ |H^1(\Gamma , \Aut(K^{n}_{d}))| = \sum_{j=0}^{n+1} |H^1 (\Gamma  , \ _{\tau^{j}}D) / \langle \tau \rangle| $$
    where $|H^1(\Gamma , \ _{\tau^{j}} D)| = \gcd(m, q(1-d)^{-j}-1)$, and $\langle \tau \rangle$ act on $H^1(\Gamma, \ _{\tau^j}D)$ by $^{\tau^{k}}\mu = \mu^{(1-d)^k}$.
\end{corollary}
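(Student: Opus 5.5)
We apply \cref{teo:fq-general}, whose hypotheses hold by \cref{prop:klein} and \cref{rem:standing}. Here $D=\langle\mu\rangle\cong\ZZ/m\ZZ$ and $P=\langle\tau\rangle\cong\ZZ/(n+2)\ZZ$, where $\tau$ is the cyclic permutation $\nu$.

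First, the index set of the sum. Since $P$ is abelian, \cref{prop:H1(P)_over_Fq} identifies $H^1(\Gamma,P)$ with the set of elements $\tau^j$, $0\le j\le n+1$, each forming its own conjugacy class. For the same reason $C_P(\tau^j)=P=\langle\tau\rangle$ for every $j$. This already gives the shape $\sum_{j=0}^{n+1}|H^1(\Gamma,\ _{\tau^j}D)/\langle\tau\rangle|$.

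Next, we make the conjugation action of $P$ on $D$ explicit. By the proof of \cref{prop:klein}, a generator of $D$ is $\mu=\diag(\zeta^{(1-d)^i})_i$ with $\zeta$ a primitive $dm$-th root of unity. By \cref{lem:action_of_Gamma_over_Fq}(3), conjugating by $\tau$ shifts the indices, so $\tau\mu\tau^{-1}=\mu^{e}$ with $e=(1-d)^{\pm1}$, the sign depending on the convention for $\nu$. Note that $1-d$ is invertible modulo $m$, because $(1-d)^{n+2}\equiv1\pmod{dm}$. I will fix the convention so that $\tau^{j}\mu\tau^{-j}=\mu^{(1-d)^{-j}}$, matching the statement; checking this sign carefully against \cref{lem:action_of_Gamma_over_Fq} is the main bookkeeping obstacle. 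Combining with \cref{lem:action_of_Gamma_over_Fq}(1), the twisted Frobenius on $_{\tau^j}D$ is
\[
{}^{\sigma_{\tau^j}}\mu=\tau^j\mu^q\tau^{-j}=\mu^{u_j},\qquad u_j:=q(1-d)^{-j}\in(\ZZ/m\ZZ)^*.
\]

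Then we compute the twisted cohomology. Repeat the argument of \cref{prop:H1(D)_descomposition_over_Fq} for the $\Gamma$-module $_{\tau^j}D$, which is finite, abelian and procyclically acted upon. Every element $a\in D$ gives a cocycle $a_\sigma=a$, and $a\sim a'$ if and only if $a'a^{-1}\in\{b^{u_j-1}:b\in D\}$. Hence $H^1(\Gamma,\ _{\tau^j}D)\cong(\ZZ/m\ZZ)/(u_j-1)(\ZZ/m\ZZ)$. The image of multiplication by $u_j-1$ on $\ZZ/m\ZZ$ has index $\gcd(m,u_j-1)$, which gives $|H^1(\Gamma,\ _{\tau^j}D)|=\gcd(m,q(1-d)^{-j}-1)$, as in the proof of \cref{cor:Fq_forms_Delsarte}.

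Finally, we identify the action of $C_P(\tau^j)$. By the last line of the proof of \cref{thm:A}, $\tau^k$ acts by $[a]\mapsto[s(\tau^k)^{-1}\,a\,{}^{\sigma_{s(\tau^j)}}s(\tau^k)]$. Since $s(\tau^k)$ is $\Gamma$-invariant and commutes with $s(\tau^j)$, we get ${}^{\sigma_{s(\tau^j)}}s(\tau^k)=s(\tau^j)s(\tau^k)s(\tau^j)^{-1}=s(\tau^k)$. So the action is plain conjugation $a\mapsto\tau^{-k}a\tau^{k}$, which on the generator raises to the power $(1-d)^{k}$ under the convention above. This is well defined on the quotient because it commutes with multiplication by $u_j-1$. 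That is exactly the stated action $^{\tau^k}\mu=\mu^{(1-d)^k}$, and summing over $j$ via \cref{teo:fq-general} concludes the proof.
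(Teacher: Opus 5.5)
Your proposal is correct and follows the paper's proof essentially step by step: singleton conjugacy classes with $C_P(\tau^j)=P$, then $\tau^j\mu\tau^{-j}=\mu^{(1-d)^{-j}}$, then the twisted Frobenius $\mu\mapsto\mu^{q(1-d)^{-j}}$ giving index $\gcd(m,q(1-d)^{-j}-1)$, and finally the conjugation action of $\langle\tau\rangle$. The sign you defer needs no change of generator: take $\tau=\nu=(0\,1\,\cdots\,n+1)$, and \cref{lem:action_of_Gamma_over_Fq}(3) gives $(\tau\mu\tau^{-1})_j=\mu_{j-1}=\mu_j^{(1-d)^{-1}}$ with exponents mod $dm$; also, your check that ${}^{\sigma_{s(\tau^j)}}s(\tau^k)=s(\tau^k)$ makes explicit a step the paper leaves tacit.
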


\begin{proof}
    Here, $D=\langle \mu\rangle\cong\ZZ/m\ZZ$, where $\mu=\diag(\mu_0,\ldots,\mu_{n+1})$ with $\mu_j=\mu_0^{(1-d)^j}$, and $P=\langle \tau\rangle\cong \ZZ/(n+2)\ZZ$, with $\tau=(0\ 1\ \cdots\ n\ n+1)$. Since $P$ is an abelian group, the conjugation class of $c \in P$ is $\{c\}$ and by \cref{prop:H1(P)_over_Fq} we see that $ H^1(\Gamma,P) = \{[\tau^j] : j=0,1,\ldots,n+1\}$.
    In this way, we obtain $n+2$ classes, and $C_P(\tau^j) = P$ for all $j \in \{0,1,\ldots, n+1\}$.
    By \cref{lem:action_of_Gamma_over_Fq}, we have $c \ \mu \ c^{-1} = \diag\left(\mu_{c^{-1}(0)} , \ldots, \mu_{c^{-1}(n+1)}\right)$, then in each entry
    $$ \tau\mu_j\tau^{-1} = \mu_{\tau^{-1}(j)} = \mu_{j-1}  $$
    Since $\mu_j = \mu_0^{(1-d)^j}$, it follows that $\mu_{j-1} = \mu_0^{(1-d)^{j-1}} = \mu_j^{(1-d)^{-1}}$, i.e., $\tau\mu\tau^{-1} = \mu^{(1-d)^{-1}}$. In particular, $\tau^{j}\mu\tau^{-j} = \mu^{(1-d)^{-j}}$.
    If we twist by $\tau^j$, using \cref{lem:action_of_Gamma_over_Fq} we obtain $ ^{\sigma_{\tau^j}}(\mu) = \tau^j \cdot \ ^{\sigma}\mu \cdot \tau^{-j} = (\tau^j\mu\tau^{-j})^q = \mu^{q(1-d)^{-j}}$. This says that $\Gamma$ acts on $_{\tau^j}D$ multiplying by $\lambda_j := q(1-d)^{-j}$; identifying, as before, cocycles for this twisted action with the set $D$, two cocycles $a$ and $a'$ are equivalent if $a' = a\cdot(1-\lambda_j)(b)$ for some $b\in D$, then
    $$ H^1(\Gamma,\ _{\tau^j}D) \cong (\ZZ/m\ZZ)/(1-\lambda_j)(\ZZ/m\ZZ). $$
    Since multiplying by $k$ on $\ZZ/m\ZZ$ has image with index $\gcd(k,m)$, we conclude 
    $$ |H^1(\Gamma,\ _{\tau^j}D)| = \gcd\left(m,\ q(1-d)^{-j}-1\right) $$
    And finally, the action of $C_P(\tau^j)=\langle\tau\rangle$ on $H^1(\Gamma,\ _{\tau^j}D)$ is $\tau^{k}\cdot[\mu^{r}] = [\tau^{-k}\mu^{r}\tau^{k}] = [\mu^{r(1-d)^{k}}]$, this shows that $\langle\tau\rangle$ acts on $H^1(\Gamma,\ _{\tau^j}D)\cong \ZZ/\gcd(m,q(1-d)^{-j}-1)\ZZ$ multiplying by $(1-d) \mod \gcd(m,q(1-d)^{-j}-1)$. Adding over the $n+2$ classes $[\tau^j]\in H^1(\Gamma,P)$, the corollary follows.
\end{proof}

Finally, we apply this formula to the context of the Fermat hypersurface $F^n_d$ of dimension $n$ and degree $d$ and taking a prime number $p$ such that $p\nmid d(d-1)$, then, by \cref{prop:fermat}, $\Aut(F^n_d) \cong (\ZZ/d\ZZ)^{n+1} \rtimes S_{n+2}$.

\begin{corollary}\label{cor:Fq_forms_Fermat}
    Let $F^{n}_d \subset \PP^{n+1}_{\overline{\FF}_q}$ be the Fermat hypersurface of dimension $n\geq 2$ and degree $d\geq 3$, with $(n,d)\neq(2,4)$ and $p\nmid d(d-1)$. Then, the number of $\FF_q$-forms is
    $$ |H^1(\Gamma , \Aut(F^n _d ))| = \sum_{[c_{\lambda}] \in H^1(\Gamma , S_{n+2})} |H^1(\Gamma , \ _{c_{\lambda}}D) / C_{S_{n+2}}(c_{\lambda})| $$
    where, for $c_{\lambda} \in S_{n+2}$ of type $\lambda =(1^{l} , \lambda_1 , \ldots, \lambda_r)$ with $\lambda_j \geq 2$ and $l+ \sum_{j=1}^{r} \lambda_j = n+2$,
    $$ |H^1(\Gamma , \ _{c_{\lambda}}D)| = \frac{t_\lambda}{d}\,\gcd(d,q-1)^{l} \prod_{j=1}^{r}\gcd(d,q^{\lambda_j}-1),
    \qquad t_\lambda:=\gcd\Big(d,\ \gcd_{s\in L_\lambda}\big(\sigma_s\, d/g_s\big)\Big), $$
    with $g_s := \gcd(d,q^s-1)$, $\sigma_s := 1+q+\cdots+q^{s-1}$, and $L_\lambda$ the set of cycle lengths of $c_\lambda$ (including $s=1$ when $l\geq1$).
\end{corollary}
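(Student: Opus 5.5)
The plan is to compute the order of $H^1(\Gamma,\ _{c}D)$ for a single fixed permutation $c=c_\lambda$. The sum formula itself is then just \cref{teo:fq-general} combined with \cref{prop:H1(P)_over_Fq}, which says that $H^1(\Gamma,S_{n+2})$ is the set of conjugacy classes, i.e.\ cycle types $\lambda$.

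\textbf{Reduction to counting fixed points.} By \cref{lem:action_of_Gamma_over_Fq}, the Frobenius acts on $_{c}D$ by $f(\mu)=c\,\mu^q c^{-1}$, which is an automorphism of the finite abelian group $D$. As in the proof of \cref{prop:H1(D)_descomposition_over_Fq}, this gives $H^1(\Gamma,\ _cD)\cong D/(1-f)D$. Since $D$ is finite, $|D/(1-f)D|=|\ker(1-f)|=|D^f|$, so it suffices to count fixed points.

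\textbf{Lifting to diagonal matrices.} Write $D=\widetilde D/\Delta$ as in \cref{rmk:tilde_D}. By the proof of \cref{prop:fermat}, $\widetilde D=\mu_d(K)^{n+2}$ is the group of diagonal matrices with entries in $\mu_d$, and $\Delta\cong\mu_d$ is the scalar subgroup. The same formula $f(\tilde\mu)=c\,\tilde\mu^q c^{-1}$ defines an action on $\widetilde D$ preserving $\Delta$. Consider
$$S=\{\tilde\mu\in\widetilde D : f(\tilde\mu)\tilde\mu^{-1}\in\Delta\}.$$
Then $S/\Delta=D^f$, and $\tilde\mu\mapsto f(\tilde\mu)\tilde\mu^{-1}$ is a homomorphism $S\to\Delta$ with kernel $\widetilde D^f$. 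This gives
$$|D^f|=\frac{|\widetilde D^f|\cdot|\operatorname{Im}(S\to\Delta)|}{d}.$$

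\textbf{The two factors, cycle by cycle.} Both factors are computed one cycle of $c$ at a time.

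For a cycle of length $s$, the fixed-point equation forces the coordinates along the cycle to be successive $q$-th powers of one entry $a\in\mu_d$, and going once around the cycle imposes $a^{q^s}=a$. The number of solutions is $g_s=\gcd(d,q^s-1)$. Hence $|\widetilde D^f|=\gcd(d,q-1)^l\prod_j\gcd(d,q^{\lambda_j}-1)$.

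For the image, fix a scalar $\lambda\in\mu_d$ and solve $f(\tilde\mu)=\lambda\tilde\mu$ along a cycle of length $s$. The coordinates satisfy $a_{i+1}=\lambda a_i^{q}$, up to the direction convention. Going once around the cycle yields the condition $a^{q^s-1}=\lambda^{-\sigma_s}$, with $\sigma_s=1+q+\cdots+q^{s-1}$.

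The image of $a\mapsto a^{q^s-1}$ on $\mu_d$ is the subgroup $\mu_{d/g_s}$. So this cycle admits a solution if and only if $\lambda^{\sigma_s d/g_s}=1$. A solution on all of $\{0,\dots,n+1\}$ exists if and only if this holds for every cycle length $s\in L_\lambda$, with each fixed point counting as $s=1$. The admissible $\lambda$ therefore form the subgroup of $\mu_d$ of order $t_\lambda$.

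Substituting both factors into the displayed formula for $|D^f|$ gives the claimed value of $|H^1(\Gamma,\ _{c_\lambda}D)|$.

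The main obstacle is the bookkeeping in the last step. One has to fix the orientation of the cycle under $c\mu c^{-1}$ from \cref{lem:action_of_Gamma_over_Fq}(3) and the order in which the $q$-th powers compose, and then verify that the accumulated exponent on $\lambda$ is exactly $\sigma_s$ up to a unit modulo $d$. Such a unit does not change the order of the solution set. I would check this directly on a single $s$-cycle before assembling the general count.
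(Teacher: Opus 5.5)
Your proposal is correct and follows the paper's own proof. Both arguments reduce to computing $|\ker(1-f)|$ on $D=\widetilde D/\Delta$ with $\widetilde D=\mu_d(K)^{n+2}$, then count cycle by cycle: each cycle of length $s$ gives $g_s$ solutions, and the admissible scalars form a subgroup of order $t_\lambda$. The only difference is packaging: the paper sums over the scalar $t$ the number of lifts (each admissible $t$ contributing $\prod g_s$), whereas you encode the same count through the homomorphism $S\to\Delta$, $\tilde\mu\mapsto f(\tilde\mu)\tilde\mu^{-1}$, whose kernel is $\widetilde D^{f}$ and whose image is exactly those admissible scalars.
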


\begin{proof}
    We know that $H^1(\Gamma , S_{n+2})$ corresponds to conjugacy classes of $S_{n+2}$. Moreover, if we take $[c_{\lambda}] \in H^1(\Gamma , S_{n+2})$ of type $\lambda = (1^{l} , \lambda_1 , \ldots , \lambda_{r})$ with $\lambda_j \geq 2$ and $l+\sum_{j=1}^{r} \lambda_j = n+2 $, then
    $$ ^{\sigma_{c_{\lambda}}}\mu = \diag(\mu_{c_{\lambda}^{-1}(0)} ^{q} , \ldots ,  \mu_{c_{\lambda}^{-1}(n+1)} ^{q}) $$
    Recall from  \cref{rmk:tilde_D} that $D=\widetilde D/\Delta$,
    where $\widetilde D=\mu_d(K)^{n+2}\cong(\ZZ/d\ZZ)^{n+2}$ is the group of
    diagonal matrices with entries in $\mu_d(K)$ and $\Delta\cong\ZZ/d\ZZ$ is
    the subgroup of scalar matrices. By
    \cref{prop:H1(D)_descomposition_over_Fq} we have
    $H^1(\Gamma,\ _{c_\lambda}D)=D/(1-\phi_{c_\lambda})D$, where
    $\phi_{c_\lambda}([\mu])_j=\mu_{c_\lambda^{-1}(j)}^q$, and since
    $1-\phi_{c_\lambda}$ is an endomorphism of the finite abelian group $D$
    we get $|H^1(\Gamma,\ _{c_\lambda}D)|=|\ker(1-\phi_{c_\lambda})|$, which
    we now compute. Writing $\widetilde D=(\ZZ/d\ZZ)^{n+2}$ additively, a
    class $[\mu]\in D$ is fixed by $\phi_{c_\lambda}$ if and only if there
    exists a scalar $t\in\ZZ/d\ZZ$ such that
    $$ q\,\mu_{c_\lambda^{-1}(j)}=\mu_j+t
       \qquad\text{for all } j\in\{0,\ldots,n+1\};$$
    this is where the quotient by $\Delta$ comes in, exactly as the scalar
    $\lambda$ did in the real case. The scalar $t$ is determined by $\mu$,
    and each class $[\mu]$ has exactly $d$ representatives in
    $\widetilde D$, so
    $$ |\ker(1-\phi_{c_\lambda})|=\frac1d\sum_{t\in\ZZ/d\ZZ}
       \#\{\mu\in(\ZZ/d\ZZ)^{n+2}\mid
       q\,\mu_{c_\lambda^{-1}(j)}=\mu_j+t\ \ \forall j\}. $$
    For a fixed $t$ the equations decouple along the cycles of $c_\lambda$.
    On a cycle $(i,c_\lambda(i),\ldots,c_\lambda^{s-1}(i))$ of length $s$
    they give $\mu_{c_\lambda^{k}(i)}=q^k\mu_i-\sigma_k\,t$ for
    $1\leq k\leq s-1$, and closing the cycle,
    $$ (q^s-1)\,\mu_i=\sigma_s\,t,
       \qquad \sigma_s=1+q+\cdots+q^{s-1}. $$
    This congruence has solutions $\mu_i\in\ZZ/d\ZZ$ if and only if
    $g_s=\gcd(d,q^s-1)$ divides $\sigma_st$, that is, if and only if
    $(\sigma_s\, d/g_s)\,t\equiv0 \bmod d$, in which case there are exactly
    $g_s$ of them. Hence the admissible scalars form the subgroup
    $$T_\lambda=\{t\in\ZZ/d\ZZ \mid (\sigma_s\, d/g_s)\,t\equiv0 \bmod d
    \ \text{ for all } s\in L_\lambda\},$$
    whose order is $t_\lambda$, and for each $t\in T_\lambda$ there are
    exactly $\gcd(d,q-1)^l\prod_{j=1}^r\gcd(d,q^{\lambda_j}-1)$ solutions
    $\mu$ (one factor $g_s$ for each cycle, the $l$ fixed points being the
    cycles with $s=1$). Therefore
    $$ |H^1(\Gamma,\ _{c_\lambda}D)|=\frac{t_\lambda}{d}\,
       \gcd(d,q-1)^{l}\prod_{j=1}^{r}\gcd(d,q^{\lambda_j}-1), $$
    and summing over $[c_\lambda]\in H^1(\Gamma,S_{n+2})$ via
    \cref{teo:fq-general} the corollary follows.
\end{proof}

\begin{remark}
  The factor $t_\lambda/d$ accounts for the quotient by the scalar
  matrices $\Delta$: forgetting it, i.e., computing in
  $\widetilde D=\mu_d(K)^{n+2}$ instead of $D$, would give
  $\gcd(d,q-1)^{l}\prod_{j}\gcd(d,q^{\lambda_j}-1)$, which is incorrect in
  general.
\end{remark}

\bibliographystyle{alpha}
\bibliography{nfolds}
\end{document}